\documentclass{amsart}

\usepackage{amsmath,amssymb,amsthm}
\usepackage{mathrsfs}
\usepackage[margin=1.25in]{geometry}
\usepackage{graphicx}
\usepackage{subfigure}
\usepackage{float}
\usepackage{xcolor}
\usepackage{hyperref}

\newcommand{\bx}{\boldsymbol{x}}
\newcommand{\bR}{\mathbb{R}}
\newcommand{\bu}{\mathbf{u}}
\newcommand{\bv}{\mathbf{v}}
\newcommand{\bw}{\mathbf{w}}
\newcommand{\bxit}{\mathbf{x}}
\newcommand{\bz}{\mathbf{z}}
\newcommand{\bbM}{\mathbb{M}}
\newcommand{\bbS}{\mathbb{S}}
\newcommand{\bbV}{\mathbb{V}}
\newcommand{\bE}{\mathbf{E}}
\newcommand{\bg}{\mathbf{g}}

\newcommand{\bn}{\mathbf{n}}
\newcommand{\bdelta}{\boldsymbol{\delta}}
\newcommand{\bxi}{\boldsymbol{\xi}}

\newcommand{\calH}{\mathcal{H}}
\newcommand{\calG}{\mathcal{G}}

\newcommand{\bp}{\mathbf{p}}
\newcommand{\bd}{\mathbf{d}}
\newcommand{\bbG}{\mathbb{G}}
\newcommand{\bbI}{\mathbb{I}}
\newcommand{\calM}{\mathcal{M}}
\newcommand{\calP}{\mathcal{P}}
\newcommand{\calL}{\mathcal{L}}
\newcommand{\calR}{\mathcal{R}}
\newcommand{\calT}{\mathcal{T}}
\newcommand{\norm}[1]{\left\| #1\right\|}
\DeclareMathOperator{\diag}{diag}

\numberwithin{equation}{section}
\newtheorem{theorem}{Theorem}[section]
\newtheorem{lemma}[theorem]{Lemma}
\newtheorem{proposition}[theorem]{Proposition}

\newtheorem{assumption}[theorem]{Assumption}

\theoremstyle{remark}
\newtheorem{remark}[theorem]{Remark}
\newtheorem{example}[theorem]{Example}

\title[Global convergence of a splitting method for the GP ground state]{Global convergence of an efficient splitting method for the defocusing Gross--Pitaevskii ground state problem}

\author{Jiaxing Li}
\author{Shixin Zheng}
\author{Xiangxiong Zhang}

\thanks{J. Li and X. Zhang: Department of Mathematics, Purdue University, 150 N. University Street, West Lafayette, IN 47907 (\texttt{li4944@purdue.edu}, \texttt{zhan1966@purdue.edu}).}
\thanks{S. Zheng: Department of Mathematics, University of Maryland, College Park, MD 20742 (\texttt{siuzheng@umd.edu}).}
\thanks{XZ was supported in part by NSF DMS-2208518.}

\date{\today}

\newif\ifnnnfigs
\nnnfigstrue

\begin{document}

\begin{abstract}
For computing the ground state of the defocusing Gross--Pitaevskii
energy, we propose and analyze two efficient schemes based on the
Davis--Yin three-operator splitting, which treats the potential and interaction terms
explicitly, and the kinetic energy by a resolvent. One iteration
costs one inversion of $\bbI-\gamma\Delta$ for the first scheme,
and of $\bbI-\gamma\Delta+\gamma V_1$ for the second, with $V_1$
denoting the separable part of the potential, and on structured
meshes both operators can be inverted by simple fast GPU solvers. For
monotone discrete Laplacians, including the second-order finite
difference scheme and the lumped linear finite element method on
simplicial meshes with suitable angle conditions, we prove global
convergence to the unique positive discrete ground state, for every
positive normalized initial vector, for any constant step size
below an explicit threshold. In contrast, the methods previously
proven to converge globally to the ground state all invert a more
difficult elliptic operator that depends on the iteration variable. In three-dimensional tests with up to $999^3$ unknowns
on one GPU, a simple variable step size rule makes the proposed splitting
schemes efficient in practice, comparable in wall-clock time to
Riemannian conjugate gradient methods that also invert only a
shifted Laplacian operator, and much more robust with respect to
the choice of the initial guess.
\end{abstract}

\maketitle

{\footnotesize
\noindent\emph{Keywords}: Gross--Pitaevskii equation, ground state,
Davis--Yin splitting, global convergence
\smallskip

\noindent\emph{2020 Mathematics Subject Classification}: Primary
65N25, 65N12, 65N30, 49R05; Secondary 65K99, 49Q99, 90-08, 90C26.
\par}

\section{Introduction}
\label{sec:intro}

\subsection{The Gross--Pitaevskii ground state problem}
\label{sec:gp}

A standard  model for the equilibrium states of a
Bose--Einstein condensate
\cite{pitaevskii2003bose}
is the minimization of the Gross--Pitaevskii (GP) energy. With a standard
rescaling (see, e.g., \cite{lieb2001bosons,bao2013mathematical}) the
GP ground state problem on a bounded domain
$\Omega\subset\bR^d$, for instance $\Omega=[-L,L]^d$, can be stated
as minimizing the energy functional
\begin{equation}
E(\phi)=\frac12\int_\Omega\left(|\nabla\phi(\bx)|^2
+V(\bx)|\phi(\bx)|^2\right)\mathrm{d}\bx
+\frac{\beta}{4}\int_\Omega|\phi(\bx)|^4\,\mathrm{d}\bx,
\label{GPenergy}
\end{equation}
over the constraint set
\begin{equation}
\left\{\phi\in H_0^1(\Omega):\ \int_\Omega|\phi(\bx)|^2\,
\mathrm{d}\bx=1\right\},
\label{GPconstraint}
\end{equation}
where $d=1,2,3$ is the dimension, $V(\bx)$ is  a given potential, and
$\beta$ is the coupling constant. We only consider the case $\beta>0$, which is the
\emph{defocusing} (repulsive) regime. 

The existence, uniqueness, and regularity of the GP ground state for $\beta>0$ are
well understood, see, e.g., \cite{lieb2001bosons}. For $\beta>0$ the
minimizer of \eqref{GPenergy}--\eqref{GPconstraint} is unique up to
sign, and its positive representative $u^*>0$, called \emph{the}
ground state, is an eigenfunction of the nonlinear eigenvalue
problem
\begin{equation}
-\Delta u(\bx)+V(\bx)u(\bx)+\beta|u(\bx)|^2u(\bx)=\lambda u(\bx),
\qquad
\int_\Omega|u(\bx)|^2\,\mathrm{d}\bx=1,\qquad
u(\bx)|_{\partial\Omega}=0.
\label{continuum}
\end{equation}
Since a constant shift of the potential changes
the energy \eqref{GPenergy} only by a constant over the constraint set
\eqref{GPconstraint}, the assumption $V\ge0$ loses no generality.
Here \eqref{continuum} is understood in the sense of distributions,
i.e., in the variational form: seek $\lambda\in\bR$ and
$u\in H_0^1(\Omega)$ satisfying
\begin{equation}
(\nabla u,\nabla v)+(Vu,v)+\beta(|u|^2u,v)=\lambda\,(u,v),
\qquad\forall\,v\in H_0^1(\Omega),
\label{variational}
\end{equation}
where $(u,v)=\int_\Omega u(\bx)v(\bx)\,\mathrm{d}\bx$. Taking
$u=v=u^*$ in \eqref{variational} shows that the ground state
eigenvalue is determined by the ground state through
\begin{equation}
\lambda^*=2E(u^*)+\frac\beta2\int_\Omega|u^*(\bx)|^4\,\mathrm{d}\bx .
\label{lambdastar}
\end{equation}

\subsection{Numerical methods and their convergence theory}
\label{sec:relatedwork}

After a spatial discretization,  the ground state can be solved by
minimizing the discrete GP energy over the convex sphere constraint in $\bR^N$. We refer to the survey
\cite{bao2013mathematical} for models and methods, and to the recent
review  \cite{henning2025gross} for a
systematic account of the convergence theory, whose terminology we
follow. \emph{Algebraic methods} discretize first and then iterate
on the finite-dimensional problem: the self-consistent field (SCF)
iteration
\cite{defranceschi2000scf,cances2000convergence,upadhyaya2018density},
the inverse iteration or $\mathscr A$-method
\cite{jarlebring2014inverse}, and the $J$-method
\cite{altmann2021j}, which linearizes with the Jacobian of the
nonlinear eigenvalue problem and admits spectral shifts.
\emph{Variational methods} discretize an iteration derived in
function space: the gradient flow with discrete normalization (GFDN) \cite{bao2004computing}, a semi-implicit backward
Euler step for the projected $L^2$-gradient flow and the Sobolev gradient flows obtained by
representing the gradient in other metrics, namely the $H^1$- and
$a_0$-flows
\cite{kazemi2010minimizing,danaila2010new,danaila2017computation,chen2023convergence}
and the energy-adaptive $a_u$-flow
\cite{henning2020sobolev,altmann2022energy}.
Riemannian optimization with conjugate gradient acceleration
\cite{danaila2017computation,antoine2017efficient} and Newton-type
methods \cite{wu2017regularized,altmann2024riemannian} are faster
in practice, but the conjugate gradient variants come without a
convergence theory \cite[\S5.3]{henning2025gross}, apart
from the discussion in \cite[\S5.4]{antoine2017efficient}, which sketches an
argument for convergence to a critical point and gives heuristics for
the rate,
and among the Newton-type methods the available
guarantee is the convergence of the residual to zero for
the regularized Newton method \cite{wu2017regularized}, by a
trust-region argument, which yields a stationary point rather than
the ground state.

Convergence of these methods can be summarized into three levels.  

\emph{(i) Local convergence to the ground state.} Many methods have
been proven to converge locally at a linear rate governed by a
spectral gap of a linearized operator, such as SCF
\cite{upadhyaya2018density}, the $\mathscr A$-method
\cite{jarlebring2014inverse}, the $J$-method \cite{altmann2021j},
and the Sobolev gradient flows
\cite{zhang2019exponential,henning2020sobolev,henning2023dependency,chen2023convergence,chen2024fully}.

\emph{(ii) Global convergence to a critical point.} From any initial
guess, a monotone energy decay combined with a summability argument
shows that every accumulation point is a constrained critical point
of the energy, as shown for the $H^1$- and $a_0$-flows
\cite{chen2023convergence,chen2024fully}. For the rotating energy
functional and for two-component condensates, energy dissipation
and global convergence to a critical point were established in
\cite{feng2025preconditioned,zhang2025convergence,feng2025two}.

\emph{(iii) Global convergence to the ground state.} The few results
at this level all rest on positivity as a selection mechanism.
Henning and Peterseim \cite{henning2020sobolev} proved that the
$a_u$-flow started from a nonnegative initial guess converges in
$H^1$ to a strictly positive eigenfunction, necessarily the ground
state, since the Gross--Pitaevskii energy admits no positive excited
state. The same conclusion was reached for the damped $J$-method
\cite[Prop.~3]  {altmann2021j}. For the classical GFDN even the energy-diminishing property was a
long-standing open problem \cite[\S5.2.1]{henning2025gross}. It was
settled recently by Feng, Tang and Wang \cite{feng2025discrete},
together with global convergence to a critical point, and in the
defocusing case without rotation the whole sequence started from a
nonnegative initial value converges in $H^1$ to the ground state,
by the positivity preservation of the semi-implicit step
\cite[Cor.~1]{feng2025discrete}. For the continuous-in-time
$L^2$-normalized gradient flow, which is the model behind the GFDN
rather than the algorithm itself, convergence to the ground state
from a nonnegative initial value was proven recently in
\cite{chu2025gradient}. These arguments use the uniqueness and
positivity of the \emph{continuous} ground state, properties that a
spatial discretization does not automatically inherit. They are,
however, available for the monotone discretizations of
Section~\ref{sec:fem}. For the lumped $P^1$ and finite difference
schemes, the discrete ground state is unique up to sign, strictly
positive, and the only nonnegative critical point of the discrete
energy on the sphere, by the M-matrix structure combined with
either the Perron--Frobenius theorem \cite{chen2024fully} or a
discrete Picone inequality \cite{hauck2024positivity}. With these
properties, the fully discrete $a_u$-flow for the lumped $P^1$
scheme was proven to converge to the discrete ground state for
every nonnegative normalized initial guess
\cite[Cor.~4.2]{hauck2024positivity}.

\subsection{The scheme and its computational cost}
\label{sec:scheme}

The three methods that reach level (iii), the $a_u$-flow, the
damped $J$-method and the GFDN, all invert an
elliptic operator that depends on the iteration variable, and no method whose iteration inverts
only a shifted Laplacian has been proven
to converge globally to the ground state, e.g., it is quite
difficult to prove that the $H^1$- or $a_0$-scheme can preserve
positivity \cite[\S3.1]{chen2023convergence}. Motivated by
this gap, we propose and analyze a semi-implicit splitting method for the
discrete ground state problem \eqref{gs-h} and prove its global
convergence to the unique positive discrete ground state. By applying the Davis--Yin three-operator splitting
\cite{davis2017three} to natural three-part decompositions of
\eqref{gs-h}, and by derivation in Appendix~\ref{app:dys}, with
notation in Section~\ref{sec:fem}, we obtain two schemes:
\begin{equation}
\begin{cases}
\bu^k=\dfrac{\bz^k}{\norm{\bz^k}_2},\\[8pt]
\bxit^k=(\bbI-\gamma\Delta_h)^{-1}
\bigl(2\bu^k-\bz^k-\gamma(\bbV\bu^k+\beta(\bu^k)^3)\bigr),\\[6pt]
\bz^{k+1}=\bz^k+\bxit^k-\bu^k ,
\end{cases}
\qquad\text{(DYS~I)}
\label{dys}
\end{equation}
\begin{equation}
\begin{cases}
\bu^k=\dfrac{\bz^k}{\norm{\bz^k}_2},\\[8pt]
\bxit^k=(\bbI-\gamma\Delta_h+\gamma\bbV_1)^{-1}
\bigl(2\bu^k-\bz^k-\gamma(\bbV_2\bu^k+\beta(\bu^k)^3)\bigr),\\[6pt]
\bz^{k+1}=\bz^k+\bxit^k-\bu^k ,
\end{cases}
\qquad\text{(DYS~II)}
\label{dys2}
\end{equation}
where $\gamma>0$ is the step size, $V=V_1+V_2$ with $V_1\ge0$ and
$V_2\ge0$, $\bbV_1$ and $\bbV_2$ are the corresponding diagonal
matrices, and $\Delta_h$ denotes the discrete Laplacian.

We refer to the first scheme  \eqref{dys} as DYS~I in which only a shifted Laplacian is inverted.  When $V_1$ is \emph{separable},
 i.e., a sum of one-dimensional potentials, each depending on a single coordinate $x_j$, as is the trapping potential $|\bx|^2=\sum_{j=1}^{d}x_j^2$,  
 we can also consider the second scheme \eqref{dys2}, referred to as
DYS~II, in which
$(\bbI-\gamma\Delta_h+\gamma\bbV_1)^{-1}$ can still be inverted
easily and efficiently on GPUs on structured meshes, by the same
per-axis eigendecomposition that inverts $-\Delta_h$ directly, see
\cite{liu2026gpu}.   

From an implementation point of view, the methods reviewed in
\S\ref{sec:relatedwork} differ mainly in the linear system solved
at each iteration, compared in Table~\ref{tab:cost}. In general, a shifted Laplacian is much
easier to invert than an elliptic operator containing a spatially
varying potential term.

\begin{table}[htbp]
\centering
\small
\begin{tabular}{llcl}
\hline
method & operator to invert & solves/iter.\ & convergence proved\\
\hline
GFDN
& $-\Delta_h+\bbV+\beta\diag(\bu^2)+\tau^{-1}\bbI$
& 1 & ground state \cite{feng2025discrete}\\
damped $J$
& $-\Delta_h+\bbV+3\beta\diag(\bu^2)-\sigma\bbI$ (+ rank one)
& 2 & ground state \cite{altmann2021j}\\
$a_u$-flow
& $-\Delta_h+\bbV+\beta\diag(\bu^2)$
& 1 & ground state \cite{hauck2024positivity,henning2020sobolev}\\
$a_0$-flow
& $-\Delta_h+\bbV$
& 2 & critical point \cite{chen2023convergence}\\
$H^1$-flow
& $-\Delta_h+\alpha\bbI$
& 2 & critical point \cite{chen2024fully}\\
DYS~I
& $\bbI-\gamma\Delta_h$
& \textbf{1} & \textbf{ground state} (Theorem~\ref{thm:main})\\
\hline
\end{tabular}
\caption{Elliptic solvers needed by one iteration, and the
strongest convergence result proved for each method:
\emph{ground state} and \emph{critical point} mean global
convergence, levels (iii) and (ii) of \S\ref{sec:relatedwork}.}
\label{tab:cost}
\end{table}

\subsection{Contributions of the present work}
\label{sec:contrib}

The contributions of the present work are twofold. First, we
propose the two efficient splitting schemes DYS~I and DYS~II of
\eqref{dys} and \eqref{dys2}, whose single solve per iteration
involves only $\bbI-\gamma\Delta_h$ or, with $\bbV_1$ the
separable part of the potential,
$\bbI-\gamma\Delta_h+\gamma\bbV_1$. Second, we prove global
convergence to the unique positive discrete ground state for
\eqref{dys}, to our knowledge the first such result for a method
that inverts only a shifted Laplacian. Our main result
(Theorem~\ref{thm:main}) states that, if using a monotone discrete
Laplacian, then for every positive normalized initial guess
$\bu^0>0$ and every step size $0<\gamma\le\gamma_{\max}$, with
$\gamma_{\max}>0$ explicitly computable from the data, the
iteration \eqref{dys} converges to the unique positive discrete
ground state.  Once the iterates are close to the ground
state, the convergence is linear (Theorem~\ref{thm:rate}). 
For simplicity we prove the theorem only for DYS~I,
but all arguments apply to DYS~II, since
$\bbI-\gamma\Delta_h+\gamma\bbV_1$ differs from
$\bbI-\gamma\Delta_h$ by the nonnegative diagonal matrix
$\gamma\bbV_1$, which preserves the M-matrix structure of the negative discrete Laplacian and the
entrywise positive inverse used in the analysis, see Remark~\ref{d2:rem:dys2}. 
Alongside the theorem, Section~\ref{sec:numerics}
compares \eqref{dys} and its variant \eqref{dys2} with five
efficient methods on three-dimensional problems with up to $999^3$
unknowns on one GPU: with a simple step size rule, the Davis--Yin schemes match the conjugate gradient methods
in wall-clock time and are much more robust with respect to the
choice of the initial guess.

The general convergence theory for the Davis--Yin
splitting requires all three parts of the objective to be convex
\cite{davis2017three}, so the
convergence of \eqref{dys} has to be established from scratch due to  the unit sphere constraint.
The proof of Theorem~\ref{thm:main} overcomes two difficulties.
First, the energy of DYS I is not guaranteed to decrease
monotonically, thus instead we analyze  the radial
direction as well as the tangential one, through a Lyapunov
function that couples the energy of the normalized iterate with
a radial residual
$R^k=\norm{\bz^k}_2-1+\gamma\lambda(\bu^k)$, where $\lambda(\bu^k)$
is the discrete eigenvalue functional of \eqref{d2:grad}.
Second, one has to prove that the iterates stay
positive, and the proof is significantly different from and much
more difficult than those for the $a_u$-flow and the GFDN, since one
step of \eqref{dys} reads
$\bz^{k+1}=(\norm{\bz^k}_2-1)\bu^k+\bxit^k$, and the first term is
negative whenever $\norm{\bz^k}_2<1$.

\subsection{Organization of the paper}
\label{sec:organization}

Section~\ref{sec:fem} presents the discrete ground state problem and the
structural properties used in the paper.
Section~\ref{sec:setting} introduces the quantities in which the
analysis is carried out, states the main theorem, and describes the
three steps of our proof strategy.  Section~\ref{sec:onestep} derives two exact identities
for one step, proves positivity, the radial interval and a
conditional descent estimate for the Lyapunov function, and removes
the condition by an energy barrier argument.
In Section~\ref{sec:convergence}, we prove the global convergence to the
discrete ground state, and also prove a local linear rate. 
Numerical tests are given in Section~\ref{sec:numerics},
and concluding remarks are in Section~\ref{sec:conclusion}.
 
\section{The discrete ground state problem}
\label{sec:fem}

We discretize \eqref{continuum} in space by the classical continuous
finite element method with quadrature, following \cite{chen2024fully},
using either $Q^1$ elements with Gauss--Lobatto quadrature on a
uniform rectangular mesh, which coincides with the classical
second-order finite difference scheme, or $P^1$ elements with mass
lumping on a simplicial mesh satisfying a suitable angle
condition.  See \cite{chen2024fully} for their construction, the precise mesh
condition, and the monotonicity properties that follow, which are
also briefly reviewed in Appendix~\ref{app:fem}.  

Let $\bx_i$ ($i=1,\dots,N$) be the interior nodes of the mesh, let
$w_i>0$ be the quadrature weight (in either scheme above) at $\bx_i$, let $\phi_i$ be the nodal
basis functions, and let $\langle\cdot,\cdot\rangle$ denote the
quadrature approximation of the $L^2$ inner product. A 
function is represented by its nodal values
$\bu=\begin{bmatrix}u_1&\cdots&u_N\end{bmatrix}^\top\in\bR^N$. Set
\[
\bbM=\diag\{w_1,\cdots,w_N\},\qquad
\bbV=\diag\{V_1,\cdots,V_N\}\ \ \text{with}\ \ V_i=V(\bx_i),
\qquad
\bbS_{ij}=\langle\nabla\phi_i,\nabla\phi_j\rangle,
\]
so that $\bbM$ is the lumped mass matrix and $\bbS$ the stiffness
matrix. For a symmetric matrix $A$ we write $A\succeq0$ for positive
semidefiniteness and $A\preceq B$ for $B-A\succeq0$. The discrete
counterpart of \eqref{variational} is the
nonlinear eigenvalue problem
\begin{equation}
\bbS\bu+\bbM\bbV\bu+\beta\bbM\bu^3=\lambda_h\bbM\bu ,
\label{fd3}
\end{equation}
where $\bu^3:=\begin{bmatrix}u_1^3&\cdots&u_N^3\end{bmatrix}^\top$. Equivalently, with the discrete Laplacian
$\Delta_h:=-\bbM^{-1}\bbS$,
\begin{equation}
-\Delta_h\bu+\bbV\bu+\beta\bu^3=\lambda_h\bu .
\label{fd2}
\end{equation}
For $\bv\in\bR^N$ set $\norm{\bv}_p^p=\sum_iw_i|v_i|^p$ for
$1\le p<\infty$ and $\norm{\bv}_\infty=\max_i|v_i|$. For the case $p=2$, we denote the discrete
$L^2$ inner product and norm by
\begin{equation}
\langle\bu,\bv\rangle_h:=\bu^\top\bbM\bv ,
\qquad
\norm{\bu}_2:=\langle\bu,\bu\rangle_h^{1/2}.
\label{discreteL2norm}
\end{equation}
Thus the discrete energy is
\begin{equation}
\bE_h(\bu)
=\frac12\langle-\Delta_h\bu,\bu\rangle_h
+\frac12\langle\bbV\bu,\bu\rangle_h
+\frac\beta4\norm{\bu}_4^4 ,
\label{fd-energy}
\end{equation}
and the \emph{discrete ground state problem} is
\begin{equation}
\min_{\bu\in\bR^N}\ \bE_h(\bu)
\qquad\text{subject to}\qquad
\norm{\bu}_2=1 .
\label{gs-h}
\end{equation}
The constraint set is the manifold
$\calM:=\{\bu\in\bR^N:\norm{\bu}_2=1\}$, the unit sphere of the
discrete $L^2$ norm.
Any minimizer of \eqref{gs-h} solves \eqref{fd3}, and taking the
$\langle\cdot,\cdot\rangle_h$-product of \eqref{fd2} with $\bu$ gives,
as in \eqref{lambdastar},
$\lambda_h=2\bE_h(\bu)+\frac\beta2\norm{\bu}_4^4$. At a
fixed vector $\bu$, the linearized operator of \eqref{fd2} is
\begin{equation}
A_{\bu}=-\Delta_h+\bbV+\beta\diag(\bu^2) .
\label{eq:A_u}
\end{equation}

We assume that the spatial discretization is monotone:

\begin{assumption}[Monotone conforming discretization]\label{asp:struct}
Let $\Omega\subset\bR^d$, $d\le3$, be a bounded polytope on which
elliptic regularity holds, discretized by a shape-regular mesh, and
let $V_i\ge0$ for every $i$ and $\beta>0$. The spatial discretization
is a monotone scheme constructed from a conforming finite element
method for the Laplacian, that is, one of the two schemes of
Appendix~\ref{app:fem}, in both cases with the quadrature described
there, so that the mass matrix is diagonal:
\begin{itemize}
\item[(a)] the $Q^1$ finite element method on a uniform rectangular
mesh, with the two-point Gauss--Lobatto rule in each variable on each
cell, which coincides with the classical second-order finite
difference scheme, or
\item[(b)] the $P^1$ finite element method with the vertex rule on a
simplicial mesh satisfying the angle condition \eqref{simpicialmesh},
which in two dimensions a Delaunay triangulation satisfies.
\end{itemize} 
\end{assumption}

The properties of these schemes that the analysis uses are collected
in the following lemma, and no others are used.

\begin{lemma}[What the discretization provides]\label{lem:disc}
Let Assumption~\ref{asp:struct} hold. Then the following hold.
\begin{itemize}
\item[(i)] The mass matrix is diagonal,
$\bbM=\diag\{w_1,\dots,w_N\}$, with $w_i>0$ for every $i$.
\item[(ii)]  The stiffness matrix satisfies
$\bbS=\bbS^\top\succeq0$ and $\bbS_{ij}\le0$ for every $i\neq j$,
and for every $\gamma>0$ the inverse $(\bbM+\gamma\bbS)^{-1}$ has
strictly positive entries.
\item[(iii)] There is a constant $C_S$, independent of the
mesh size, such that, for any $\bv\in\bR^N$,
\begin{equation}
\norm{\bv}_6\ \le\ C_S\,\norm{\bv}_{1,h},
\qquad
\norm{\bv}_{1,h}^2:=\langle\bv,-\Delta_h\bv\rangle_h
+\norm{\bv}_2^2 ,
\label{sobolev}
\end{equation}
where $\norm{\cdot}_{1,h}$ is the discrete $H^1$ norm.
\end{itemize}
\end{lemma}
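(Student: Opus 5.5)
We treat the three items one at a time, using only the construction of the two schemes in Appendix~\ref{app:fem} together with standard facts about M-matrices and discrete Sobolev embeddings.

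\emph{Item (i).} This comes directly from the quadrature. In case (a) the two-point Gauss--Lobatto rule places its nodes at the cell vertices. In case (b) the vertex rule does the same. In both cases $\langle\phi_i,\phi_j\rangle=0$ for $i\neq j$, because $\phi_i$ vanishes at every node except $\bx_i$. Moreover $w_i=\langle\phi_i,\phi_i\rangle$ is a sum of positive quadrature weights over the cells containing $\bx_i$, so $w_i>0$.

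\emph{Item (ii).} Symmetry and $\bbS\succeq0$ follow because $\bv^\top\bbS\bv=\langle\nabla v_h,\nabla v_h\rangle\ge0$, where $v_h=\sum_iv_i\phi_i$. For $Q^1$ with Gauss--Lobatto quadrature on a uniform mesh, the stiffness matrix equals the standard finite difference stencil scaled by the cell volume, so its off-diagonal entries are $\le0$.

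For $P^1$, the classical cotangent formula (and its three-dimensional analogue involving dihedral angles) writes $\bbS_{ij}$ as a sum over elements sharing the edge $\bx_i\bx_j$ of negative multiples of cotangents. The angle condition \eqref{simpicialmesh} is exactly the statement that this sum is nonpositive.

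Next, $\bbM+\gamma\bbS$ has nonpositive off-diagonal entries and is symmetric positive definite, since $\bbM\succ0$. A symmetric positive definite Z-matrix is a nonsingular M-matrix, so its inverse is entrywise nonnegative. Strict positivity additionally requires irreducibility. The graph of interior nodes connected through nonzero $\bbS_{ij}$ is connected, assuming a connected domain and, in case (b), edges where the angle condition gives strictly negative entries (this is the form of the condition in \cite{chen2024fully}). For an irreducible nonsingular M-matrix the inverse is strictly positive, for instance via the Neumann series of $D^{-1}(D-A)$, whose powers eventually reach every entry. This irreducibility point is the only delicate step: if the angle condition is stated with non-strict inequalities, some $\bbS_{ij}$ could vanish. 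In that case I would cite the irreducibility argument of \cite{chen2024fully}.

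\emph{Item (iii).} Identify $\bv$ with $v_h=\sum v_i\phi_i\in H_0^1(\Omega)$.

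First, $\langle\bv,-\Delta_h\bv\rangle_h=\bv^\top\bbS\bv=\norm{\nabla v_h}_{L^2}^2$ exactly. In case (a) the Gauss--Lobatto quadrature is exact for the bilinear stiffness integrands on each cell, or at least spectrally equivalent to the exact integral with mesh-independent constants. In case (b) $\bbS$ is computed exactly.

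Second, on each element the lumped norms are equivalent to the exact $L^p$ norms of $v_h$, uniformly under shape regularity. This follows from a scaling argument on the reference element, since all norms on the finite-dimensional space of local polynomials are equivalent. The equivalence gives $\norm{\bv}_6\le c\norm{v_h}_{L^6}$ and $\norm{v_h}_{L^2}\le c\norm{\bv}_2$.

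Finally, the continuous Sobolev embedding $H_0^1\hookrightarrow L^6$ for $d\le3$ yields $\norm{v_h}_{L^6}\le C\norm{v_h}_{H^1}$. Chaining these inequalities gives \eqref{sobolev} with a mesh-independent constant $C_S$.
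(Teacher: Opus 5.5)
Your proposal is correct. For (i) and (ii) you follow the paper: the sign conditions come from the finite-difference stencil and from the cotangent formula under \eqref{simpicialmesh}, and positivity of $(\bbM+\gamma\bbS)^{-1}$ comes from the fact that an irreducible Stieltjes matrix has an entrywise positive inverse. The irreducibility you flag as delicate is not proved in the paper either. Appendix~\ref{app:fem} simply assumes that the adjacency graph of $\bbS$ is connected and declares this part of Assumption~\ref{asp:struct}, so deferring to that assumption is the right move.

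The genuine difference is in (iii). The paper cites \cite[Lemma~5.5(iv)]{chen2024fully} together with the norm equivalences in that paper's appendix, and says that this citation is where the elliptic-regularity hypothesis of Assumption~\ref{asp:struct} enters. You instead prove the bound directly, in three steps.
\begin{itemize}
\item[(1)] Vertex values are unisolvent for $P^1$ and $Q^1$, so the per-element quadrature sums are norms. An affine scaling argument then makes $\norm{\bv}_6\lesssim\norm{v_h}_{L^6}$ and $\norm{v_h}_{L^2}\lesssim\norm{\bv}_2$ hold with mesh-independent constants.
\item[(2)] The stiffness form $\bv^\top\bbS\bv$ controls $\norm{\nabla v_h}_{L^2}^2$.
\item[(3)] The continuous embedding $H_0^1\hookrightarrow L^6$ for $d\le3$ finishes the chain.
\end{itemize}
Your route is self-contained and uses only conformity, the nodal quadrature and $d\le3$, with no elliptic regularity. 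So, at least for this lemma, it shows that hypothesis is unnecessary. The paper's citation is shorter.

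One correction is needed. In case (a) the Gauss--Lobatto rule is \emph{not} exact for $|\nabla v_h|^2$: exact integration of $Q^1$ gives the $3^d$-point stencil, while the quadrature gives the $(2d+1)$-point one. Drop the word ``exactly'' and keep your fallback. Since $(\partial_{x_j}v_h)^2$ is constant in $x_j$, and the lumped rule in the remaining variables dominates the consistent mass matrix, one gets $\norm{\nabla v_h}_{L^2}^2\le\bv^\top\bbS\bv\le3^{d-1}\norm{\nabla v_h}_{L^2}^2$. Only the left inequality is needed for your chain.
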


\begin{proof}
Item (i) is the quadrature, and the two sign conditions in item (ii)
are the monotonicity of the discrete Laplacian, established for both
schemes in Appendix~\ref{app:fem}. For scheme (b) the second is
exactly what \eqref{simpicialmesh} provides
\cite[Lemma~2.1]{xu1999monotone}. Given them, $\bbM+\gamma\bbS$ is
symmetric and positive definite, because
$\bv^\top(\bbM+\gamma\bbS)\bv
=\bv^\top\bbM\bv+\gamma\,\bv^\top\bbS\bv\ge\bv^\top\bbM\bv>0$ for
$\bv\neq0$. Its off-diagonal entries are $\gamma\bbS_{ij}\le0$ for
$i\neq j$, since $\bbM$ is diagonal, and its adjacency graph is connected. The inverse of an irreducible
Stieltjes matrix is entrywise positive
\cite[Chapter~6]{bermanplemmons}, which is the last assertion of item
(ii). Item (iii) is \cite[Lemma~5.5(iv)]{chen2024fully} together with
the norm equivalences of its appendix, which identify
$\norm{\cdot}_{1,h}$ with the $H^1$ norm and the quadrature norms
with the Lebesgue norms, all with constants independent of the mesh
size. The elliptic regularity of $\Omega$ required in
Assumption~\ref{asp:struct} enters only here, as the hypothesis under
which the equivalence between $\norm{\cdot}_{1,h}$ and the $H^1$ norm
is proven in the appendix of \cite{chen2024fully}. Conformity of the
two schemes and $d\le3$ are what make the continuous embedding
$H^1\hookrightarrow L^6$ available.
\end{proof}

\begin{remark}[The discrete ground state]\label{d2:rem:gs}
Under Assumption~\ref{asp:struct} the matrix
$\bbM A_{\bu}=\bbS+\bbM\bbV+\beta\bbM\diag(\bu^2)$ is an irreducible
symmetric M-matrix for every $\bu$. As a consequence, \eqref{gs-h}
has a minimizer that is unique up to sign, whose positive
representative $\bu_{\mathrm{GS}}$ is called the \emph{discrete ground
state}. It is the unique nonnegative minimizer of
$\bE_h$ on $\calM$, and every nonnegative critical point of
$\bE_h|_{\calM}$ coincides with it. See Appendix~\ref{app:fem} for the argument and
for references.
\end{remark}

\section{The scheme, the Lyapunov function, and the main theorem}
\label{sec:setting}

The symbols $\succeq$ and $\preceq$ are used only for
symmetric matrices such as
$\bbS$ and $\bbM+\gamma\bbS$. Most operators of this paper
 need not be  symmetric matrices, among them the discrete Laplacian
$-\Delta_h=\bbM^{-1}\bbS$, the linearized operator $A_{\bu}$, the
tangent projection $\calP_{\bu}$ and the resolvent $\bbG_X^{-1}$
introduced below. Each of them is instead \emph{$h$-self-adjoint},
meaning that $\langle\bu,A\bv\rangle_h=\langle A\bu,\bv\rangle_h$
for all $\bu,\bv$, where $\langle\cdot,\cdot\rangle_h$ is the
discrete inner product \eqref{discreteL2norm}. Since
$\langle\bu,A\bv\rangle_h=\bu^\top\bbM A\bv$, this holds exactly
when the matrix $\bbM A$ is symmetric, which implies that
such an operator has  real eigenvalues and an $\langle\cdot,\cdot\rangle_h$-orthonormal
eigenbasis. Inequalities between $h$-self-adjoint operators are
always written out as inequalities between the associated quadratic
forms, never with $\preceq$.

\subsection{Norms, energy, and gradients}\label{ssec:norms}

The tangent projection at $\bu\in\calM$ is
$\calP_{\bu}:=\bbI-\bu\bu^\top\bbM$. Writing
$w_{\min}:=\min_iw_i>0$, one has the
pointwise bound
\begin{equation}
\norm{\bv}_\infty\le w_{\min}^{-1/2}\norm{\bv}_2
\qquad(\bv\in\bR^N),
\label{d2:embed}
\end{equation}
 since $w_i|v_i|^2\le\norm{\bv}_2^2$ for every $i$. The
bound holds only since the mesh is fixed and $\bR^N$ is finite
dimensional, and there is no such bound in the continuum. Its
constant is of order $h^{-d/2}$ on quasi-uniform meshes, and it is
the weakest of the three pointwise bounds collected in
Lemma~\ref{lem:ebd0}(ii). Those bounds and the stiffness-to-mass
ratio $\Lambda_{\mathrm d}$ are the only place where the mesh enters
the step-size threshold, see Remark~\ref{d2:rem:mesh}. 

The gradient of the discrete energy \eqref{fd-energy} with respect
to $\langle\cdot,\cdot\rangle_h$ is
\begin{equation}
\nabla\bE_h(\bu)=-\Delta_h\bu+\bbV\bu+\beta\bu^3 .
\label{d2:energy}
\end{equation}
The linearized operator \eqref{eq:A_u} is
$A_{\bu}=-\Delta_h+\bbV+\beta\diag(\bu^2)$, and
$\nabla\bE_h(\bu)=A_{\bu}\bu$. The eigenvalue functional and the
tangential part of the gradient are
\begin{equation}
\lambda(\bu):=\langle\nabla\bE_h(\bu),\bu\rangle_h,
\qquad
\bg(\bu):=\calP_{\bu}\nabla\bE_h(\bu)
=\nabla\bE_h(\bu)-\lambda(\bu)\,\bu .
\label{d2:grad}
\end{equation}
A point $\bu\in\calM$ is a critical point of $\bE_h|_{\calM}$ when
$\bg(\bu)=0$, equivalently $A_{\bu}\bu=\lambda(\bu)\bu$.

By \eqref{d2:grad} the vector $\bg(\bu)$ is the projection of the
gradient $\nabla\bE_h(\bu)$ onto the tangent plane of $\calM$ at
$\bu$, taken in the discrete $L^2$ metric
$\langle\cdot,\cdot\rangle_h$. We call it the \emph{projected
gradient}. It coincides with the Riemannian gradient of $\bE_h$ on
$\calM$ for that metric, written $\nabla^{\calR}_h\bE_h(\bu)$ in
\cite{chen2024fully}, but we avoid that name here,  since  the scheme
\eqref{dys} is not a Riemannian gradient method. In contrast, the $H^1$ flow is the Riemannian gradient descent method using the
$H^1$ metric but needs to invert the shifted Laplacian twice per iteration
\cite{chen2024fully}.
\subsection{The scheme, the radial residual, and the Lyapunov
function}\label{ssec:scheme}

The metric matrix of the scheme is $\bbG_X:=\bbI-\gamma\Delta_h$, so
that $\bbG_X^{-1}=(\bbM+\gamma\bbS)^{-1}\bbM$ is the resolvent
appearing in the iteration. The associated $X$-inner product and
$X$-norm are
\begin{equation}
\langle\bv,\bw\rangle_X:=\langle\bv,\bbG_X^{-1}\bw\rangle_h,
\qquad
\norm{\bv}_X:=\langle\bv,\bv\rangle_X^{1/2}.
\label{d2:Xnorm}
\end{equation}
The operator $\bbG_X^{-1}$  need not be  a symmetric matrix, but it is
$h$-self-adjoint in the sense of \S\ref{sec:setting}, since
$\bbM\bbG_X^{-1}=\bbM(\bbM+\gamma\bbS)^{-1}\bbM$ is symmetric.
All properties of the $X$-norm used below follow from
the spectral decomposition of $\bbG_X^{-1}$. Let $\mu_j\ge0$ be the
eigenvalues of $-\Delta_h$ for an $\langle\cdot,\cdot\rangle_h$-orthonormal
eigenbasis, in which $\bbG_X^{-1}$ has the eigenvalues
$1/(1+\gamma\mu_j)\in(0,1]$, and let $\hat v_j$ be the coefficients
of $\bv$ in that basis. Then
\[
\norm{\bv}_X^2=\sum_j\frac{\hat v_j^2}{1+\gamma\mu_j},
\qquad
\norm{\bbG_X^{-1}\bv}_2^2=\sum_j\frac{\hat v_j^2}{(1+\gamma\mu_j)^2},
\]
and $0<(1+\gamma\mu_j)^{-2}\le(1+\gamma\mu_j)^{-1}\le1$ and
$\frac{\gamma\mu_j}{1+\gamma\mu_j}\le\gamma\mu_j$ imply
\begin{gather}
\bv\neq0\ \Longrightarrow\
0<\norm{\bv}_X\le\norm{\bv}_2,
\qquad
\norm{\bbG_X^{-1}\bv}_2\le\norm{\bv}_X ,
\label{d2:Gbounds}\\
\bu\in\calM\ \Longrightarrow\
0\le1-\norm{\bu}_X^2=\sum_j\frac{\gamma\mu_j}{1+\gamma\mu_j}\hat u_j^2
\le\gamma\langle\bu,-\Delta_h\bu\rangle_h .
\label{n:resolventfacts}
\end{gather}
In particular $\langle\cdot,\cdot\rangle_X$ is an inner product.
We also use the norm of the metric matrix itself,
\begin{equation}
\norm{\bv}_{\calG}^2:=\langle\bv,\bbG_X\bv\rangle_h
=\norm{\bv}_2^2+\gamma\langle\bv,-\Delta_h\bv\rangle_h ,
\qquad\text{so that}\qquad
\norm{\bv}_2\le\norm{\bv}_{\calG},
\quad
\norm{\bbG_X\bv}_X=\norm{\bv}_{\calG} .
\label{n:Gnorm}
\end{equation}
The descent of one step is measured in $\norm{\cdot}_{\calG}$ in
Section~\ref{sec:onestep} and converted to $\norm{\cdot}_X$
in Proposition~\ref{prop:onestep}.

One step of the scheme is \eqref{dys} with the iteration index
dropped: it maps $\bz\neq0$ to $\bu=\bz/\norm{\bz}_2$, $\bxit$, and
$\bz^{+}=\bz+\bxit-\bu$.
The iteration is $\bz^{k+1}=(\bz^k)^{+}$ started from the normalized
positive initial vector $\bz^0=\bu^0$ with $\bu^0>0$ and
$\norm{\bu^0}_2=1$. For every $\bz\neq0$, with $\bu$ the normalized
iterate of \eqref{dys}, define the \emph{radial residual} and
the \emph{Lyapunov function}
\begin{equation}
R(\bz):=\norm{\bz}_2-1+\gamma\lambda(\bu),
\qquad
\calL(\bz):=\bE_h(\bu)+\frac{R(\bz)^2}{\gamma}.
\label{d2:orbit}
\end{equation}
 Note that $R=0$ is equivalent to
$\norm{\bz}_2=1-\gamma\lambda(\bu)$, which is the length of a fixed
point of \eqref{dys}, and $R$ carries no information on the
direction of $\bz$. The deviation in the tangential directions is
measured by $\bg(\bu)$, and at a fixed point both $R$ and $\bg(\bu)$
vanish, see \eqref{d2:step}.
Whenever $\bz^k\neq0$ we abbreviate
$\bu^k:=\bz^k/\norm{\bz^k}_2$,
$R^k:=R(\bz^k)$ and $\calL^k:=\calL(\bz^k)$, together with
$\bg^k:=\bg(\bu^k)$. The superscript $+$ marks the
same quantities after one step, at the next iteration index, e.g.,
$\bu^{+}:=\bz^{+}/\norm{\bz^{+}}_2$ is $\bu^{k+1}$ whenever
$\bz^{+}\neq0$, and $R^{+}:=R(\bz^{+})$.

\subsection{Data constants and the step-size threshold}\label{ssec:data}
 
The energy $\bE_h(\bu^0)$ and the eigenvalue functional
$\lambda(\bu^0)$ of the initial vector enter through the level
\begin{equation}
E_0:=\bE_h(\bu^0)+\frac{\lambda(\bu^0)^2}{\max\{1,128\,\bE_h(\bu^0)\}} .
\label{n:cap}
\end{equation}
Since $\tfrac\beta4\norm{\bu}_4^4\le\bE_h(\bu)$, the identity
$\lambda(\bu)=2\bE_h(\bu)+\tfrac\beta2\norm{\bu}_4^4$ implies
$\lambda(\bu^0)\le4\bE_h(\bu^0)$. Since
$\bE_h(\bu^0)/\max\{1,128\,\bE_h(\bu^0)\}\le\tfrac1{128}$ in both
cases of the maximum, we have
\[
\frac{\lambda(\bu^0)^2}{\max\{1,128\,\bE_h(\bu^0)\}}
\le\frac{16\,\bE_h(\bu^0)^2}{\max\{1,128\,\bE_h(\bu^0)\}}
\le\frac{16}{128}\,\bE_h(\bu^0)=\frac18\,\bE_h(\bu^0),
\]
and hence $\bE_h(\bu^0)\le E_0\le\tfrac98\bE_h(\bu^0)$. We will
show that $\calL(\bz^k)\le E_0$ for every $k$.
Let $\Lambda_{\mathrm d}:=\max_i\bbS_{ii}/w_i$ be the
stiffness-to-mass ratio of the discrete Laplacian. Note that
$\Lambda_{\mathrm d}>0$, since $\bbS_{ii}$ is the value of the
quadratic form of $\bbS$ at the $i$th unit vector and $\bbS$ is
positive definite, see Appendix~\ref{app:fem}. With
$w_{\min}=\min_iw_i$ as in \S\ref{ssec:norms}, set
\begin{gather}
P_0:=C_S^2\bigl(2E_0+1\bigr),
\qquad
U_0:=\min\Bigl\{\frac1{w_{\min}},\ \frac{P_0}{w_{\min}^{1/3}},\
2\sqrt{\frac{E_0}{\beta\,w_{\min}}}\Bigr\},
\label{n:Pb}\\
B_0:=\norm{V}_\infty+\beta\min\{P_0^{3/2},U_0\},
\qquad
D:=E_0+B_0,
\qquad
B_1:=\norm{V}_\infty+\beta U_0,
\label{n:BD}\\
C_0:=3\beta C_SP_0,
\qquad
L_0:=\norm{V}_\infty+\min\bigl\{3\beta U_0,\
C_0+2\max\{C_0-\tfrac14,0\}^2\bigr\}.
\label{n:LH}
\end{gather}
Here $P_0$ bounds $\norm{\bu}_6^2$ and $U_0$ bounds
$\norm{\bu}_\infty^2$, while $B_0$, $B_1$ and $L_0$
bound the explicit part
\begin{equation}
\bn(\bu):=\bbV\bu+\beta\bu^3,
\qquad
\nabla\bE_h(\bu)=-\Delta_h\bu+\bn(\bu),
\label{n:ndef}
\end{equation}
of the gradient and its Jacobian, for normalized vectors of
energy at most $E_0$, see Lemmas~\ref{lem:ebd0} and
\ref{lem:nonlinear}. The step-size threshold of
Theorem~\ref{thm:main} is
\begin{equation}
\gamma_{\max}:=\min\Bigl\{1,\ \frac1{128D},\ \frac1{8L_0},\
\frac{2}{B_1+\sqrt{B_1^2+12D\Lambda_{\mathrm d}}}\Bigr\}.
\label{n:gammamax}
\end{equation}
Every entry is an explicit function of the data. The proof uses
$\gamma_{\max}$ only through the four conditions of the next lemma.

\begin{lemma}[The step-size conditions]\label{lem:conditions}
A step size $\gamma>0$ satisfies $\gamma\le\gamma_{\max}$ if and only
if
\begin{equation}
\text{(C1)}\ \ \gamma\le1,\qquad
\text{(C2)}\ \ \gamma D\le\tfrac1{128},\qquad
\text{(C3)}\ \ \gamma L_0\le\tfrac18,\qquad
\text{(C4)}\ \ \gamma B_1+3\gamma^2D\Lambda_{\mathrm d}\le1 .
\label{n:cond}
\end{equation}
Under these conditions $\gamma\le1/\max\{1,128\,\bE_h(\bu^0)\}$, and
therefore $\calL(\bz^0)=\bE_h(\bu^0)+\gamma\lambda(\bu^0)^2\le E_0$
for $\bz^0=\bu^0$.
\end{lemma}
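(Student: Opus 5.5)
The equivalence of $\gamma\le\gamma_{\max}$ with (C1)--(C4) can be read off entry by entry from \eqref{n:gammamax}, since $\gamma\le\min\{a_1,\dots,a_4\}$ holds exactly when $\gamma\le a_i$ for each $i$. The first three entries translate directly: $\gamma\le1$ is (C1), $\gamma\le 1/(128D)$ is (C2), and $\gamma\le 1/(8L_0)$ is (C3). For these divisions we only need $D>0$ and $L_0>0$. Indeed $D\ge E_0\ge\bE_h(\bu^0)>0$ because $\beta>0$ and $\bu^0\ne0$. Likewise $L_0>0$, since $U_0>0$, $C_0>0$, and the max-term is nonnegative.

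The fourth entry is the one place where a little algebra is needed. The function $q(\gamma):=3D\Lambda_{\mathrm d}\gamma^2+B_1\gamma-1$ is a quadratic with positive leading coefficient (recall $\Lambda_{\mathrm d}>0$ and $D>0$) and $q(0)=-1<0$. It therefore has exactly one positive root $\gamma_+$, and for $\gamma>0$ we have $q(\gamma)\le0$ if and only if $\gamma\le\gamma_+$. By the quadratic formula, and then rationalizing the numerator,
\[
\gamma_+=\frac{-B_1+\sqrt{B_1^2+12D\Lambda_{\mathrm d}}}{6D\Lambda_{\mathrm d}}
=\frac{2}{B_1+\sqrt{B_1^2+12D\Lambda_{\mathrm d}}}.
\]
This is the last entry of $\gamma_{\max}$, so (C4) is equivalent to $\gamma$ being at most that entry. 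This rationalization is the only step requiring any care; the rest is bookkeeping.

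For the consequence, we distinguish two cases according to which term attains $\max\{1,128\,\bE_h(\bu^0)\}$.
\begin{itemize}
\item If $128\,\bE_h(\bu^0)\le1$, then (C1) gives $\gamma\le1=1/\max\{\cdot\}$.
\item Otherwise we use $D\ge E_0\ge\bE_h(\bu^0)$, shown in \S\ref{ssec:data}. Then (C2) gives $\gamma\le 1/(128D)\le1/(128\,\bE_h(\bu^0))$.
\end{itemize}
In both cases $\gamma\le1/\max\{1,128\,\bE_h(\bu^0)\}$.

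Finally, for $\bz^0=\bu^0$ we have $\norm{\bz^0}_2=1$, so $R(\bz^0)=\gamma\lambda(\bu^0)$. Hence
\[
\calL(\bz^0)=\bE_h(\bu^0)+\frac{\gamma^2\lambda(\bu^0)^2}{\gamma}=\bE_h(\bu^0)+\gamma\lambda(\bu^0)^2 .
\]
Inserting the bound on $\gamma$ just obtained gives $\calL(\bz^0)\le\bE_h(\bu^0)+\lambda(\bu^0)^2/\max\{1,128\,\bE_h(\bu^0)\}=E_0$ by \eqref{n:cap}.
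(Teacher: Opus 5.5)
Your proof is correct and follows the paper's argument: you read $\gamma\le\gamma_{\max}$ entry by entry, treat (C4) by the sign analysis of the quadratic with its rationalized positive root, and use (C1), (C2) and $D\ge E_0\ge\bE_h(\bu^0)$ to get $\gamma\le1/\max\{1,128\,\bE_h(\bu^0)\}$ and hence $\calL(\bz^0)\le E_0$. Your explicit checks that $D>0$ and $L_0>0$, needed for the divisions, are a small detail the paper leaves implicit.
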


\begin{proof}
(C1), (C2) and (C3) are the first three entries of
\eqref{n:gammamax}. The quadratic
$3D\Lambda_{\mathrm d}\gamma^2+B_1\gamma-1$ in (C4) has a
positive leading coefficient and the value $-1$ at $\gamma=0$, thus
it has one negative and one positive root, and it is nonpositive for
$\gamma>0$ if and only if $\gamma$ is at most the positive root
\[
\frac{-B_1+\sqrt{B_1^2+12D\Lambda_{\mathrm d}}}{6D\Lambda_{\mathrm d}}
=\frac{2}{B_1+\sqrt{B_1^2+12D\Lambda_{\mathrm d}}},
\]
which is the last entry of \eqref{n:gammamax}. Since
$D\ge E_0\ge\bE_h(\bu^0)$, (C1) and (C2) imply that $\gamma$ is at
most $1/\max\{1,128\,\bE_h(\bu^0)\}$. For $\bz^0=\bu^0$ the
residual is $R(\bz^0)=\gamma\lambda(\bu^0)$, which implies
$\calL(\bz^0)=\bE_h(\bu^0)+\gamma\lambda(\bu^0)^2\le
\bE_h(\bu^0)+\lambda(\bu^0)^2/\max\{1,128\,\bE_h(\bu^0)\}=E_0$.
\end{proof}

The next lemma collects the bounds implied by the energy level
$E_0$, which are used throughout Section~\ref{sec:onestep}.

\begin{lemma}[Consequences of the energy bound]\label{lem:ebd0}
Let $\bv\in\calM$ satisfy $\bE_h(\bv)\le E_0$. Then the following
hold.
\begin{itemize}
\item[(i)] $\langle\bv,-\Delta_h\bv\rangle_h\le2E_0$,
$\norm{\bv}_{1,h}^2\le2E_0+1$ and $\norm{\bv}_6^2\le P_0$.
\item[(ii)] $\norm{\bv}_\infty^2\le U_0$.
\item[(iii)] $\norm{\bv^3}_2\le\min\{P_0^{3/2},U_0\}$,
$\norm{\bv}_4^4\le\min\{P_0^{3/2},U_0\}$,
$\norm{\bn(\bv)}_2\le B_0$ and $\langle\bn(\bv),\bv\rangle_h\le B_0$.
\item[(iv)] $0\le\lambda(\bv)\le2D$, and thus
$0\le\gamma\lambda(\bv)\le2\gamma D$.
\end{itemize}
\end{lemma}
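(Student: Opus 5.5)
Each item follows from splitting the energy into its three nonnegative parts and then invoking Lemma~\ref{lem:disc}(iii) and the pointwise bound \eqref{d2:embed}. I take the items in order.

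\emph{Item (i).} Since $V_i\ge0$ and $\beta>0$, the potential and quartic terms of \eqref{fd-energy} are nonnegative, so $\frac12\langle\bv,-\Delta_h\bv\rangle_h\le\bE_h(\bv)\le E_0$. Adding $\norm{\bv}_2^2=1$ gives $\norm{\bv}_{1,h}^2\le2E_0+1$. The Sobolev bound \eqref{sobolev} then yields $\norm{\bv}_6^2\le C_S^2(2E_0+1)=P_0$.

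\emph{Item (ii).} I would prove the three bounds in the minimum of $U_0$ separately.
\begin{itemize}
\item The first is \eqref{d2:embed} with $\norm{\bv}_2=1$.
\item For the second, $w_i v_i^6\le\norm{\bv}_6^6\le P_0^3$ gives $v_i^2\le P_0\,w_i^{-1/3}\le P_0 w_{\min}^{-1/3}$.
\item For the third, $\frac\beta4 w_iv_i^4\le\frac\beta4\norm{\bv}_4^4\le\bE_h(\bv)\le E_0$ gives $v_i^2\le2\sqrt{E_0/(\beta w_{\min})}$.
\end{itemize}
The only step needing care is to state each bound coordinatewise and then take the minimum.

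\emph{Item (iii).} For the cube I would use two estimates. First, $\norm{\bv^3}_2^2=\norm{\bv}_6^6\le P_0^3$. Second, $\norm{\bv^3}_2^2=\sum_iw_iv_i^6\le\norm{\bv}_\infty^4\norm{\bv}_2^2\le U_0^2$. Together these give $\norm{\bv^3}_2\le\min\{P_0^{3/2},U_0\}$.

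For the quartic, Cauchy--Schwarz gives $\norm{\bv}_4^4=\langle\bv^3,\bv\rangle_h\le\norm{\bv^3}_2$, so the same minimum applies.

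For the nonlinearity, the triangle inequality gives $\norm{\bn(\bv)}_2\le\norm{V}_\infty\norm{\bv}_2+\beta\norm{\bv^3}_2\le B_0$. Finally, $\langle\bn(\bv),\bv\rangle_h\le\norm{\bn(\bv)}_2\norm{\bv}_2\le B_0$; alternatively, bound $\langle\bbV\bv,\bv\rangle_h\le\norm{V}_\infty$ and the quartic term directly.

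\emph{Item (iv).} The operator $\bbS$ is positive semidefinite and $V\ge0$, so every term of $\lambda(\bv)=\langle-\Delta_h\bv,\bv\rangle_h+\langle\bn(\bv),\bv\rangle_h$ is nonnegative, giving $\lambda(\bv)\ge0$. For the upper bound I would use the identity $\lambda=2\bE_h+\frac\beta2\norm{\bv}_4^4$. It gives $\lambda\le2E_0+\frac\beta2\min\{P_0^{3/2},U_0\}\le2E_0+B_0\le2D$. Multiplying by $\gamma>0$ gives the last claim.

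No step is a real obstacle; the main risk is bookkeeping, namely keeping the constants consistent with their definitions in \eqref{n:Pb}--\eqref{n:BD}.
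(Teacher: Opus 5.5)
Your proof is correct and follows the paper's argument essentially step for step: the same nonnegativity splitting of the energy for (i), the same coordinatewise extraction from $\norm{\bv}_2^2$, $\norm{\bv}_6^6$ and $\frac\beta4\norm{\bv}_4^4$ for (ii), and the same two estimates of $\norm{\bv^3}_2$ plus Cauchy--Schwarz for (iii). The only deviation is in (iv), where you bound $\lambda(\bv)$ through the identity $\lambda(\bv)=2\bE_h(\bv)+\frac\beta2\norm{\bv}_4^4$, whereas the paper adds $\langle\bv,-\Delta_h\bv\rangle_h\le2E_0$ and $\langle\bn(\bv),\bv\rangle_h\le B_0$; both routes give $\lambda(\bv)\le2E_0+B_0\le2D$.
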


\begin{proof}
(i) Each of the three terms of $\bE_h$ in \eqref{fd-energy} is
nonnegative under Assumption~\ref{asp:struct}, which implies
\[
\tfrac12\langle\bv,-\Delta_h\bv\rangle_h\le\bE_h(\bv)\le E_0,
\qquad
\norm{\bv}_{1,h}^2=\langle\bv,-\Delta_h\bv\rangle_h+\norm{\bv}_2^2
\le2E_0+1,
\]
and the Sobolev embedding \eqref{sobolev} implies
$\norm{\bv}_6^2\le C_S^2(2E_0+1)=P_0$.

(ii) A single term of a sum of nonnegative numbers is at most the
whole sum. Applied to $\norm{\bv}_2^2=\sum_iw_iv_i^2=1$, to
$\norm{\bv}_6^6=\sum_iw_iv_i^6\le P_0^3$ and to
$\tfrac\beta4\norm{\bv}_4^4=\tfrac\beta4\sum_iw_iv_i^4\le E_0$,
this implies $v_i^2\le w_i^{-1}$, $v_i^2\le P_0w_i^{-1/3}$ and
$v_i^2\le2\sqrt{E_0/(\beta w_i)}$ for every $i$, and $w_i\ge w_{\min}$.

(iii) $\norm{\bv^3}_2=\norm{\bv}_6^3\le P_0^{3/2}$, and
$\norm{\bv^3}_2\le\norm{\bv}_\infty^2\norm{\bv}_2\le U_0$. The
Cauchy--Schwarz inequality applied to the pair $(|\bv|,|\bv|^3)$ in
$\langle\cdot,\cdot\rangle_h$ implies
$\norm{\bv}_4^4\le\norm{\bv}_2\norm{\bv^3}_2$. Since
$\norm{\bbV\bv}_2\le\norm{V}_\infty\norm{\bv}_2$ and
$\langle\bbV\bv,\bv\rangle_h\le\norm{V}_\infty$, both
$\norm{\bn(\bv)}_2$ and $\langle\bn(\bv),\bv\rangle_h
=\langle\bbV\bv,\bv\rangle_h+\beta\norm{\bv}_4^4$ are at most
$\norm{V}_\infty+\beta\min\{P_0^{3/2},U_0\}=B_0$.

(iv) $\lambda(\bv)=\langle\bv,-\Delta_h\bv\rangle_h
+\langle\bn(\bv),\bv\rangle_h$ is a sum of nonnegative terms, and
by (i) and (iii) it is at most $2E_0+B_0\le2D$.
\end{proof}

\begin{remark}[Mesh dependence and the size of the threshold]\label{d2:rem:mesh}
The mesh is fixed throughout and every constant above is a finite
number determined by that mesh and the data, and no statement below
involves a limit $h\to0$. To see how the threshold behaves under
refinement, consider a family of quasi-uniform meshes, so that
$w_{\min}$ is of order $h^d$ and $\Lambda_{\mathrm d}$ of order
$h^{-2}$, and a family of initial vectors whose energies $\bE_h(\bu^0)$ are
bounded independently of $h$, as are the normalized interpolants of
a fixed smooth function that vanishes on $\partial\Omega$. Then the
constants split into three groups.
The first consists of universal numbers. The second consists of
$E_0$, $P_0$, $B_0$, $D$ and $C_0$, which are built from
$\bE_h(\bu^0)$, from $\norm{V}_\infty$ and $\beta$, and from the constant
$C_S$ of the Sobolev embedding \eqref{sobolev}, and are therefore
bounded independently of $h$. The third consists of the pointwise
bound $U_0$, of order at most $h^{-d/3}$ by its middle alternative, of
$B_1$, which is built from it, and of $\Lambda_{\mathrm d}$.
Since $L_0\le\norm{V}_\infty+C_0+2\max\{C_0-\tfrac14,0\}^2$ belongs
to the second group,
the mesh restricts $\gamma_{\max}$ only through the last entry of
\eqref{n:gammamax}, which is of order $h$ for $d\le3$. The
assumption on the initial energy cannot be dropped: the normalized
constant vector used as the
initial guess in Section~\ref{sec:numerics} vanishes on the
boundary, and this jump gives it an energy $\bE_h(\bu^0)$ of order $h^{-1}$
on the finite difference grid, for which $\gamma_{\max}$ is of order
$h^{2}$ in three dimensions. On the other hand, for a fixed
mesh, potential and initial vector, the constants $E_0$, $B_0$, $D$,
$B_1$ and $L_0$ grow at most linearly in $\beta$, thus
$\gamma_{\max}$ is of order $\beta^{-1}$ as $\beta\to\infty$, which
is also the order of the step size rule used in
Section~\ref{sec:numerics}.
\end{remark}

\subsection{The main theorem}\label{sec:main}

\begin{theorem}[Global convergence to the discrete ground
state]\label{thm:main}
Let Assumption~\ref{asp:struct} hold, start the iteration
\eqref{dys} from a positive normalized vector,
$\bz^0=\bu^0>0$ with $\norm{\bu^0}_2=1$, and let
$0<\gamma\le\gamma_{\max}$. Here $\gamma_{\max}>0$ is a
threshold, given in \eqref{n:gammamax}, determined by
the initial vector $\bu^0$, by the data $V$
and $\beta$, by the constant $C_S$ of the Sobolev embedding
\eqref{sobolev}, by
the mesh size through $w_{\min}=\min_iw_i$, and by the
stiffness-to-mass ratio $\Lambda_{\mathrm d}$ of the discrete
Laplacian. Then $\bu^k>0$
entrywise for every $k\ge0$, and
\[
\bu^k\to\bu_{\mathrm{GS}},
\qquad
\bE_h(\bu^k)\to\bE_h(\bu_{\mathrm{GS}})=\min_{\calM}\bE_h ,
\]
where $\bu_{\mathrm{GS}}>0$ is the positive global minimizer of
$\bE_h$ on $\calM$.
\end{theorem}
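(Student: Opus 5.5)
The proof proceeds by induction on $k$. At each step we establish three things at once: positivity of $\bz^k$, a two-sided bound on $\norm{\bz^k}_2$ (the radial interval), and the level bound $\calL(\bz^k)\le E_0$. The base case is Lemma~\ref{lem:conditions}, which gives $\calL(\bz^0)\le E_0$, together with $\bz^0=\bu^0>0$.

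For the inductive step we first derive two exact identities for one step. Write $\bxit=\bbG_X^{-1}(2\bu-\bz-\gamma\bn(\bu))$ and $\bz=\norm{\bz}_2\bu$. This gives
\[
\bz^+-\bz=\bxit-\bu=-\bbG_X^{-1}\bigl((\norm{\bz}_2-1)\bu+\gamma\nabla\bE_h(\bu)\bigr),
\]
and we decompose the bracket as $R(\bz)\bu+\gamma\bg(\bu)$. The radial component of $\bz^+$ is then $\langle\bz^+,\bu\rangle_h=\norm{\bz}_2-\langle\bu,\bbG_X^{-1}(R\bu+\gamma\bg)\rangle_h$. The defect $1-\norm{\bu}_X^2\le\gamma\langle\bu,-\Delta_h\bu\rangle_h$ from \eqref{n:resolventfacts} controls the coupling between the two components. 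This yields a recursion $R^+\approx(1-\norm{\bu}_X^2)R+O(\gamma^2)+\gamma(\lambda(\bu^+)-\lambda(\bu))$, so $R$ contracts.

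The tangential update $\bu^+-\bu\approx-\gamma\,\calP_{\bu}\bbG_X^{-1}\bg/\norm{\bz}_2$ is a preconditioned projected gradient step. Expanding $\bE_h$ to second order, with its Jacobian bounded by $L_0$ through Lemma~\ref{lem:ebd0} and the resolvent absorbing $-\Delta_h$ in the $\calG$-norm, should give a descent estimate
\[
\calL^+\le\calL-c\gamma\norm{\bg}_X^2-\frac{c}{\gamma}(R^+-R)^2 .
\]
This estimate is conditional: it holds provided $\bu^+$ also has energy at most $E_0$, because the constants $B_0,B_1,L_0$ are valid only on that sublevel set. We remove the condition by an energy barrier (continuity) argument. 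Along the segment $t\mapsto$ the step with $\gamma$ replaced by the fraction $t$ of the increment, the energy cannot first cross $E_0$: at the crossing point the descent estimate would already force $\calL<E_0$. The radial interval follows from (C2), which gives $\gamma\lambda\le2\gamma D\le1/64$, together with $|R|\le\sqrt{\gamma E_0}$. These yield $\norm{\bz}_2\in[1-\tfrac18,1+\tfrac18]$, which keeps $\bz\neq0$.

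Positivity is the step I expect to be the main obstacle. Write
\[
\bz^+=(\norm{\bz}_2-1)\bu+(\bbM+\gamma\bbS)^{-1}\bbM\bigl((2-\norm{\bz}_2)\bu-\gamma\bbV\bu-\gamma\beta\bu^3\bigr).
\]
Under (C4), with $\gamma\norm{V}_\infty+\gamma\beta\norm{\bu}_\infty^2\le\gamma B_1$ via Lemma~\ref{lem:ebd0}(ii), the vector inside the bracket is bounded below entrywise by $(2-\norm{\bz}_2-\gamma B_1)\bu>0$. The first term may be negative when $\norm{\bz}_2<1$. To handle it, I will compare entrywise using the positive inverse from Lemma~\ref{lem:disc}(ii). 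Since $(\bbM+\gamma\bbS)\bu\le\bbM\bu+\gamma\,\diag(\bbS)\bu$ because the off-diagonal entries are nonpositive and $\bu>0$, we get the lower bound $(\bbM+\gamma\bbS)^{-1}\bbM\bu\ge(1+\gamma\Lambda_{\mathrm d})^{-1}\bu$ entrywise. This reduces positivity to the scalar inequality
\[
(2-\norm{\bz}_2-\gamma B_1)\ge(1-\norm{\bz}_2)(1+\gamma\Lambda_{\mathrm d}).
\]
Here $1-\norm{\bz}_2\le\gamma\lambda+|R|$, and the $3\gamma^2D\Lambda_{\mathrm d}$ term in (C4) is designed exactly for this. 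If the crude bound $|R|\lesssim\sqrt{\gamma}$ turns out to be too weak, I would first try to prove the sharper $1-\norm{\bz}_2\le3\gamma D$ as part of the radial-interval induction.

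Finally, summing the descent estimate gives $\sum_k\norm{\bg^k}_X^2<\infty$ and $\sum_k(R^{k+1}-R^k)^2<\infty$. Hence $\bg^k\to0$, and the iterates stay in the compact set $\calM\cap\{\bE_h\le E_0\}$. Every accumulation point $\bar\bu$ is nonnegative (as a limit of positive vectors) and satisfies $\bg(\bar\bu)=0$. By Remark~\ref{d2:rem:gs}, $\bar\bu=\bu_{\mathrm{GS}}$. Since the accumulation point is unique, the whole sequence converges, and $\bE_h(\bu^k)\to\min_\calM\bE_h$ by continuity.
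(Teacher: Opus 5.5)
Your outline has the same architecture as the paper: an induction carrying positivity, a radial interval and $\calL\le E_0$; a descent estimate valid when $\bE_h(\bu^+)\le E_0$; an energy barrier that removes this condition; and Perron--Frobenius to identify the limit. The closing compactness argument is fine. The gap is the energy barrier: neither continuation you describe lets you invoke the conditional descent at a first crossing.

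\begin{itemize}
\item If you keep the input $\bz$ and replace $\gamma$ by $t\gamma$, its residual $\norm{\bz}_2-1+t\gamma\lambda(\bu)$ tends to $\norm{\bz}_2-1\neq0$ as $t\to0$. The input's Lyapunov value $\bE_h(\bu)+R_t^2/(t\gamma)$ is then unbounded, and the radial hypothesis $1-3t\gamma D\le\norm{\bz}_2$ fails, so at the crossing nothing is bounded by $E_0$.
\item If you move along the segment $\bz+t(\bz^+-\bz)$, the intermediate points are not outputs of \eqref{dys} for any step size, so the step identities behind the descent do not apply to them.
\end{itemize}

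The missing idea is to shrink the radial deviation of the input together with the step size. Take $\bz_\theta:=\bigl(1-\theta(1-\norm{\bz}_2)\bigr)\bu$ with step size $\theta\gamma$. Then:
\begin{itemize}
\item each $\bz_\theta^+$ is a genuine step of the scheme, with $\bz_0^+=\bu$ and $\bz_1^+=\bz^+$;
\item \eqref{n:hyp} and (C2)--(C4) hold with $\theta\gamma$ in place of $\gamma$;
\item $R_\theta(\bz_\theta)=\theta R$, so $\calL_\theta(\bz_\theta)=\bE_h(\bu)+\theta R^2/\gamma\le\calL(\bz)\le E_0$.
\end{itemize}
Two further details are needed. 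First, if $\bE_h(\bu)=E_0$ (which forces $R=0$), the crossing could sit at $\theta=0$; the paper excludes this via $\frac{d}{d\theta}\bE_h(\bu_\theta^+)\big|_{\theta=0}=-\gamma\norm{\bg(\bu)}_2^2<0$. Second, the descent by itself only yields $\le E_0$, so the contradiction at $\theta_*>0$ needs its equality case: a zero drop forces $R=0$ and $\bg(\bu)=0$, i.e.\ a fixed point.

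Your fallback in the positivity step is also necessary, not optional. The entrywise bound $\bbG_X^{-1}\bu\ge(1+\gamma\Lambda_{\mathrm d})^{-1}\bu$ is correct, and it reduces positivity to (C4) precisely when $1-\norm{\bz}_2\le3\gamma D$. But $\calL\le E_0$ gives only $1-\norm{\bz}_2\le2\gamma D+\sqrt{\gamma E_0}$, which is of order $\sqrt\gamma$ and is not absorbed by (C4). The sharp interval must therefore be carried in the induction and propagated from one step, using only $\bu>0$ and $\bE_h(\bu)\le E_0$.
\begin{itemize}
\item For the lower bound, the paper uses $\norm{\bz^+}_2\ge\langle\bu,\bz^+\rangle_h$ together with $1-\langle\bu,\bz^+\rangle_h=(2-\norm{\bz}_2)(1-\norm{\bu}_X^2)+\gamma\langle\bbG_X^{-1}\bu,\bn(\bu)\rangle_h\le2\gamma D+6\gamma^2D^2\le3\gamma D$.
\item For the upper bound $\norm{\bz^+}_2\le1$, which is used later in the descent, it uses $0<\bz^+\le\bigl((\norm{\bz}_2-1)\bbI+(2-\norm{\bz}_2)\bbG_X^{-1}\bigr)\bu$. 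This relies on $\bn(\bu)\ge0$ and on that operator having spectrum in $[-1,1]$.
\end{itemize}

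Finally, the descent estimate is asserted rather than derived, and your residual recursion is not yet in usable form. The terms you call $O(\gamma^2)$, such as $\tfrac12\norm{\bz^+}_2\norm{\bu^+-\bu}_2^2$ and $\gamma\langle\bbG_X^{-1}\bg(\bu),\bu\rangle_h$, are really of order $\gamma D$. An additive bound of that kind cannot be absorbed into a descent inequality. These terms must instead be bounded by multiples of $\sqrt{\gamma D}\,\norm{\bu^+-\bu}_{\calG}+\gamma D|R|$. The paper obtains such a bound from \eqref{n:residid}, derived by testing the step against $\bu^+$. There the kinetic variation of $\lambda$ enters only through the small factors $\norm{\bz^+}_2-1$ and $\norm{\bz^+}_2-\norm{\bz}_2$. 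In your form it appears whole, with size up to $2\sqrt{2\gamma D}\,\norm{\bu^+-\bu}_{\calG}$. That leaves only a thin margin under $\gamma D\le\frac1{128}$, so whether the constants close has to be checked.
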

 
\begin{remark}[The four step-size conditions]\label{d2:rem:constants}
The four conditions of Lemma~\ref{lem:conditions} enter the proof at
separate places. (C2) makes the radial deviation
$1-\norm{\bz^k}_2$, the residual $R^k$ and the change of direction
in one step small, of order $\gamma D$ or $\sqrt{\gamma D}$.
(C3) controls the second-order Taylor remainder of the explicit part
of the energy, by whichever of the pointwise and the Sobolev
estimates is better. (C4) is the positivity condition, and for the
families of bounded initial energy in Remark~\ref{d2:rem:mesh} it is
the only condition that restricts the step size through the mesh.
(C1) is used only in Lemma~\ref{lem:conditions}, to place the
initial value $\calL^0$ below $E_0$. The descent inequality \eqref{n:drop} of
Proposition~\ref{prop:onestep} has the universal coefficients
$\tfrac14$ and $\tfrac12$.
\end{remark}

\begin{remark}[The variant DYS~II]\label{d2:rem:dys2}
The analysis below applies without change to \eqref{dys2}, with
$V_1\ge0$ and $V_2\ge0$, after the substitutions
\[
-\Delta_h\ \to\ -\Delta_h+\bbV_1,
\qquad
\bn(\bu)\ \to\ \bbV_2\bu+\beta\bu^3,
\qquad
\norm{V}_\infty\ \to\ \norm{V_2}_\infty,
\qquad
\Lambda_{\mathrm d}\ \to\ \max_i\bigl(\bbS_{ii}/w_i+V_{1,i}\bigr).
\]
The energy and the level $E_0$ are unchanged, the metric
matrix becomes $\bbI-\gamma\Delta_h+\gamma\bbV_1$, and the norms
$\norm{\cdot}_X$ and $\norm{\cdot}_{\calG}$ are built from it. The matrix
$\bbM+\gamma\bbS+\gamma\bbM\bbV_1$ is again an irreducible Stieltjes
matrix, hence Lemma~\ref{lem:disc}(ii) holds for it, and since
$V_1\ge0$ the quadratic form
$\langle\bv,(-\Delta_h+\bbV_1)\bv\rangle_h$ is still at most
$2\bE_h(\bv)$ and still dominates
$\langle\bv,-\Delta_h\bv\rangle_h$ in the Sobolev inequality
\eqref{sobolev}. With these substitutions every constant of
\S\ref{ssec:data}, every identity and every estimate below holds
for DYS~II.
\end{remark}

\subsection*{Proof strategy}
 
The proof consists of three steps. Steps 1 and 2 are in
Section~\ref{sec:onestep}, with Step 2 in
\S\ref{sec:positivity}, and Step 3 is in Section~\ref{sec:convergence}.

\smallskip\noindent
Step 1 is the analysis of one step, from $\bz^k$ to $\bz^{k+1}$.
Two exact identities express the
change of energy and the new residual after one step in terms of the
change of direction $\bu^{k+1}-\bu^k$, the two lengths
$\norm{\bz^k}_2$ and $\norm{\bz^{k+1}}_2$, and the Taylor remainder
of the explicit part of the energy. In the energy identity the
implicit kinetic term of the step has a positive coefficient, and in
the residual identity the leading kinetic variation of $\lambda$
cancels. Positivity of $\bz^{k+1}$ and the one-sided radial interval
$1-3\gamma D\le\norm{\bz^{k+1}}_2\le1$ follow from the entrywise
positive resolvent and the sign structure of the discretization,
using only the energy bound of $\bu^k$. If $\bu^{k+1}$ also has
energy at most $E_0$, the identities imply the conditional descent
$\gamma(\calL^k-\calL^{k+1})\ge\frac5{16}\norm{\bu^{k+1}-\bu^k}_{\calG}^2
+\frac34(R^k)^2$.

\smallskip\noindent
Step 2 is an energy barrier argument. The condition on the new energy
is removed by a continuation argument, in which the step size and
the radial deviation of the input are scaled by the same factor. The
scaled input
has a smaller residual and a Lyapunov value still at most $E_0$, and
a first contact of the energy of the new direction with the level
$E_0$ would contradict the conditional descent. Hence the descent
holds unconditionally, which is Proposition~\ref{prop:onestep}.

\smallskip\noindent
Step 3 is the propagation and the passage to the limit. A single
induction propagates $\bu^k>0$, the radial interval and
$\calL^k\le E_0$. Summing the one-step drop along the whole sequence gives
$\sum_k\gamma\norm{\bg^k}_X^2<\infty$ and
$\sum_k(R^k)^2/\gamma<\infty$, thus $\norm{\bg^k}_2\to0$ and
$R^k\to0$. Every
accumulation point of $(\bu^k)$ is therefore a critical point of
$\bE_h|_{\calM}$, and it is nonnegative since it is a limit of
positive iterates. Since the only nonnegative critical point is the positive global
minimizer $\bu_{\mathrm{GS}}$ (Remark~\ref{d2:rem:gs}), every
accumulation point equals $\bu_{\mathrm{GS}}$, and by compactness of
$\calM$ the whole sequence converges. This is
Theorem~\ref{thm:main}. Once the iterates are close to
$\bu_{\mathrm{GS}}$, the same descent inequality implies a linear
rate, which is Theorem~\ref{thm:rate}.

\smallskip\noindent
 Sections~\ref{sec:onestep} and \ref{sec:convergence}
are devoted to the proofs of Theorems~\ref{thm:main} and
\ref{thm:rate}.

\section{Estimates and properties of one iteration}
\label{sec:onestep}

For convenience, in this section we drop the iteration index $k$
and mark the next iteration by the superscript $+$: $\bz$, $\bu$ and
$\bxit$ stand for $\bz^k$, $\bu^k$ and $\bxit^k$, and $\bz^+$,
$\bu^+$, $R^+$ and $\calL^+$ for $\bz^{k+1}$, $\bu^{k+1}$, $R^{k+1}$
and $\calL^{k+1}$, where $\bz\neq0$ and
$\bu^+=\bz^+/\norm{\bz^+}_2$ if $\bz^+\neq0$. The change of
direction is $\bdelta:=\bu^+-\bu$. Since
$\norm{\bu}_2=\norm{\bu^+}_2=1$ and $\bu^+=\bz^+/\norm{\bz^+}_2$, we
have
$\norm{\bdelta}_2^2=\norm{\bu^+}_2^2-2\langle\bu,\bu^+\rangle_h+\norm{\bu}_2^2
=2-2\langle\bu,\bu^+\rangle_h$ and
$\langle\bu,\bdelta\rangle_h=\langle\bu,\bu^+\rangle_h-1$, i.e.,
\begin{equation}
\langle\bu,\bdelta\rangle_h=-\tfrac12\norm{\bdelta}_2^2,
\qquad
\norm{\bdelta}_2^2=2\bigl(1-\langle\bu,\bu^+\rangle_h\bigr)
=2\Bigl(1-\frac{\langle\bu,\bz^+\rangle_h}{\norm{\bz^+}_2}\Bigr).
\label{n:chord}
\end{equation}
Let $H(\bu):=\tfrac12\langle\bbV\bu,\bu\rangle_h+\tfrac\beta4\norm{\bu}_4^4$
be the explicit part of the energy, as in Appendix~\ref{app:dys}, so
that $\bE_h(\bu)=\tfrac12\langle\bu,-\Delta_h\bu\rangle_h+H(\bu)$
and the gradient of $H$ with respect to $\langle\cdot,\cdot\rangle_h$
is $\bn$ of \eqref{n:ndef}. The Taylor remainder of $H$ along
$\bdelta$ is
\begin{equation}
\begin{gathered}
\calT(\bu,\bdelta):=H(\bu^+)-H(\bu)-\langle\bn(\bu),\bdelta\rangle_h
=\int_0^1(1-\sigma)\,\langle\bn'(\bu+\sigma\bdelta)\bdelta,
\bdelta\rangle_h\,d\sigma,
\\
\bn'(\bw):=\bbV+3\beta\diag(\bw^2),
\end{gathered}
\label{n:remainder}
\end{equation}
by Taylor's formula with integral remainder for the polynomial
$\sigma\mapsto H(\bu+\sigma\bdelta)$, whose second derivative is
$\langle\bn'(\bu+\sigma\bdelta)\bdelta,\bdelta\rangle_h$. Since the
kinetic part of $\bE_h$ is quadratic, we have
$\tfrac12\langle\bu^+,-\Delta_h\bu^+\rangle_h
=\tfrac12\langle\bu,-\Delta_h\bu\rangle_h+\langle-\Delta_h\bu,\bdelta\rangle_h
+\tfrac12\langle\bdelta,-\Delta_h\bdelta\rangle_h$, and adding
\eqref{n:remainder}, we get the Taylor expansion of $\bE_h$,
\begin{equation}
\bE_h(\bu^+)-\bE_h(\bu)=\langle\nabla\bE_h(\bu),\bdelta\rangle_h
+\tfrac12\langle\bdelta,-\Delta_h\bdelta\rangle_h+\calT(\bu,\bdelta),
\label{n:taylorE}
\end{equation}
where $\nabla\bE_h(\bu)=-\Delta_h\bu+\bn(\bu)$ by \eqref{n:ndef}.

\subsection{Exact identities for one step}\label{ssec:identities}

\begin{lemma}[Step identities]\label{lem:step}
Let $\bz\neq0$. Then
\begin{equation}
\bbG_X\bz^{+}=\bu-\gamma\bn(\bu)-\gamma\bigl(1-\norm{\bz}_2\bigr)(-\Delta_h\bu),
\qquad
\bbG_X\bigl(\bz^{+}-\bz\bigr)=-\bigl(R\,\bu+\gamma\,\bg(\bu)\bigr).
\label{d2:step}
\end{equation}
If in addition $\bz^+\neq0$, then
\begin{gather}
\norm{\bz^+}_2\,\bbG_X\bdelta=\bigl(1-\norm{\bz^+}_2\bigr)\bu
-\gamma\nabla\bE_h(\bu)
-\gamma\bigl(\norm{\bz^+}_2-\norm{\bz}_2\bigr)(-\Delta_h\bu),
\label{n:stepdelta}\\
\bigl(1+\gamma\langle\bu,-\Delta_h\bu\rangle_h\bigr)
\bigl(\norm{\bz^+}_2-\norm{\bz}_2\bigr)
=-R-\norm{\bz^+}_2\Bigl(\gamma\langle-\Delta_h\bu,\bdelta\rangle_h
-\tfrac12\norm{\bdelta}_2^2\Bigr).
\label{n:radialid}
\end{gather}
\end{lemma}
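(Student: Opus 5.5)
The four identities are purely algebraic consequences of the definition of one step. I will derive them in the order stated, each from the previous one. Throughout I use three facts: $\bbG_X=\bbI+\gamma(-\Delta_h)$; $\bz=\norm{\bz}_2\bu$; and the decomposition $\nabla\bE_h(\bu)=-\Delta_h\bu+\bn(\bu)=\bg(\bu)+\lambda(\bu)\bu$ from \eqref{n:ndef} and \eqref{d2:grad}. No inequality and none of the structural properties of Lemma~\ref{lem:disc} are needed, apart from the $h$-self-adjointness of $-\Delta_h$ in the last step.

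For the first identity in \eqref{d2:step}, I apply $\bbG_X$ to $\bz^+=\bz+\bxit-\bu$ and use $\bbG_X\bxit=2\bu-\bz-\gamma\bn(\bu)$ from \eqref{dys}. Then I expand $\bbG_X\bz-\bbG_X\bu=(\bz-\bu)+\gamma(-\Delta_h)(\bz-\bu)$. The terms $\bz$ and $-\bz$ cancel and $2\bu-\bu=\bu$, so what remains is $\bu-\gamma\bn(\bu)+\gamma(-\Delta_h)(\bz-\bu)$. Substituting $\bz-\bu=(\norm{\bz}_2-1)\bu$ gives the claim.

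For the second identity in \eqref{d2:step}, I subtract $\bbG_X\bz=\norm{\bz}_2\bu+\gamma\norm{\bz}_2(-\Delta_h\bu)$ from the first. The two kinetic terms combine to $-\gamma(-\Delta_h\bu)$, which gives $\bbG_X(\bz^+-\bz)=(1-\norm{\bz}_2)\bu-\gamma\nabla\bE_h(\bu)$. Writing $\nabla\bE_h(\bu)=\bg(\bu)+\lambda(\bu)\bu$ turns the coefficient of $\bu$ into $1-\norm{\bz}_2-\gamma\lambda(\bu)=-R$, which is the claim.

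For \eqref{n:stepdelta}, I write $\bz^+=\norm{\bz^+}_2(\bu+\bdelta)$. Then $\bbG_X\bz^+=\norm{\bz^+}_2\bigl(\bu+\gamma(-\Delta_h\bu)\bigr)+\norm{\bz^+}_2\bbG_X\bdelta$. Equating this with the first identity of \eqref{d2:step} and solving for $\norm{\bz^+}_2\bbG_X\bdelta$, the $\bu$-terms give $(1-\norm{\bz^+}_2)\bu$. The kinetic coefficients combine to $-\gamma(1-\norm{\bz}_2+\norm{\bz^+}_2)$. I split this as $-\gamma$ times $-\Delta_h\bu$ plus $-\gamma(\norm{\bz^+}_2-\norm{\bz}_2)$ times $-\Delta_h\bu$, and the first piece merges with $-\gamma\bn(\bu)$ into $-\gamma\nabla\bE_h(\bu)$.

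For \eqref{n:radialid}, I take the $\langle\cdot,\cdot\rangle_h$-product of \eqref{n:stepdelta} with $\bu$. On the left, $h$-self-adjointness of $-\Delta_h$ and \eqref{n:chord} give
\[
\norm{\bz^+}_2\bigl(\langle\bu,\bdelta\rangle_h+\gamma\langle-\Delta_h\bu,\bdelta\rangle_h\bigr)=\norm{\bz^+}_2\bigl(\gamma\langle-\Delta_h\bu,\bdelta\rangle_h-\tfrac12\norm{\bdelta}_2^2\bigr).
\]
On the right I get $1-\norm{\bz^+}_2-\gamma\lambda(\bu)-\gamma(\norm{\bz^+}_2-\norm{\bz}_2)\langle\bu,-\Delta_h\bu\rangle_h$. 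I rewrite $1-\norm{\bz^+}_2-\gamma\lambda(\bu)=-R-(\norm{\bz^+}_2-\norm{\bz}_2)$ and collect the two multiples of $\norm{\bz^+}_2-\norm{\bz}_2$, which gives \eqref{n:radialid}.

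There is no real obstacle; the only place to be careful is the bookkeeping of the kinetic coefficients in \eqref{n:stepdelta}. There the $-\Delta_h\bu$ contributions from $\bbG_X\bu$ and from the first identity must be regrouped so that exactly one copy joins $\bn(\bu)$ to form $\nabla\bE_h(\bu)$.
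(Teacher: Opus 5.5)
Your proof is correct and essentially the paper's; I checked each rearrangement, including the regrouping of the kinetic coefficients in \eqref{n:stepdelta} and the pairing with $\bu$ via \eqref{n:chord} in \eqref{n:radialid}. The only differences are that you equate two expressions for $\bbG_X\bz^+$ instead of applying $\bbG_X$ to $\bz^+-\bz=\norm{\bz^+}_2\bdelta+(\norm{\bz^+}_2-\norm{\bz}_2)\bu$, and you pair \eqref{n:stepdelta} rather than the second identity of \eqref{d2:step} with $\bu$; these amount to subtracting $\bbG_X\bz$ from both sides, i.e., the same computation.
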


\begin{proof}
By \eqref{dys}, we have $\bz^{+}-\bz=\bxit-\bu$ and
$\bbG_X\bxit=2\bu-\bz-\gamma\bn(\bu)$, which imply
\[
\bbG_X\bz^+=\bbG_X\bz+2\bu-\bz-\gamma\bn(\bu)-\bbG_X\bu
=(\bbG_X-\bbI)(\bz-\bu)+\bu-\gamma\bn(\bu),
\]
and, since $\bbG_X-\bbI=-\gamma\Delta_h$ and
$\bz-\bu=(\norm{\bz}_2-1)\bu$, we have
$(\bbG_X-\bbI)(\bz-\bu)=(\norm{\bz}_2-1)(-\gamma\Delta_h)\bu$, which
implies the first identity. Since $\bz=\norm{\bz}_2\bu$, we have
$\bbG_X\bz=\norm{\bz}_2\bbG_X\bu
=\norm{\bz}_2\bu+\gamma\norm{\bz}_2(-\Delta_h\bu)$, and subtracting
it from the first identity, we have
\begin{equation}
\bbG_X(\bz^+-\bz)=(1-\norm{\bz}_2)\bu-\gamma(-\Delta_h\bu)-\gamma\bn(\bu)
=(1-\norm{\bz}_2)\bu-\gamma\nabla\bE_h(\bu),
\label{n:stepgrad}
\end{equation}
and $\nabla\bE_h(\bu)=\lambda(\bu)\bu+\bg(\bu)$ by \eqref{d2:grad}
and the definition \eqref{d2:orbit} of $R$ imply the second
identity. For \eqref{n:stepdelta}, we have
\begin{equation}
\bz^+-\bz=\norm{\bz^+}_2\bu^+-\norm{\bz}_2\bu
=\norm{\bz^+}_2\bdelta+(\norm{\bz^+}_2-\norm{\bz}_2)\bu .
\label{n:zdecomp}
\end{equation}
Applying $\bbG_X$ to \eqref{n:zdecomp}, with
$\bbG_X\bu=\bu+\gamma(-\Delta_h\bu)$, and using \eqref{n:stepgrad},
we have
\[
\norm{\bz^+}_2\bbG_X\bdelta
+(\norm{\bz^+}_2-\norm{\bz}_2)\bigl(\bu+\gamma(-\Delta_h\bu)\bigr)
=\bbG_X(\bz^+-\bz)
=(1-\norm{\bz}_2)\bu-\gamma\nabla\bE_h(\bu),
\]
and moving the second term of the left side to the right side, with
$(1-\norm{\bz}_2)-(\norm{\bz^+}_2-\norm{\bz}_2)=1-\norm{\bz^+}_2$,
we get \eqref{n:stepdelta}. For
\eqref{n:radialid}, we take the $\langle\cdot,\cdot\rangle_h$-product
of the second identity of \eqref{d2:step} with $\bu$ and use
$\langle\bg(\bu),\bu\rangle_h=0$, the decomposition \eqref{n:zdecomp},
$\langle\bbG_X\bdelta,\bu\rangle_h=\langle\bu,\bdelta\rangle_h
+\gamma\langle-\Delta_h\bu,\bdelta\rangle_h$, \eqref{n:chord} and
$\langle\bu,\bbG_X\bu\rangle_h=1+\gamma\langle\bu,-\Delta_h\bu\rangle_h$.
\end{proof}

\begin{lemma}[Energy and residual identities]\label{lem:identities}
Let $\bz\neq0$ with $\bz^+\neq0$. Then
\begin{align}
\gamma\bigl(\bE_h(\bu)-\bE_h(\bu^+)\bigr)
&=\frac{\norm{\bz^+}_2+1}{2}\norm{\bdelta}_2^2
+\gamma\Bigl(\norm{\bz^+}_2-\frac12\Bigr)\langle\bdelta,-\Delta_h\bdelta\rangle_h
\notag\\
&\qquad
+\gamma\bigl(\norm{\bz^+}_2-\norm{\bz}_2\bigr)\langle-\Delta_h\bu,\bdelta\rangle_h
-\gamma\,\calT(\bu,\bdelta),
\label{n:energyid}\\
R^{+}&=-\tfrac12\norm{\bdelta}_2^2
-\gamma\bigl(\norm{\bz^+}_2-\norm{\bz}_2\bigr)\langle-\Delta_h\bu,\bu^+\rangle_h
\notag\\
&\qquad
-\gamma\bigl(\norm{\bz^+}_2-1\bigr)\langle-\Delta_h\bdelta,\bu^+\rangle_h
+\gamma\langle\bn(\bu^+)-\bn(\bu),\bu^+\rangle_h .
\label{n:residid}
\end{align}
\end{lemma}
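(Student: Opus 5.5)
Both identities follow from Lemma~\ref{lem:step} by taking one $\langle\cdot,\cdot\rangle_h$-product with the right test vector and using three facts: the chord relation \eqref{n:chord}, the Taylor expansion \eqref{n:taylorE}, and the $h$-self-adjointness of $-\Delta_h$ and $\bbG_X=\bbI-\gamma\Delta_h$. No estimates are needed; everything is exact algebra.

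For the energy identity \eqref{n:energyid}, I take the $\langle\cdot,\cdot\rangle_h$-product of \eqref{n:stepdelta} with $\bdelta$.

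The left side becomes
\[
\norm{\bz^+}_2\langle\bbG_X\bdelta,\bdelta\rangle_h
=\norm{\bz^+}_2\bigl(\norm{\bdelta}_2^2+\gamma\langle\bdelta,-\Delta_h\bdelta\rangle_h\bigr).
\]
On the right side, the first term is $(1-\norm{\bz^+}_2)\langle\bu,\bdelta\rangle_h=-\tfrac12(1-\norm{\bz^+}_2)\norm{\bdelta}_2^2$ by \eqref{n:chord}. The remaining two terms are $-\gamma\langle\nabla\bE_h(\bu),\bdelta\rangle_h$ and $-\gamma(\norm{\bz^+}_2-\norm{\bz}_2)\langle-\Delta_h\bu,\bdelta\rangle_h$.

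I solve this for $\gamma\langle\nabla\bE_h(\bu),\bdelta\rangle_h$ and substitute it into $\gamma$ times \eqref{n:taylorE}, i.e.
\[
\gamma\bigl(\bE_h(\bu^+)-\bE_h(\bu)\bigr)=\gamma\langle\nabla\bE_h(\bu),\bdelta\rangle_h+\tfrac\gamma2\langle\bdelta,-\Delta_h\bdelta\rangle_h+\gamma\calT(\bu,\bdelta).
\]
Collecting terms gives the coefficients in \eqref{n:energyid}. The $\norm{\bdelta}_2^2$ coefficient is $\norm{\bz^+}_2+\tfrac12(1-\norm{\bz^+}_2)=\tfrac12(\norm{\bz^+}_2+1)$. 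The kinetic coefficient is $\gamma(\norm{\bz^+}_2-\tfrac12)$. The cross term $\gamma(\norm{\bz^+}_2-\norm{\bz}_2)\langle-\Delta_h\bu,\bdelta\rangle_h$ and the term $-\gamma\calT$ carry over unchanged.

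For the residual identity \eqref{n:residid}, I use the first identity of \eqref{d2:step} rather than \eqref{n:stepdelta}, and take its product with $\bu^+$.

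Since $\bz^+=\norm{\bz^+}_2\bu^+$ and $\norm{\bu^+}_2=1$, the left side is
\[
\norm{\bz^+}_2\bigl(1+\gamma\langle-\Delta_h\bu^+,\bu^+\rangle_h\bigr).
\]
On the right side, $\langle\bu,\bu^+\rangle_h=1-\tfrac12\norm{\bdelta}_2^2$ by \eqref{n:chord}. This yields
\[
\norm{\bz^+}_2-1+\gamma\norm{\bz^+}_2\langle-\Delta_h\bu^+,\bu^+\rangle_h
=-\tfrac12\norm{\bdelta}_2^2-\gamma\langle\bn(\bu),\bu^+\rangle_h-\gamma(1-\norm{\bz}_2)\langle-\Delta_h\bu,\bu^+\rangle_h .
\]
Next I write $R^+=\norm{\bz^+}_2-1+\gamma\langle-\Delta_h\bu^+,\bu^+\rangle_h+\gamma\langle\bn(\bu^+),\bu^+\rangle_h$, using $\lambda(\bu^+)=\langle\nabla\bE_h(\bu^+),\bu^+\rangle_h$. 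Inserting the previous display gives
\[
R^+=-\tfrac12\norm{\bdelta}_2^2+\gamma\langle\bn(\bu^+)-\bn(\bu),\bu^+\rangle_h-\gamma(1-\norm{\bz}_2)\langle-\Delta_h\bu,\bu^+\rangle_h+\gamma(1-\norm{\bz^+}_2)\langle-\Delta_h\bu^+,\bu^+\rangle_h .
\]

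The final step is to split $\langle-\Delta_h\bu^+,\bu^+\rangle_h=\langle-\Delta_h\bu,\bu^+\rangle_h+\langle-\Delta_h\bdelta,\bu^+\rangle_h$.
\begin{itemize}
\item The two terms containing $\langle-\Delta_h\bu,\bu^+\rangle_h$ combine with coefficient $(1-\norm{\bz^+}_2)-(1-\norm{\bz}_2)=-(\norm{\bz^+}_2-\norm{\bz}_2)$.
\item The $\bdelta$ part gives $-\gamma(\norm{\bz^+}_2-1)\langle-\Delta_h\bdelta,\bu^+\rangle_h$.
\end{itemize}
Together with the first two terms, this is exactly \eqref{n:residid}.

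There is no real obstacle here. The only point of care is the choice of test vectors: $\bdelta$ in \eqref{n:stepdelta} for the energy, and $\bu^+$ in the first identity of \eqref{d2:step} for the residual. The second choice is what makes the leading kinetic part $\gamma\langle-\Delta_h\bu^+,\bu^+\rangle_h$ of $\gamma\lambda(\bu^+)$ cancel, leaving only increments multiplied by radial deviations.
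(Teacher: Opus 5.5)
Your proposal is correct and follows the paper's proof: you take the $\langle\cdot,\cdot\rangle_h$-product of \eqref{n:stepdelta} with $\bdelta$ and eliminate $\gamma\langle\nabla\bE_h(\bu),\bdelta\rangle_h$ against \eqref{n:taylorE}, then take the product of the first identity of \eqref{d2:step} with $\bu^+$, add $\gamma\lambda(\bu^+)$, and split $\langle-\Delta_h\bu^+,\bu^+\rangle_h=\langle-\Delta_h\bu,\bu^+\rangle_h+\langle-\Delta_h\bdelta,\bu^+\rangle_h$. The coefficients you collect check out exactly.
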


\begin{proof}
Taking the $\langle\cdot,\cdot\rangle_h$-product of
\eqref{n:stepdelta} with $\bdelta$ and using \eqref{n:chord}, we have
\[
\norm{\bz^+}_2\bigl(\norm{\bdelta}_2^2
+\gamma\langle\bdelta,-\Delta_h\bdelta\rangle_h\bigr)
=-\frac{1-\norm{\bz^+}_2}{2}\norm{\bdelta}_2^2
-\gamma\langle\nabla\bE_h(\bu),\bdelta\rangle_h
-\gamma\bigl(\norm{\bz^+}_2-\norm{\bz}_2\bigr)
\langle-\Delta_h\bu,\bdelta\rangle_h .
\]
Eliminating $\gamma\langle\nabla\bE_h(\bu),\bdelta\rangle_h$ between
the equation above and \eqref{n:taylorE} multiplied by $\gamma$, we
obtain \eqref{n:energyid}. For \eqref{n:residid},
we take the $\langle\cdot,\cdot\rangle_h$-product of the first
identity of \eqref{d2:step} with $\bu^+$. Since
$\bz^+=\norm{\bz^+}_2\bu^+$, we have
$\langle\bbG_X\bz^+,\bu^+\rangle_h
=\norm{\bz^+}_2\langle\bbG_X\bu^+,\bu^+\rangle_h
=\norm{\bz^+}_2+\norm{\bz^+}_2\gamma\langle\bu^+,-\Delta_h\bu^+\rangle_h$,
while $\langle\bu,\bu^+\rangle_h=1-\frac12\norm{\bdelta}_2^2$, thus
\[
\norm{\bz^+}_2-1=-\tfrac12\norm{\bdelta}_2^2
+\gamma\bigl(\norm{\bz}_2-1\bigr)\langle-\Delta_h\bu,\bu^+\rangle_h
-\gamma\langle\bn(\bu),\bu^+\rangle_h
-\norm{\bz^+}_2\gamma\langle\bu^+,-\Delta_h\bu^+\rangle_h .
\]
Adding $\gamma\lambda(\bu^+)=\gamma\langle\bu^+,-\Delta_h\bu^+\rangle_h
+\gamma\langle\bn(\bu^+),\bu^+\rangle_h$, which follows from
\eqref{d2:grad} and \eqref{n:ndef}, to both sides, and writing
$\norm{\bz}_2-1=(\norm{\bz^+}_2-1)-(\norm{\bz^+}_2-\norm{\bz}_2)$ and
$\langle-\Delta_h\bu^+,\bu^+\rangle_h=\langle-\Delta_h\bu,\bu^+\rangle_h
+\langle-\Delta_h\bdelta,\bu^+\rangle_h$, we get \eqref{n:residid}.
\end{proof}

\subsection{Estimates for the explicit part}\label{ssec:explicit}

\begin{lemma}[The explicit part on an energy sublevel]\label{lem:nonlinear}
Let (C3) in \eqref{n:cond} hold, and assume $\bz\neq0$, $\bz^+\neq0$,
$\bE_h(\bu)\le E_0$ and $\bE_h(\bu^+)\le E_0$. Then
\begin{equation}
\gamma\,\calT(\bu,\bdelta)\le\tfrac1{16}\norm{\bdelta}_{\calG}^2,
\qquad
\gamma\bigl|\langle\bn(\bu^+)-\bn(\bu),\bu^+\rangle_h\bigr|
\le3\gamma B_0\norm{\bdelta}_2\le3\gamma D\,\norm{\bdelta}_{\calG} .
\label{n:nonlinear}
\end{equation}
\end{lemma}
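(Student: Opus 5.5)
The plan is to prove the two estimates separately. Both rest on the bounds of Lemma~\ref{lem:ebd0}, applied to $\bu$ and to $\bu^+$, which is allowed since both have energy at most $E_0$. They also use $\norm{\cdot}_2\le\norm{\cdot}_{\calG}$ from \eqref{n:Gnorm}.

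\emph{The Taylor remainder.} Write $\bw_\sigma:=\bu+\sigma\bdelta=(1-\sigma)\bu+\sigma\bu^+$ for $\sigma\in[0,1]$. Since $\int_0^1(1-\sigma)\,d\sigma=\tfrac12$, the representation \eqref{n:remainder} gives
\[
\calT(\bu,\bdelta)\le\tfrac12\sup_{\sigma\in[0,1]}\langle\bn'(\bw_\sigma)\bdelta,\bdelta\rangle_h .
\]
So it suffices to show $\gamma\langle\bn'(\bw_\sigma)\bdelta,\bdelta\rangle_h\le\tfrac18\norm{\bdelta}_{\calG}^2$ for each $\sigma$. I would bound the term $3\beta\sum_iw_i(\bw_\sigma)_i^2\delta_i^2$ in two ways, matching the two alternatives in the minimum defining $L_0$ in \eqref{n:LH}.

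The first way is pointwise. Convexity gives $(\bw_\sigma)_i^2\le\max\{u_i^2,(u^+_i)^2\}\le U_0$ by Lemma~\ref{lem:ebd0}(ii). Hence
\[
\langle\bn'(\bw_\sigma)\bdelta,\bdelta\rangle_h\le(\norm{V}_\infty+3\beta U_0)\norm{\bdelta}_2^2 .
\]

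The second way uses Sobolev. H\"older with exponents $(3,6,2)$ gives $\sum_iw_iw_i^2\delta_i^2\le\norm{\bw_\sigma}_6^2\norm{\bdelta}_6\norm{\bdelta}_2$. Convexity of the norm and Lemma~\ref{lem:ebd0}(i) give $\norm{\bw_\sigma}_6^2\le P_0$, and \eqref{sobolev} gives $\norm{\bdelta}_6\le C_S\norm{\bdelta}_{1,h}$. Together,
\[
3\beta\sum_iw_i(\bw_\sigma)_i^2\delta_i^2\le C_0\norm{\bdelta}_{1,h}\norm{\bdelta}_2 .
\]
Set $a:=\norm{\bdelta}_2$ and $k:=\langle\bdelta,-\Delta_h\bdelta\rangle_h$, so that $\norm{\bdelta}_{1,h}\le a+\sqrt k$. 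Then $C_0a(a+\sqrt k)$ must be absorbed into $a^2$ plus a multiple of $\gamma k/\gamma$. I would use Young's inequality on $C_0a\sqrt k$, splitting off a piece $\tfrac14 a^2$ (this explains $\max\{C_0-\tfrac14,0\}$). The result should be a bound of the form $L_0a^2+\tfrac{1}{8\gamma}\gamma k$ after multiplying by $\gamma$ and using (C1) and (C3).

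In either case, (C3), $\gamma L_0\le\tfrac18$, then yields $\gamma\langle\bn'\bdelta,\bdelta\rangle_h\le\tfrac18(a^2+\gamma k)=\tfrac18\norm{\bdelta}_{\calG}^2$. With the factor $\tfrac12$ this gives $\gamma\calT(\bu,\bdelta)\le\tfrac1{16}\norm{\bdelta}_{\calG}^2$.

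\emph{The residual term.} The identity $(u^+_i)^3-u_i^3=\delta_i\bigl((u_i^+)^2+u_iu^+_i+u_i^2\bigr)$ gives
\[
\langle\bn(\bu^+)-\bn(\bu),\bu^+\rangle_h=\langle\bbV\bdelta,\bu^+\rangle_h+\beta\bigl\langle\bdelta,\bigl((\bu^+)^2+\bu\bu^++\bu^2\bigr)\bu^+\bigr\rangle_h ,
\]
with entrywise products. By Cauchy--Schwarz, the first term is at most $\norm{V}_\infty\norm{\bdelta}_2$. The second is at most $\beta\norm{\bdelta}_2$ times the sum of the $L^2$ norms of the three cubic monomials. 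Each monomial norm is bounded by $\min\{P_0^{3/2},U_0\}$: via H\"older with three $L^6$ factors, or via $\norm{\cdot}_\infty^2\norm{\cdot}_2$ with Lemma~\ref{lem:ebd0}(ii). Hence the whole expression is at most $(\norm{V}_\infty+3\beta\min\{P_0^{3/2},U_0\})\norm{\bdelta}_2\le3B_0\norm{\bdelta}_2$. Multiplying by $\gamma$ and using $B_0\le D$ and $\norm{\bdelta}_2\le\norm{\bdelta}_{\calG}$ gives the second bound.

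The main obstacle is the Sobolev branch of the first estimate. There, $\norm{\bdelta}_{1,h}$ carries the full kinetic part $k$ without a factor $\gamma$, so the Young splitting has to be arranged so that the $k$-contribution appears with coefficient $\gamma$. That is where the specific form $C_0+2\max\{C_0-\tfrac14,0\}^2$ in $L_0$ comes from, and I expect to tune that splitting against that formula.
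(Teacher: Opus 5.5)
Your pointwise branch and your proof of the second estimate are correct and match the paper's. Your single H\"older step with exponents $(3,6,2)$ gives the same bound that the paper reaches via exponents $(3,\tfrac32)$ and $\norm{\bdelta}_3^2\le\norm{\bdelta}_2\norm{\bdelta}_6$.

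\textbf{The gap} is the Sobolev branch, the step you flagged. The route you propose there cannot produce the constant that appears in $L_0$. Once you use $\norm{\bdelta}_{1,h}\le a+\sqrt k$, the best Young splitting of $C_0a\sqrt k$ against $\tfrac18 k$ is $C_0a\sqrt k\le 2C_0^2a^2+\tfrac18k$. You therefore end with $(C_0+2C_0^2)a^2+\tfrac18k$.

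Now $C_0+2C_0^2$ exceeds $C_0+2\max\{C_0-\tfrac14,0\}^2$ by $C_0-\tfrac18$ when $C_0\ge\tfrac14$, and by $2C_0^2$ otherwise. So when $L_0$ is attained by its Sobolev alternative, (C3) does not give $\gamma(\norm{V}_\infty+C_0+2C_0^2)\le\tfrac18$, and the argument does not close.

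The loss happens in $\sqrt{a^2+k}\le a+\sqrt k$ itself, which is strict for $k>0$. No retuning of Young's inequality after that step can recover the constant. Also, the term $\max\{C_0-\tfrac14,0\}$ does not come from ``splitting off $\tfrac14 a^2$''. Finally, (C1), which you invoke, is neither needed nor among the lemma's hypotheses.

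\textbf{The fix} is to keep $\norm{\bdelta}_{1,h}=\sqrt{a^2+k}$ whole.

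The paper sets $t:=\norm{\bdelta}_{1,h}/\norm{\bdelta}_2\ge1$, so that $k=(t^2-1)a^2$. It then maximizes the concave function $C_0t-\tfrac{t^2-1}{8}$ over $t\ge1$. The maximizer is $t=\max\{4C_0,1\}$ and the maximum is $C_0+2\max\{C_0-\tfrac14,0\}^2$. The $\max\{\cdot,0\}$ comes precisely from the constraint $t\ge1$.

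Equivalently, apply AM--GM to the product $a\cdot\sqrt{a^2+k}$ with weight $s=\max\{4C_0,1\}$:
\[
C_0a\sqrt{a^2+k}\le\tfrac{C_0s}{2}a^2+\tfrac{C_0}{2s}\bigl(a^2+k\bigr).
\]
For $s=4C_0$ (the case $C_0\ge\tfrac14$) this gives $(2C_0^2+\tfrac18)a^2+\tfrac18k$, and $2C_0^2+\tfrac18=C_0+2(C_0-\tfrac14)^2$. For $s=1$ (the case $C_0<\tfrac14$) it gives $C_0a^2+\tfrac{C_0}{2}k\le C_0a^2+\tfrac18k$.

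Either way you get $\langle\bn'(\bw_\sigma)\bdelta,\bdelta\rangle_h\le L_0a^2+\tfrac18k$ in the Sobolev case. Multiplying by $\gamma$ and using (C3) alone gives $\tfrac18(a^2+\gamma k)=\tfrac18\norm{\bdelta}_{\calG}^2$. No coefficient $\gamma$ or $1/\gamma$ has to be engineered into the splitting.
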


\begin{proof}
For $0\le\sigma\le1$, the triangle inequality applied to
$\bu+\sigma\bdelta=(1-\sigma)\bu+\sigma\bu^+$ and Lemma~\ref{lem:ebd0}
at $\bu$ and $\bu^+$ give
$\norm{\bu+\sigma\bdelta}_\infty\le(1-\sigma)\norm{\bu}_\infty
+\sigma\norm{\bu^+}_\infty\le\sqrt{U_0}$ and, in the same way,
$\norm{\bu+\sigma\bdelta}_6\le\sqrt{P_0}$. By the definition of
$\bn'$ in \eqref{n:remainder}, it follows that
\[
\langle\bn'(\bu+\sigma\bdelta)\bdelta,\bdelta\rangle_h
=\sum_iw_iV_i\delta_i^2+3\beta\sum_iw_i(u_i+\sigma\delta_i)^2\delta_i^2
\le\norm{V}_\infty\norm{\bdelta}_2^2
+3\beta\sum_iw_i(u_i+\sigma\delta_i)^2\delta_i^2 .
\]
The last sum is bounded in two ways. First,
$\sum_iw_i(u_i+\sigma\delta_i)^2\delta_i^2\le U_0\norm{\bdelta}_2^2$,
which gives
\begin{equation}
\langle\bn'(\bu+\sigma\bdelta)\bdelta,\bdelta\rangle_h
\le(\norm{V}_\infty+3\beta U_0)\norm{\bdelta}_2^2 .
\label{n:pointwisebound}
\end{equation}
Second,
H\"older's inequality with exponents $3$ and $\frac32$ gives
$\sum_iw_i(u_i+\sigma\delta_i)^2\delta_i^2
\le\norm{\bu+\sigma\bdelta}_6^2\norm{\bdelta}_3^2
\le P_0\norm{\bdelta}_3^2$,
and H\"older's inequality with exponents $\frac43$ and $4$ gives
\[
\norm{\bdelta}_3^3=\sum_iw_i|\delta_i|^{3/2}|\delta_i|^{3/2}
\le\norm{\bdelta}_2^{3/2}\norm{\bdelta}_6^{3/2},
\qquad\text{i.e.,}\qquad
\norm{\bdelta}_3^2\le\norm{\bdelta}_2\norm{\bdelta}_6 .
\]
With the Sobolev embedding \eqref{sobolev},
$\norm{\bdelta}_6\le C_S\norm{\bdelta}_{1,h}$, we get
\begin{align}
\langle\bn'(\bu+\sigma\bdelta)\bdelta,\bdelta\rangle_h
&\le\norm{V}_\infty\norm{\bdelta}_2^2+C_0\norm{\bdelta}_2\norm{\bdelta}_{1,h}
\notag\\
&\le\bigl(\norm{V}_\infty+C_0+2\max\{C_0-\tfrac14,0\}^2\bigr)\norm{\bdelta}_2^2
+\tfrac18\langle\bdelta,-\Delta_h\bdelta\rangle_h .
\label{n:sobolevbound}
\end{align}
For the last step, let $\bdelta\neq0$ and
$a:=\norm{\bdelta}_{1,h}/\norm{\bdelta}_2\ge1$. Then
$\langle\bdelta,-\Delta_h\bdelta\rangle_h=(a^2-1)\norm{\bdelta}_2^2$
by the definition \eqref{sobolev} of $\norm{\cdot}_{1,h}$, and
$C_0\norm{\bdelta}_2\norm{\bdelta}_{1,h}
-\tfrac18\langle\bdelta,-\Delta_h\bdelta\rangle_h
=\bigl(C_0a-\tfrac{a^2-1}{8}\bigr)\norm{\bdelta}_2^2$. The concave
quadratic $a\mapsto C_0a-\frac{a^2-1}8$ attains its maximum over
$a\ge1$ at $a=\max\{4C_0,1\}$, where it equals
$C_0+2\max\{C_0-\frac14,0\}^2$. By \eqref{n:LH}, $L_0$ is the
smaller of the two numbers $\norm{V}_\infty+3\beta U_0$ and
$\norm{V}_\infty+C_0+2\max\{C_0-\tfrac14,0\}^2$, thus (C3), i.e.,
$\gamma L_0\le\frac18$, means that
$\gamma(\norm{V}_\infty+3\beta U_0)\le\frac18$ if
$3\beta U_0\le C_0+2\max\{C_0-\tfrac14,0\}^2$, and that
$\gamma\bigl(\norm{V}_\infty+C_0+2\max\{C_0-\tfrac14,0\}^2\bigr)\le\frac18$
otherwise.
In the first case \eqref{n:pointwisebound} multiplied by $\gamma$
gives
$\gamma\langle\bn'(\bu+\sigma\bdelta)\bdelta,\bdelta\rangle_h
\le\frac18\norm{\bdelta}_2^2$, and in the second case
\eqref{n:sobolevbound} multiplied by $\gamma$ gives
$\gamma\langle\bn'(\bu+\sigma\bdelta)\bdelta,\bdelta\rangle_h
\le\frac18\norm{\bdelta}_2^2
+\frac18\gamma\langle\bdelta,-\Delta_h\bdelta\rangle_h$. In both
cases
$\gamma\langle\bn'(\bu+\sigma\bdelta)\bdelta,\bdelta\rangle_h
\le\frac18\norm{\bdelta}_{\calG}^2$ by \eqref{n:Gnorm}.
Integrating against $(1-\sigma)$
in \eqref{n:remainder} halves this bound, which is the first
inequality. For the second bound, \eqref{n:ndef} and the
componentwise factorization
$(\bu^+)^3-\bu^3=\bigl((\bu^+)^2+\bu\bu^++\bu^2\bigr)\bdelta$ give
$\bn(\bu^+)-\bn(\bu)=\bbV\bdelta+\beta\bigl((\bu^+)^2+\bu\bu^++\bu^2\bigr)\bdelta$,
and since componentwise products commute,
\[
\langle\bn(\bu^+)-\bn(\bu),\bu^+\rangle_h
=\langle\bdelta,\bbV\bu^+\rangle_h
+\beta\langle\bdelta,(\bu^+)^3\rangle_h
+\beta\langle\bdelta,\bu(\bu^+)^2\rangle_h
+\beta\langle\bdelta,\bu^2\bu^+\rangle_h .
\]
By the Cauchy--Schwarz inequality and
$\norm{\bbV\bu^+}_2\le\norm{V}_\infty\norm{\bu^+}_2=\norm{V}_\infty$,
\[
\bigl|\langle\bn(\bu^+)-\bn(\bu),\bu^+\rangle_h\bigr|
\le\norm{\bdelta}_2\bigl(\norm{V}_\infty+\beta\norm{(\bu^+)^3}_2
+\beta\norm{\bu(\bu^+)^2}_2+\beta\norm{\bu^2\bu^+}_2\bigr).
\]
Each of the three mixed cubes is bounded in two ways. By H\"older's
inequality with exponents $3$ and $\frac32$ and Lemma~\ref{lem:ebd0}(i)
at $\bu$ and $\bu^+$,
\[
\norm{\bu(\bu^+)^2}_2^2=\sum_iw_iu_i^2(u_i^+)^4
\le\Bigl(\sum_iw_iu_i^6\Bigr)^{1/3}\Bigl(\sum_iw_i(u_i^+)^6\Bigr)^{2/3}
=\norm{\bu}_6^2\norm{\bu^+}_6^4\le P_0^3,
\]
and in the same way
$\norm{\bu^2\bu^+}_2^2\le\norm{\bu}_6^4\norm{\bu^+}_6^2\le P_0^3$ and
$\norm{(\bu^+)^3}_2^2=\norm{\bu^+}_6^6\le P_0^3$. By
Lemma~\ref{lem:ebd0}(ii) at $\bu$ and $\bu^+$ and
$\norm{\bu}_2=\norm{\bu^+}_2=1$,
\begin{gather*}
\norm{\bu(\bu^+)^2}_2\le\norm{\bu}_\infty\norm{\bu^+}_\infty\norm{\bu^+}_2\le U_0,
\qquad
\norm{\bu^2\bu^+}_2\le\norm{\bu}_\infty\norm{\bu^+}_\infty\norm{\bu}_2\le U_0,
\\
\norm{(\bu^+)^3}_2\le\norm{\bu^+}_\infty^2\norm{\bu^+}_2\le U_0.
\end{gather*}
Thus each mixed cube is at most $\min\{P_0^{3/2},U_0\}$, the bracket is
at most $\norm{V}_\infty+3\beta\min\{P_0^{3/2},U_0\}\le3B_0$, and the
second bound follows from $B_0\le D$ and
$\norm{\bdelta}_2\le\norm{\bdelta}_{\calG}$.
\end{proof}

\subsection{Positivity and the radial interval}\label{ssec:steppos}

\begin{lemma}[Positivity and the radial interval]\label{lem:posradius}
Let Assumption~\ref{asp:struct} and (C2), (C4) in \eqref{n:cond}
hold, and assume $\bz\neq0$ and
\begin{equation}
\bu>0,\qquad \bE_h(\bu)\le E_0,\qquad 1-3\gamma D\le\norm{\bz}_2\le1 .
\label{n:hyp}
\end{equation}
Then $\bz^+>0$ componentwise, and
\begin{equation}
1-3\gamma D\le\langle\bu,\bz^+\rangle_h\le\norm{\bz^+}_2\le1,
\qquad
\norm{\bdelta}_2^2\le2\bigl(1-\langle\bu,\bz^+\rangle_h\bigr)\le6\gamma D,
\qquad
|R|\le3\gamma D .
\label{n:radius}
\end{equation}
\end{lemma}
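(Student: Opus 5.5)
The plan is to work with the first step identity of Lemma~\ref{lem:step} rewritten in a form where $\bbG_X^{-1}$ acts only on nonnegative vectors. Set $a:=1-\norm{\bz}_2$, so $0\le a\le3\gamma D$ by \eqref{n:hyp}. Since $\gamma(-\Delta_h\bu)=\bbG_X\bu-\bu$, the first identity of \eqref{d2:step} reads $\bbG_X\bz^+=(1+a)\bu-a\bbG_X\bu-\gamma\bn(\bu)$. Hence
\[
\bz^+=\bbG_X^{-1}\bigl(\bu-\gamma\bn(\bu)-\gamma a(-\Delta_h\bu)\bigr)
=(1+a)\bbG_X^{-1}\bu-a\bu-\gamma\bbG_X^{-1}\bn(\bu).
\]
I will use the first form for positivity and the second for the upper bound on $\norm{\bz^+}_2$.

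\emph{Positivity.} Write $\mathbf r:=\bu-\gamma\bn(\bu)-\gamma a(-\Delta_h\bu)$. Because $\bbS_{ij}\le0$ for $i\ne j$ and $\bu>0$, we have $(-\Delta_h\bu)_i=w_i^{-1}\sum_j\bbS_{ij}u_j\le(\bbS_{ii}/w_i)u_i\le\Lambda_{\mathrm d}u_i$. Also $\bn(\bu)_i=(V_i+\beta u_i^2)u_i\le(\norm{V}_\infty+\beta U_0)u_i=B_1u_i$ by Lemma~\ref{lem:ebd0}(ii). Together these give
\[
r_i\ge u_i\bigl(1-\gamma B_1-3\gamma^2D\Lambda_{\mathrm d}\bigr)\ge0
\]
by (C4). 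Since $\bbG_X^{-1}=(\bbM+\gamma\bbS)^{-1}\bbM$ has strictly positive entries (Lemma~\ref{lem:disc}(ii)), $\bz^+>0$ follows once $\mathbf r\ne0$. This nondegeneracy comes for free from the lower bound $\langle\bu,\bz^+\rangle_h>0$ proved next, so in the equality case of (C4) I will argue that step first.

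\emph{Lower bound.} From the second form, $\bz^+=\bu-\gamma\bbG_X^{-1}\bigl[(1+a)(-\Delta_h\bu)+\bn(\bu)\bigr]$, using $\bbG_X^{-1}\bu=\bu-\gamma\bbG_X^{-1}(-\Delta_h\bu)$. Taking the product with $\bu$ and using that $\bbG_X^{-1}$ is $h$-self-adjoint and commutes with $-\Delta_h$:
\begin{itemize}
\item $\langle\bbG_X^{-1}\bu,-\Delta_h\bu\rangle_h=\sum_j\mu_j\hat u_j^2/(1+\gamma\mu_j)\le2E_0$, by Lemma~\ref{lem:ebd0}(i);
\item $|\langle\bbG_X^{-1}\bu,\bn(\bu)\rangle_h|\le\norm{\bbG_X^{-1}\bu}_2\norm{\bn(\bu)}_2\le B_0$, by \eqref{d2:Gbounds} and Lemma~\ref{lem:ebd0}(iii).
\end{itemize}
This gives $\langle\bu,\bz^+\rangle_h\ge1-\gamma\bigl(2E_0(1+3\gamma D)+B_0\bigr)$. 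By (C2), $6\gamma DE_0\le E_0$, so the bracket is at most $3E_0+B_0\le3D$, and therefore $\langle\bu,\bz^+\rangle_h\ge1-3\gamma D>0$. The middle inequality $\langle\bu,\bz^+\rangle_h\le\norm{\bz^+}_2$ is Cauchy--Schwarz.

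\emph{Upper bound $\norm{\bz^+}_2\le1$.} This is the step I expect to be delicate, because $-\Delta_h\bu$ has no sign. The trick is the second form of $\bz^+$. Since $\bu>0$ and $\bn(\bu)\ge0$, the positivity of $\bbG_X^{-1}$ gives $\gamma\bbG_X^{-1}\bn(\bu)\ge0$. Combined with $\bz^+>0$, this yields $0<\bz^+\le\mathbf y:=(1+a)\bbG_X^{-1}\bu-a\bu$ componentwise. The norm $\norm{\cdot}_2$ is a weighted Euclidean norm, hence monotone on nonnegative vectors, so $\norm{\bz^+}_2\le\norm{\mathbf y}_2$. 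In the $\langle\cdot,\cdot\rangle_h$-orthonormal eigenbasis of $-\Delta_h$, the coefficients of $\mathbf y$ are $\bigl(\frac{1+a}{1+\gamma\mu_j}-a\bigr)\hat u_j$. The factor decreases from $1$ (at $\mu_j=0$) toward $-a\ge-1$, since $a\le3\gamma D<1$ by (C2). Its modulus is therefore at most $1$, so $\norm{\mathbf y}_2\le\norm{\bu}_2=1$.

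\emph{Remaining bounds.} The bound on $\norm{\bdelta}_2^2$ is \eqref{n:chord} combined with $\norm{\bz^+}_2\le1$ (so $\langle\bu,\bu^+\rangle_h\ge\langle\bu,\bz^+\rangle_h$) and the lower bound just proved. Finally, $R=-a+\gamma\lambda(\bu)$ with $a\in[0,3\gamma D]$ and $\gamma\lambda(\bu)\in[0,2\gamma D]$ by Lemma~\ref{lem:ebd0}(iv), which gives $|R|\le3\gamma D$.
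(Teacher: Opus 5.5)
Your proposal is correct and follows essentially the same route as the paper. You use the same sign bound on $\bbG_X\bz^+$ via (C4) together with the entrywise positive resolvent, the same domination $0<\bz^+\le\bigl((1+a)\bbG_X^{-1}-a\bbI\bigr)\bu$ with the spectral bound giving $\norm{\bz^+}_2\le1$, and the same pairing with $\bu$ for the lower bound (your term $\gamma\langle\bu,\bbG_X^{-1}(-\Delta_h\bu)\rangle_h$ is exactly the paper's $1-\norm{\bu}_X^2$). The one difference is how you exclude $\bz^+=0$. You do it through the lower bound $\langle\bu,\bz^+\rangle_h\ge1-3\gamma D>0$, which does not depend on positivity. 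The paper instead uses a separate pairing, $\langle\bu,\bbG_X\bz^+\rangle_h>0$. Both arguments work.
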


\begin{proof}
We first prove the positivity of $\bz^+$. By the first identity of
\eqref{d2:step},
$\bbG_X\bz^+=\bu-\gamma\bn(\bu)-\gamma(1-\norm{\bz}_2)(-\Delta_h\bu)$,
and we bound its components from below. By \eqref{n:ndef},
$n_i(\bu)=(V_i+\beta u_i^2)u_i$, and $V_i\le\norm{V}_\infty$,
$u_i^2\le\norm{\bu}_\infty^2\le U_0$ by Lemma~\ref{lem:ebd0}(ii) and
$u_i>0$ give $n_i(\bu)\le(\norm{V}_\infty+\beta U_0)u_i=B_1u_i$ by
\eqref{n:BD}. Since $-\Delta_h=\bbM^{-1}\bbS$ with
$\bbM=\diag\{w_1,\dots,w_N\}$, we have
$(-\Delta_h\bu)_i=w_i^{-1}\bigl(\bbS_{ii}u_i+\sum_{j\neq i}\bbS_{ij}u_j\bigr)
\le w_i^{-1}\bbS_{ii}u_i\le\Lambda_{\mathrm d}u_i$, since
$\bbS_{ij}\le0$ for $j\neq i$ by Lemma~\ref{lem:disc}(ii),
$\bu>0$, and $\Lambda_{\mathrm d}=\max_i\bbS_{ii}/w_i$. With
$0\le1-\norm{\bz}_2\le3\gamma D$ from \eqref{n:hyp}, we get
\[
(\bbG_X\bz^+)_i=u_i-\gamma n_i(\bu)-\gamma(1-\norm{\bz}_2)(-\Delta_h\bu)_i
\ \ge\ u_i-\gamma B_1u_i-3\gamma^2D\Lambda_{\mathrm d}u_i
=\bigl(1-\gamma B_1-3\gamma^2D\Lambda_{\mathrm d}\bigr)u_i
\ \ge\ 0
\]
for every $i$, by (C4), i.e., $\bbG_X\bz^+\ge0$. Since
$\bbG_X^{-1}=(\bbM+\gamma\bbS)^{-1}\bbM$ has strictly positive entries
by Lemma~\ref{lem:disc}(ii) and $w_i>0$, and a matrix with strictly
positive entries maps every nonnegative nonzero vector to a positive
vector, $\bz^+=\bbG_X^{-1}(\bbG_X\bz^+)>0$ whenever $\bz^+\neq0$.
Suppose that $\bz^+=0$. Taking the
$\langle\cdot,\cdot\rangle_h$-product of the first identity of
\eqref{d2:step} with $\bu$ and using $\norm{\bu}_2=1$,
$\langle\bn(\bu),\bu\rangle_h\le B_0\le D$ by Lemma~\ref{lem:ebd0}(iii),
$1-\norm{\bz}_2\le3\gamma D$ and
$\langle\bu,-\Delta_h\bu\rangle_h\le2E_0\le2D$ by Lemma~\ref{lem:ebd0}(i),
we get
\[
0=\langle\bu,\bbG_X\bz^+\rangle_h=1-\gamma\langle\bn(\bu),\bu\rangle_h
-\gamma(1-\norm{\bz}_2)\langle\bu,-\Delta_h\bu\rangle_h
\ge1-\gamma D-6\gamma^2D^2>0,
\]
where the last step holds since $\gamma D\le\frac1{128}$ by (C2), a
contradiction. Thus $\bz^+\neq0$ and $\bz^+>0$.

Next we prove the upper bound $\norm{\bz^+}_2\le1$. By \eqref{dys},
$\bz^+=\bz-\bu+\bxit$ with
$\bxit=\bbG_X^{-1}\bigl(2\bu-\bz-\gamma\bn(\bu)\bigr)$, and
$\bz=\norm{\bz}_2\bu$ gives $\bz-\bu=(\norm{\bz}_2-1)\bu$ and
$2\bu-\bz=(2-\norm{\bz}_2)\bu$, thus
\begin{equation}
\bz^+=(\norm{\bz}_2-1)\bu+(2-\norm{\bz}_2)\bbG_X^{-1}\bu
-\gamma\bbG_X^{-1}\bn(\bu).
\label{n:zplus}
\end{equation}
Since $V_i\ge0$ and $\bu>0$, we have $\bn(\bu)\ge0$ by
\eqref{n:ndef}, and $\bbG_X^{-1}>0$ entrywise as shown above, thus
$\bbG_X^{-1}\bn(\bu)\ge0$ and
$0<\bz^+\le\bigl((\norm{\bz}_2-1)\bbI+(2-\norm{\bz}_2)\bbG_X^{-1}\bigr)\bu$
componentwise. Since the weighted norm $\norm{\cdot}_2$ is monotone on
nonnegative vectors, $\norm{\bz^+}_2$ is at most the norm of the
right side. In the $\langle\cdot,\cdot\rangle_h$-orthonormal
eigenbasis of \S\ref{ssec:scheme}, in which $\bbG_X^{-1}$ has the
eigenvalues $1/(1+\gamma\mu_j)\in(0,1]$, the operator in parentheses
has the eigenvalues
$(\norm{\bz}_2-1)+(2-\norm{\bz}_2)/(1+\gamma\mu_j)$, which lie in
$[\norm{\bz}_2-1,1]\subset[-1,1]$ for $0<\norm{\bz}_2\le1$, since
$2-\norm{\bz}_2>0$. Thus, with $\hat u_j$ the coefficients of $\bu$
in that basis,
\[
\norm{\bz^+}_2^2
\le\sum_j\Bigl((\norm{\bz}_2-1)+\frac{2-\norm{\bz}_2}{1+\gamma\mu_j}\Bigr)^2\hat u_j^2
\le\sum_j\hat u_j^2=\norm{\bu}_2^2=1 .
\]

Finally we prove the lower bound. Taking the
$\langle\cdot,\cdot\rangle_h$-product of \eqref{n:zplus} with $\bu$,
and using $\norm{\bu}_2=1$,
$\langle\bu,\bbG_X^{-1}\bu\rangle_h=\norm{\bu}_X^2$ by
\eqref{d2:Xnorm} and the $h$-self-adjointness of $\bbG_X^{-1}$ in the
last term, we have
$\langle\bu,\bz^+\rangle_h
=(\norm{\bz}_2-1)+(2-\norm{\bz}_2)\norm{\bu}_X^2
-\gamma\langle\bbG_X^{-1}\bu,\bn(\bu)\rangle_h$, and since
$1-(\norm{\bz}_2-1)=2-\norm{\bz}_2$,
\[
1-\langle\bu,\bz^+\rangle_h=(2-\norm{\bz}_2)\bigl(1-\norm{\bu}_X^2\bigr)
+\gamma\langle\bbG_X^{-1}\bu,\bn(\bu)\rangle_h .
\]
Both terms are nonnegative, the first by \eqref{n:resolventfacts}
and the second since $\bbG_X^{-1}\bu>0$ and $\bn(\bu)\ge0$. By
\eqref{n:resolventfacts} and Lemma~\ref{lem:ebd0}(i), we have
$1-\norm{\bu}_X^2\le\gamma\langle\bu,-\Delta_h\bu\rangle_h\le2\gamma E_0$,
and by the Cauchy--Schwarz inequality, \eqref{d2:Gbounds} and
Lemma~\ref{lem:ebd0}(iii),
$\gamma\langle\bbG_X^{-1}\bu,\bn(\bu)\rangle_h
\le\gamma\norm{\bbG_X^{-1}\bu}_2\norm{\bn(\bu)}_2
\le\gamma\norm{\bu}_X\norm{\bn(\bu)}_2\le\gamma B_0$. Therefore, with
$2-\norm{\bz}_2=1+(1-\norm{\bz}_2)$, $1-\norm{\bz}_2\le3\gamma D$,
$2E_0+B_0\le2D$ and $E_0\le D$,
\[
0\le1-\langle\bu,\bz^+\rangle_h
\le\gamma(2E_0+B_0)+2\gamma E_0\,(1-\norm{\bz}_2)
\le2\gamma D+6\gamma^2D^2\le3\gamma D ,
\]
where the last step holds since $6\gamma D\le\frac{6}{128}<1$ by
(C2). This proves the first chain in \eqref{n:radius}, since
$\langle\bu,\bz^+\rangle_h\le\norm{\bu}_2\norm{\bz^+}_2=\norm{\bz^+}_2$
by the Cauchy--Schwarz inequality. Since
$\langle\bu,\bz^+\rangle_h\ge1-3\gamma D>0$ and $\norm{\bz^+}_2\le1$,
we have
$\langle\bu,\bz^+\rangle_h/\norm{\bz^+}_2\ge\langle\bu,\bz^+\rangle_h$,
and \eqref{n:chord} implies
$\norm{\bdelta}_2^2=2\bigl(1-\langle\bu,\bz^+\rangle_h/\norm{\bz^+}_2\bigr)
\le2\bigl(1-\langle\bu,\bz^+\rangle_h\bigr)\le6\gamma D$. The bound
on $R$ follows from the definition \eqref{d2:orbit},
$R=-(1-\norm{\bz}_2)+\gamma\lambda(\bu)$, in which
$-(1-\norm{\bz}_2)\in[-3\gamma D,0]$ by \eqref{n:hyp} and
$\gamma\lambda(\bu)\in[0,2\gamma D]$ by Lemma~\ref{lem:ebd0}(iv), thus
$R\in[-3\gamma D,2\gamma D]$.
\end{proof}

\subsection{Conditional descent}\label{ssec:conddescent}

\begin{lemma}[Conditional descent]\label{lem:conddescent}
Let the hypotheses of Lemma~\ref{lem:posradius} hold together with
(C3), and assume in addition that $\bE_h(\bu^+)\le E_0$. Then
\begin{equation}
\gamma\bigl(\calL(\bz)-\calL(\bz^{+})\bigr)\ \ge\
\tfrac5{16}\,\norm{\bdelta}_{\calG}^2+\tfrac34\,R^2 ,
\label{n:conddrop}
\end{equation}
and the right side vanishes only if $R=0$ and $\bg(\bu)=0$, in
which case $\bz^+=\bz$.
\end{lemma}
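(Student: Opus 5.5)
The plan is to combine the energy identity \eqref{n:energyid} with the residual identity \eqref{n:residid} and use the bounds of Lemmas~\ref{lem:posradius} and \ref{lem:nonlinear}. Write $s:=\norm{\bz^+}_2$ and $t:=\norm{\bz}_2$, both in $[1-3\gamma D,1]$ by Lemma~\ref{lem:posradius}. Then
\[
\gamma(\calL-\calL^+)=\gamma(\bE_h(\bu)-\bE_h(\bu^+))+R^2-(R^+)^2 .
\]

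First I bound the energy drop from below using \eqref{n:energyid}.
\begin{itemize}
\item The first two terms give at least $\frac{s+1}{2}\norm{\bdelta}_2^2+\gamma(s-\frac12)\langle\bdelta,-\Delta_h\bdelta\rangle_h$. Since $s\ge1-3\gamma D\ge1-\frac3{128}$, this is at least roughly $\frac{2-3\gamma D}{2}\norm{\bdelta}_2^2+(\frac12-3\gamma D)\gamma\langle\bdelta,-\Delta_h\bdelta\rangle_h$. In particular it is at least $(\frac12-3\gamma D)\norm{\bdelta}_{\calG}^2$, plus an extra $\frac12\norm{\bdelta}_2^2$.
\item The Taylor term is handled by Lemma~\ref{lem:nonlinear}: $\gamma\calT\le\frac1{16}\norm{\bdelta}_{\calG}^2$.
\item The cross term $\gamma(s-t)\langle-\Delta_h\bu,\bdelta\rangle_h$ needs $s-t$. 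Use \eqref{n:radialid}, which gives $s-t=-\bigl(R+s(\gamma\langle-\Delta_h\bu,\bdelta\rangle_h-\frac12\norm{\bdelta}_2^2)\bigr)/(1+\gamma\langle\bu,-\Delta_h\bu\rangle_h)$. Bound $\gamma|\langle-\Delta_h\bu,\bdelta\rangle_h|\le\sqrt{\gamma\langle\bu,-\Delta_h\bu\rangle_h}\,\sqrt{\gamma\langle\bdelta,-\Delta_h\bdelta\rangle_h}\le\sqrt{2\gamma D}\,\norm{\bdelta}_{\calG}$ by Cauchy--Schwarz in the $-\Delta_h$ form and Lemma~\ref{lem:ebd0}(i). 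Hence $|s-t|\le|R|+\sqrt{2\gamma D}\norm{\bdelta}_{\calG}+3\gamma D\norm{\bdelta}_{\calG}$, and the cross term is at most $\sqrt{2\gamma D}\norm{\bdelta}_{\calG}\,|s-t|$.
\item Young's inequality then yields a cross term bounded by $\epsilon R^2+C\gamma D\norm{\bdelta}_{\calG}^2$, with small constants thanks to (C2).
\end{itemize}

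Next I bound $(R^+)^2$ from \eqref{n:residid}. The terms are:
\begin{itemize}
\item $\frac12\norm{\bdelta}_2^2\le\sqrt{3\gamma D/2}\,\norm{\bdelta}_2$, using $\norm{\bdelta}_2^2\le6\gamma D$;
\item $\gamma|s-t|\,|\langle-\Delta_h\bu,\bu^+\rangle_h|\le 2\gamma D\,|s-t|$, by Cauchy--Schwarz and Lemma~\ref{lem:ebd0}(i) at $\bu,\bu^+$;
\item $\gamma(1-s)|\langle-\Delta_h\bdelta,\bu^+\rangle_h|\le3\gamma D\sqrt{2\gamma D}\,\norm{\bdelta}_{\calG}$;
\item the $\bn$ term, bounded by $3\gamma D\norm{\bdelta}_{\calG}$ via Lemma~\ref{lem:nonlinear}.
\end{itemize}
Altogether $|R^+|\le c_1\sqrt{\gamma D}\norm{\bdelta}_{\calG}+4\gamma D|R|$ with $c_1$ a modest absolute constant. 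Squaring, $(R^+)^2\le 2c_1^2\gamma D\norm{\bdelta}_{\calG}^2+32\gamma^2D^2R^2$.

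Collecting the pieces, the coefficient of $\norm{\bdelta}_{\calG}^2$ is $\frac12-\frac1{16}-O(\gamma D)$ and the coefficient of $R^2$ is $1-O(\sqrt{\gamma D})$. With $\gamma D\le\frac1{128}$ these should be at least $\frac5{16}$ and $\frac34$. The main obstacle is the bookkeeping of constants. If the naive bound is too lossy, the first thing I would try is to use the extra $\frac12\norm{\bdelta}_2^2$ from the first term of \eqref{n:energyid} to absorb the $\norm{\bdelta}_2^2$ contribution in $(R^+)^2$ separately. I expect the $\sqrt{\gamma D}$-size cross terms to be the delicate ones, since (C2) gives only $\sqrt{\gamma D}\le 0.09$.

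For the equality case: if the right side vanishes, then $\bdelta=0$ and $R=0$. Then \eqref{n:stepdelta} with $\bdelta=0$ shows that $\gamma\nabla\bE_h(\bu)$ is a combination of $\bu$ and $-\Delta_h\bu$ with coefficient of the latter $-\gamma(s-t)$. Also \eqref{n:radialid} gives $s-t=-R/(1+\cdots)=0$. So $\gamma\nabla\bE_h(\bu)=(1-s)\bu$, hence $\bg(\bu)=0$. Then \eqref{d2:step} gives $\bbG_X(\bz^+-\bz)=0$, i.e.\ $\bz^+=\bz$. Conversely, $R=0$ and $\bg=0$ give $\bz^+=\bz$ by \eqref{d2:step}, so $\bdelta=0$.
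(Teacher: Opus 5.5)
Your plan is essentially the paper's proof. It uses the same split $\gamma(\calL(\bz)-\calL(\bz^+))=\gamma(\bE_h(\bu)-\bE_h(\bu^+))+R^2-(R^+)^2$, controls $\norm{\bz^+}_2-\norm{\bz}_2$ through \eqref{n:radialid}, applies the same Cauchy--Schwarz bounds in the $-\Delta_h$ form and the same four bounds on \eqref{n:residid}, handles the cross term of \eqref{n:energyid} by Young, and settles the equality case by an equivalent rearrangement of the paper's argument, which tests \eqref{d2:step} against $\bu$ directly. When you do the arithmetic, two points need fixing. First, the $\tfrac12\norm{\bdelta}_2^2$ term in \eqref{n:radialid} is only bounded by $\sqrt{3\gamma D/2}\,\norm{\bdelta}_{\calG}$, not by $3\gamma D\,\norm{\bdelta}_{\calG}$ as you wrote. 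Second, use a fixed Young weight on $R^2$, such as the paper's $xy\le\tfrac15x^2+\tfrac54y^2$, rather than a weight of order $\sqrt{\gamma D}$ as your summary $1-O(\sqrt{\gamma D})$ suggests, because in the paper's accounting the slack in the coefficient of $\norm{\bdelta}_{\calG}^2$ is only about $0.013$ at $\gamma D=\tfrac1{128}$. With these choices the totals $\tfrac7{16}-(10+\tfrac{128}{25})\gamma D\ge\tfrac5{16}$ and $\tfrac45-O(\gamma^2D^2)\ge\tfrac34$ close without your extra $\tfrac12\norm{\bdelta}_2^2$.
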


\begin{proof}
Since $\bE_h(\bu)\le E_0$ and $\bE_h(\bu^+)\le E_0$,
Lemma~\ref{lem:ebd0}(i) at $\bu$ and $\bu^+$ and $E_0\le D$ give
\begin{equation}
\gamma\langle\bu,-\Delta_h\bu\rangle_h\le2\gamma E_0\le2\gamma D,
\qquad
\gamma\langle\bu^+,-\Delta_h\bu^+\rangle_h\le2\gamma D,
\label{n:kinbounds}
\end{equation}
which are used throughout the proof. By \eqref{discreteL2norm} and
$\Delta_h=-\bbM^{-1}\bbS$, we have
$\langle\bv,-\Delta_h\bw\rangle_h=\bv^\top\bbM\bbM^{-1}\bbS\bw=\bv^\top\bbS\bw$
for all $\bv,\bw$, and $\bbS$ is symmetric positive semidefinite by
Lemma~\ref{lem:disc}(ii), thus
$(\bv,\bw)\mapsto\langle\bv,-\Delta_h\bw\rangle_h$ is a symmetric
positive semidefinite bilinear form, for which the Cauchy--Schwarz
inequality
\[
|\langle\bv,-\Delta_h\bw\rangle_h|
\le\sqrt{\langle\bv,-\Delta_h\bv\rangle_h}\sqrt{\langle\bw,-\Delta_h\bw\rangle_h}
\]
holds. With \eqref{n:kinbounds} and
$\gamma\langle\bdelta,-\Delta_h\bdelta\rangle_h\le\norm{\bdelta}_{\calG}^2$
from \eqref{n:Gnorm}, we have
\begin{equation}
\begin{gathered}
\gamma\bigl|\langle-\Delta_h\bu,\bdelta\rangle_h\bigr|
\le\sqrt{\gamma\langle\bu,-\Delta_h\bu\rangle_h}\,
\sqrt{\gamma\langle\bdelta,-\Delta_h\bdelta\rangle_h}
\le\sqrt{2\gamma D}\,\norm{\bdelta}_{\calG},
\\
\gamma\bigl|\langle-\Delta_h\bdelta,\bu^+\rangle_h\bigr|
\le\sqrt{\gamma\langle\bu^+,-\Delta_h\bu^+\rangle_h}\,
\sqrt{\gamma\langle\bdelta,-\Delta_h\bdelta\rangle_h}
\le\sqrt{2\gamma D}\,\norm{\bdelta}_{\calG},
\\
\gamma\bigl|\langle-\Delta_h\bu,\bu^+\rangle_h\bigr|
\le\sqrt{\gamma\langle\bu,-\Delta_h\bu\rangle_h}\,
\sqrt{\gamma\langle\bu^+,-\Delta_h\bu^+\rangle_h}
\le2\gamma D .
\end{gathered}
\label{n:csbounds}
\end{equation}
By \eqref{n:radius} and $\norm{\bdelta}_2\le\norm{\bdelta}_{\calG}$
from \eqref{n:Gnorm}, we have
\begin{equation}
\tfrac12\norm{\bdelta}_2^2\le\tfrac12\norm{\bdelta}_2\sqrt{6\gamma D}
\le\sqrt{3\gamma D/2}\,\norm{\bdelta}_{\calG},
\qquad
0\le1-\norm{\bz^+}_2\le3\gamma D,
\qquad
\norm{\bz^+}_2\le1 .
\label{n:smallfacts}
\end{equation}
Taking absolute values in \eqref{n:radialid}, with
$1+\gamma\langle\bu,-\Delta_h\bu\rangle_h\ge1$, and using
\eqref{n:csbounds} and \eqref{n:smallfacts}, we get
\begin{align}
\bigl|\norm{\bz^+}_2-\norm{\bz}_2\bigr|
&\le|R|+\norm{\bz^+}_2\Bigl(\gamma\bigl|\langle-\Delta_h\bu,\bdelta\rangle_h\bigr|
+\tfrac12\norm{\bdelta}_2^2\Bigr)
\notag\\
&\le|R|+\bigl(\sqrt2+\sqrt{3/2}\bigr)\sqrt{\gamma D}\,\norm{\bdelta}_{\calG}
\le|R|+\tfrac83\sqrt{\gamma D}\,\norm{\bdelta}_{\calG} ,
\label{n:ebound}
\end{align}
since $\sqrt2+\sqrt{3/2}<2.64<\frac83$.

We first estimate the energy by \eqref{n:energyid}. Since
$\norm{\bz^+}_2\ge1-3\gamma D$ by \eqref{n:radius} and
$\norm{\bz^+}_2\le1$, the coefficients of the first two terms on
the right side of \eqref{n:energyid} satisfy
$\frac{\norm{\bz^+}_2+1}{2}\ge\norm{\bz^+}_2-\frac12\ge\frac12-3\gamma D>0$,
and since $\norm{\bdelta}_2^2\ge0$ and
$\gamma\langle\bdelta,-\Delta_h\bdelta\rangle_h\ge0$ add up to
$\norm{\bdelta}_{\calG}^2$ by \eqref{n:Gnorm}, the sum of these two
terms is at least $(\frac12-3\gamma D)\norm{\bdelta}_{\calG}^2$. By
\eqref{n:csbounds} and \eqref{n:ebound}, the third term on the right
side of \eqref{n:energyid} is at least
\[
-\bigl|\norm{\bz^+}_2-\norm{\bz}_2\bigr|\,\gamma\bigl|\langle-\Delta_h\bu,\bdelta\rangle_h\bigr|
\ge-\bigl|\norm{\bz^+}_2-\norm{\bz}_2\bigr|\sqrt{2\gamma D}\,\norm{\bdelta}_{\calG}
\ge-\sqrt{2\gamma D}\,|R|\,\norm{\bdelta}_{\calG}-\tfrac{8\sqrt2}{3}\gamma D\,\norm{\bdelta}_{\calG}^2,
\]
and Young's inequality $xy\le\frac15x^2+\frac54y^2$ with $x=|R|$ and
$y=\sqrt{2\gamma D}\,\norm{\bdelta}_{\calG}$ gives
$\sqrt{2\gamma D}\,|R|\,\norm{\bdelta}_{\calG}\le\frac15R^2+\frac52\gamma D\,\norm{\bdelta}_{\calG}^2$.
With Lemma~\ref{lem:nonlinear} for the remainder term,
$-\gamma\calT(\bu,\bdelta)\ge-\frac1{16}\norm{\bdelta}_{\calG}^2$, we
get
\begin{equation}
\gamma\bigl(\bE_h(\bu)-\bE_h(\bu^+)\bigr)
\ge\Bigl(\tfrac12-\tfrac1{16}
-\bigl(3+\tfrac{8\sqrt2}3+\tfrac52\bigr)\gamma D\Bigr)\norm{\bdelta}_{\calG}^2
-\tfrac15R^2
\ge\Bigl(\tfrac7{16}-10\gamma D\Bigr)\norm{\bdelta}_{\calG}^2-\tfrac15R^2 ,
\label{n:energydrop}
\end{equation}
since $3+\frac{8\sqrt2}{3}+\frac52<9.28<10$.

Next we bound the four terms on the right side of \eqref{n:residid}
in absolute value. The first term is bounded by
$\sqrt{3\gamma D/2}\,\norm{\bdelta}_{\calG}$ via \eqref{n:smallfacts},
the second term by
$2\gamma D\bigl|\norm{\bz^+}_2-\norm{\bz}_2\bigr|
\le2\gamma D|R|+\frac{16}3(\gamma D)^{3/2}\norm{\bdelta}_{\calG}$
via \eqref{n:csbounds} and \eqref{n:ebound}, the third term by
$3\gamma D\cdot\sqrt{2\gamma D}\,\norm{\bdelta}_{\calG}$ via
\eqref{n:smallfacts} and \eqref{n:csbounds}, and the fourth term by
$3\gamma D\,\norm{\bdelta}_{\calG}$ via Lemma~\ref{lem:nonlinear}.
Adding these bounds, we have
\[
|R^+|\le2\gamma D|R|
+\Bigl(\sqrt{\tfrac32}+\bigl(\tfrac{16}3+3\sqrt2\bigr)\gamma D
+3\sqrt{\gamma D}\Bigr)\sqrt{\gamma D}\,\norm{\bdelta}_{\calG}
\le2\gamma D|R|+\tfrac85\sqrt{\gamma D}\,\norm{\bdelta}_{\calG},
\]
since the bracket is increasing in $\gamma D$ and bounded by
$1.2248+0.0749+0.2652<1.57<\frac85$ at $\gamma D=\frac1{128}$, the
largest value allowed by (C2). Squaring with
$(x+y)^2\le2x^2+2y^2$, we obtain
\begin{equation}
(R^+)^2\le2(2\gamma D)^2R^2+2\bigl(\tfrac85\bigr)^2\gamma D\,\norm{\bdelta}_{\calG}^2
=8\gamma^2D^2R^2+\tfrac{128}{25}\,\gamma D\,\norm{\bdelta}_{\calG}^2 .
\label{n:residdrop}
\end{equation}

By the definition \eqref{d2:orbit} of $\calL$, we have
$\gamma(\calL(\bz)-\calL(\bz^+))=\gamma(\bE_h(\bu)-\bE_h(\bu^+))
+R^2-(R^+)^2$, thus adding \eqref{n:energydrop} and
\eqref{n:residdrop} we get
\[
\gamma\bigl(\calL(\bz)-\calL(\bz^+)\bigr)
\ge\Bigl(\tfrac7{16}-\bigl(10+\tfrac{128}{25}\bigr)\gamma D\Bigr)\norm{\bdelta}_{\calG}^2
+\bigl(\tfrac45-8\gamma^2D^2\bigr)R^2
\ge\tfrac5{16}\norm{\bdelta}_{\calG}^2+\tfrac34R^2,
\]
since $(10+\frac{128}{25})\gamma D\le16\gamma D\le\frac18$ and
$8\gamma^2D^2\le\frac{8}{128^2}<\frac1{20}$ by (C2), and
$\frac7{16}-\frac18=\frac5{16}$, $\frac45-\frac1{20}=\frac34$.

It remains to prove the equality case. If the right side of
\eqref{n:conddrop} vanishes, then $\norm{\bdelta}_{\calG}=0$ and
$R=0$. Since $\norm{\cdot}_{\calG}$ is a norm by \eqref{n:Gnorm},
$\bdelta=0$, i.e., $\bu^+=\bu$, and \eqref{n:zdecomp} becomes
$\bz^+-\bz=(\norm{\bz^+}_2-\norm{\bz}_2)\bu$. With $R=0$, the second
identity of \eqref{d2:step} then becomes
$(\norm{\bz^+}_2-\norm{\bz}_2)\,\bbG_X\bu=-\gamma\bg(\bu)$. Taking its
$\langle\cdot,\cdot\rangle_h$-product with $\bu$ and using
$\langle\bg(\bu),\bu\rangle_h=0$, we get
$(\norm{\bz^+}_2-\norm{\bz}_2)\langle\bu,\bbG_X\bu\rangle_h=0$, and
since
$\langle\bu,\bbG_X\bu\rangle_h=\norm{\bu}_{\calG}^2\ge\norm{\bu}_2^2=1$
by \eqref{n:Gnorm}, we have $\norm{\bz^+}_2=\norm{\bz}_2$, thus
$\bz^+=\bz$ and $\bg(\bu)=0$.
\end{proof}

\begin{lemma}[The projected gradient and the increment]\label{lem:gradform}
Under the hypotheses of Lemma~\ref{lem:posradius},
\begin{equation}
\gamma^2\norm{\bg(\bu)}_X^2\le\tfrac54\norm{\bdelta}_{\calG}^2+10\gamma D\,R^2,
\qquad
\norm{\bz^+-\bz}_{\calG}^2\le \norm{\bdelta}_{\calG}^2+R^2 .
\label{n:gradform}
\end{equation}
\end{lemma}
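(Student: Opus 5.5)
The plan is to rewrite the projected gradient exactly in terms of the increment $\bdelta$, the residual $R$ and the length change $\varepsilon:=\norm{\bz^+}_2-\norm{\bz}_2$. I use only \eqref{n:stepdelta}, \eqref{n:zdecomp} and the bounds of Lemma~\ref{lem:posradius}, and never an energy bound at $\bu^+$, which is not among the hypotheses.

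\emph{An exact formula for $\gamma\bg(\bu)$.} Insert $\nabla\bE_h(\bu)=\lambda(\bu)\bu+\bg(\bu)$ into \eqref{n:stepdelta}. With $1-\norm{\bz^+}_2-\gamma\lambda(\bu)=-R-\varepsilon$ and $\bbG_X\bu=\bu+\gamma(-\Delta_h\bu)$, this gives
\[
\gamma\bg(\bu)=-\norm{\bz^+}_2\,\bbG_X\bdelta-R\,\bu-\varepsilon\,\bbG_X\bu .
\]
Taking the $\langle\cdot,\cdot\rangle_h$-product with $\bu$ and using $\langle\bg(\bu),\bu\rangle_h=0$ gives
\[
R=-\norm{\bz^+}_2\langle\bbG_X\bdelta,\bu\rangle_h-\varepsilon\norm{\bu}_{\calG}^2 .
\]
This is a rewriting of \eqref{n:radialid}. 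Substituting it back eliminates $R$:
\[
\gamma\bg(\bu)=-\norm{\bz^+}_2\bigl(\bbG_X\bdelta-\langle\bbG_X\bdelta,\bu\rangle_h\bu\bigr)
-\varepsilon\,\gamma\bigl(-\Delta_h\bu-\langle\bu,-\Delta_h\bu\rangle_h\bu\bigr).
\]
This cancellation is essential. The naive triangle bound $\gamma\norm{\bg}_X\le\norm{\bdelta}_{\calG}+|R|+|\varepsilon|\norm{\bu}_{\calG}$ would give a coefficient of order $1$ in front of $R^2$, whereas the claim requires $10\gamma D$.

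\emph{Bounding the two pieces in the $X$-norm.} For the first piece I use $\norm{\bbG_X\bdelta}_X=\norm{\bdelta}_{\calG}$ from \eqref{n:Gnorm} and $\norm{\bu}_X\le1$. The scalar $\langle\bbG_X\bdelta,\bu\rangle_h=\langle\bu,\bdelta\rangle_h+\gamma\langle-\Delta_h\bu,\bdelta\rangle_h$ is bounded, via \eqref{n:chord}, the first line of \eqref{n:csbounds} and \eqref{n:smallfacts}, by $(\sqrt{3/2}+\sqrt2)\sqrt{\gamma D}\,\norm{\bdelta}_{\calG}$. Both estimates use only the energy bound at $\bu$ and the radial interval. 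For the second piece, the spectral representation gives
\[
\norm{\gamma(-\Delta_h\bu)}_X^2=\sum_j\frac{\gamma^2\mu_j^2}{1+\gamma\mu_j}\hat u_j^2\le\gamma\cdot\gamma\langle\bu,-\Delta_h\bu\rangle_h\le2\gamma^2D,
\]
and the same bound holds for the scalar multiple of $\bu$. So the second piece is at most $2\sqrt2\,\gamma\sqrt{D}\,|\varepsilon|$. For $|\varepsilon|$ I use \eqref{n:ebound}, whose derivation needs only the first line of \eqref{n:csbounds}, hence only the energy bound at $\bu$. Altogether,
\[
\gamma\norm{\bg}_X\le\bigl(1+c_1\sqrt{\gamma D}\bigr)\norm{\bdelta}_{\calG}+c_2\sqrt{\gamma D}\,|R|,
\]
with $c_2\approx2\sqrt2$ (using $\gamma\le\sqrt\gamma$ by (C1), or the analogous bound through (C2)). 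Squaring with the weighted Young inequality $(x+y)^2\le(1+\eta)x^2+(1+\eta^{-1})y^2$, and fixing $\eta$ once (C2), $\gamma D\le\tfrac1{128}$, has been inserted, gives $\tfrac54\norm{\bdelta}_{\calG}^2+10\gamma D\,R^2$. The constant bookkeeping here is routine but must be checked carefully.

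\emph{The second inequality.} By \eqref{n:Gnorm} and the second identity of \eqref{d2:step}, $\norm{\bz^+-\bz}_{\calG}=\norm{\bbG_X(\bz^+-\bz)}_X$. Using the exact formula above,
\[
\bbG_X(\bz^+-\bz)=\norm{\bz^+}_2\bbG_X\bdelta+\varepsilon\,\bbG_X\bu,
\]
which is just \eqref{n:zdecomp}. Then
\[
\norm{\bz^+-\bz}_{\calG}^2=\norm{\bz^+}_2^2\norm{\bdelta}_{\calG}^2+2\norm{\bz^+}_2\varepsilon\langle\bbG_X\bdelta,\bu\rangle_h+\varepsilon^2\norm{\bu}_{\calG}^2 .
\]
Solving the projection identity for $\varepsilon\norm{\bu}_{\calG}^2$, which equals $-R-\norm{\bz^+}_2\langle\bbG_X\bdelta,\bu\rangle_h$, turns the last two terms into
\[
\frac{R^2-\norm{\bz^+}_2^2\langle\bbG_X\bdelta,\bu\rangle_h^2}{\norm{\bu}_{\calG}^2}.
\]
I expect this completing-the-square step to be the main obstacle, because the constants $1$ in front of both $\norm{\bdelta}_{\calG}^2$ and $R^2$ leave no room for crude estimates. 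With the formula in hand, however, the conclusion follows from $\norm{\bz^+}_2\le1$ (Lemma~\ref{lem:posradius}) and $\norm{\bu}_{\calG}\ge1$, since the subtracted term is nonnegative. This yields $\norm{\bz^+-\bz}_{\calG}^2\le\norm{\bdelta}_{\calG}^2+R^2$.
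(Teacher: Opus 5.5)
Your second inequality is correct, and it is the paper's argument in completed-square form. With $s:=\langle\bbG_X\bdelta,\bu\rangle_h$, the bracket $\norm{\bdelta}_{\calG}^2-s^2/\norm{\bu}_{\calG}^2$ in your identity equals $\norm{\bdelta_\perp}_{\calG}^2$. Here $\bdelta_\perp:=\bdelta-(s/\norm{\bu}_{\calG}^2)\,\bu$ is the $\calG$-orthogonal projection of $\bdelta$ away from $\bu$.

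The first inequality, however, does not follow from your estimates. The bookkeeping you call routine is exactly where the argument fails, because the constants $\tfrac54$ and $10$ leave no slack. A bound $\gamma\norm{\bg(\bu)}_X\le a\norm{\bdelta}_{\calG}+c\sqrt{\gamma D}\,|R|$ implies $\gamma^2\norm{\bg(\bu)}_X^2\le\tfrac54\norm{\bdelta}_{\calG}^2+10\gamma D\,R^2$ for all values of $\norm{\bdelta}_{\calG}$ and $|R|$ if and only if $\tfrac45a^2+\tfrac{c^2}{10}\le1$. The pair $(a,c)=(1,\sqrt2)$ lies exactly on this boundary. Your triangle inequality on the first piece already gives $a\ge1+(\sqrt{3/2}+\sqrt2)\sqrt{\gamma D}\approx1.23$ at $\gamma D=\tfrac1{128}$. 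Then $\tfrac45a^2>1$, so the claim fails even for $R=0$. Your $c_2\approx2\sqrt2$ would by itself force $a\le\tfrac12$.

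There is also a smaller error. The correct bound is $\norm{\gamma(-\Delta_h\bu)}_X^2\le\gamma\langle\bu,-\Delta_h\bu\rangle_h\le2\gamma D$. Your $2\gamma^2D$ needs $\mu_j\le1+\gamma\mu_j$, which fails for large $\mu_j$, and (C1) is not among the hypotheses of Lemma~\ref{lem:posradius}.

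The missing idea is to eliminate $\varepsilon$ rather than $R$, using the same $\bdelta_\perp$ as in your second part. Substitute $\norm{\bu}_{\calG}^2\varepsilon=-R-\norm{\bz^+}_2s$ into $\bz^+-\bz=\norm{\bz^+}_2\bdelta+\varepsilon\bu$. This gives $\bz^+-\bz=\norm{\bz^+}_2\bdelta_\perp-(R/\norm{\bu}_{\calG}^2)\,\bu$, and the second identity of \eqref{d2:step} then yields
\[
\gamma\bg(\bu)=-\norm{\bz^+}_2\,\bbG_X\bdelta_\perp-R\Bigl(\bu-\frac{\bbG_X\bu}{\norm{\bu}_{\calG}^2}\Bigr).
\]
Contrary to your remark, keeping $R$ costs nothing once $R\bu$ is paired with $\bbG_X\bu/\norm{\bu}_{\calG}^2$. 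That combination satisfies
\[
\norm{\bu-\bbG_X\bu/\norm{\bu}_{\calG}^2}_X^2=\norm{\bu}_X^2-1/\norm{\bu}_{\calG}^2\le2\gamma D,
\]
while $\norm{\bbG_X\bdelta_\perp}_X=\norm{\bdelta_\perp}_{\calG}\le\norm{\bdelta}_{\calG}$. This gives exactly $a=1$ and $c=\sqrt2$, and $(x+y)^2\le\tfrac54x^2+5y^2$ finishes the proof.
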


\begin{proof}
Let
\[
\bdelta_\perp:=\bdelta
-\frac{\langle\bu,\bbG_X\bdelta\rangle_h}{\langle\bu,\bbG_X\bu\rangle_h}\,\bu ,
\qquad\text{so that}\qquad
\langle\bu,\bbG_X\bdelta_\perp\rangle_h=0,
\qquad
\norm{\bdelta}_{\calG}^2=\norm{\bdelta_\perp}_{\calG}^2
+\frac{\langle\bu,\bbG_X\bdelta\rangle_h^2}{\langle\bu,\bbG_X\bu\rangle_h}
\ge\norm{\bdelta_\perp}_{\calG}^2 ,
\]
i.e., $\bdelta_\perp$ is the projection of $\bdelta$ onto the
orthogonal complement of $\bu$ with respect to the inner product
$\langle\cdot,\bbG_X\cdot\rangle_h$ of the norm $\norm{\cdot}_{\calG}$.
By \eqref{n:chord} and the $h$-self-adjointness of $-\Delta_h$, we
have $\langle\bu,\bbG_X\bdelta\rangle_h=\langle\bu,\bdelta\rangle_h
+\gamma\langle\bu,-\Delta_h\bdelta\rangle_h
=-\tfrac12\norm{\bdelta}_2^2+\gamma\langle-\Delta_h\bu,\bdelta\rangle_h$
and $\langle\bu,\bbG_X\bu\rangle_h=1+\gamma\langle\bu,-\Delta_h\bu\rangle_h$,
thus \eqref{n:radialid} reads
$\langle\bu,\bbG_X\bu\rangle_h\bigl(\norm{\bz^+}_2-\norm{\bz}_2\bigr)
=-R-\norm{\bz^+}_2\langle\bu,\bbG_X\bdelta\rangle_h$. Substituting
this and
$\bdelta=\bdelta_\perp+\frac{\langle\bu,\bbG_X\bdelta\rangle_h}{\langle\bu,\bbG_X\bu\rangle_h}\bu$
into \eqref{n:zdecomp}, the terms with
$\langle\bu,\bbG_X\bdelta\rangle_h$ cancel and we have
\begin{equation}
\bz^+-\bz=\norm{\bz^+}_2\bdelta+(\norm{\bz^+}_2-\norm{\bz}_2)\bu
=\norm{\bz^+}_2\,\bdelta_\perp
-\frac{R}{\langle\bu,\bbG_X\bu\rangle_h}\,\bu ,
\label{n:zdecompperp}
\end{equation}
and substituting \eqref{n:zdecompperp} into the second identity of
\eqref{d2:step}, we have
\begin{equation}
\gamma\bg(\bu)=-\bbG_X(\bz^+-\bz)-R\bu
=-\norm{\bz^+}_2\,\bbG_X\bdelta_\perp
-R\Bigl(\bu-\frac{\bbG_X\bu}{\langle\bu,\bbG_X\bu\rangle_h}\Bigr).
\label{n:gdecomp}
\end{equation}
The two terms on the right side of \eqref{n:zdecompperp} are
orthogonal with respect to $\langle\cdot,\bbG_X\cdot\rangle_h$, since
$\langle\bu,\bbG_X\bdelta_\perp\rangle_h=0$, and
$\norm{\bu}_{\calG}^2=\langle\bu,\bbG_X\bu\rangle_h$ by \eqref{n:Gnorm},
thus
\[
\norm{\bz^+-\bz}_{\calG}^2
=\norm{\bz^+}_2^2\norm{\bdelta_\perp}_{\calG}^2
+\frac{R^2}{\langle\bu,\bbG_X\bu\rangle_h^2}\norm{\bu}_{\calG}^2
=\norm{\bz^+}_2^2\norm{\bdelta_\perp}_{\calG}^2
+\frac{R^2}{\langle\bu,\bbG_X\bu\rangle_h}
\le \norm{\bdelta}_{\calG}^2+R^2 ,
\]
by $\norm{\bz^+}_2\le1$ from \eqref{n:radius},
$\norm{\bdelta_\perp}_{\calG}\le\norm{\bdelta}_{\calG}$ and
$\langle\bu,\bbG_X\bu\rangle_h=1+\gamma\langle\bu,-\Delta_h\bu\rangle_h\ge1$,
which is the second bound in \eqref{n:gradform}. For the first
bound, we take the $\norm{\cdot}_X$-norm of \eqref{n:gdecomp}. We
have $\norm{\bbG_X\bdelta_\perp}_X=\norm{\bdelta_\perp}_{\calG}\le \norm{\bdelta}_{\calG}$ by
\eqref{n:Gnorm}. By \eqref{d2:Xnorm} and \eqref{n:Gnorm},
$\langle\bu,\bbG_X\bu\rangle_X=\langle\bu,\bbG_X^{-1}\bbG_X\bu\rangle_h=\norm{\bu}_2^2=1$
and $\norm{\bbG_X\bu}_X^2=\norm{\bu}_{\calG}^2=\langle\bu,\bbG_X\bu\rangle_h$,
thus, expanding the square and using $\norm{\bu}_X\le\norm{\bu}_2=1$
from \eqref{d2:Gbounds}, Lemma~\ref{lem:ebd0}(i) and $E_0\le D$,
\begin{align*}
\norm{\bu-\frac{\bbG_X\bu}{\langle\bu,\bbG_X\bu\rangle_h}}_X^2
&=\norm{\bu}_X^2-\frac{2\langle\bu,\bbG_X\bu\rangle_X}{\langle\bu,\bbG_X\bu\rangle_h}
+\frac{\norm{\bbG_X\bu}_X^2}{\langle\bu,\bbG_X\bu\rangle_h^2}
=\norm{\bu}_X^2-\frac{1}{\langle\bu,\bbG_X\bu\rangle_h}
\\
&\le1-\frac{1}{1+\gamma\langle\bu,-\Delta_h\bu\rangle_h}
=\frac{\gamma\langle\bu,-\Delta_h\bu\rangle_h}{1+\gamma\langle\bu,-\Delta_h\bu\rangle_h}
\le\gamma\langle\bu,-\Delta_h\bu\rangle_h\le2\gamma D .
\end{align*}
By the triangle inequality in \eqref{n:gdecomp}, we get
$\gamma\norm{\bg(\bu)}_X\le\norm{\bz^+}_2\norm{\bdelta}_{\calG}+\sqrt{2\gamma D}\,|R|
\le \norm{\bdelta}_{\calG}+\sqrt{2\gamma D}\,|R|$, and
$(x+y)^2\le\frac54x^2+5y^2$ implies the first bound.
\end{proof}

\subsection{The energy barrier and the one-step estimates}\label{sec:positivity}

Lemma~\ref{lem:conddescent} assumes that the new direction has energy
at most $E_0$. In this subsection we remove this assumption by a
continuation argument, in which the step size and the radial
deviation of the input are scaled by the same factor.

\begin{lemma}[Energy barrier]\label{lem:barrier}
Let Assumption~\ref{asp:struct} and (C2)--(C4) in \eqref{n:cond}
hold, and assume $\bz\neq0$, $\bu>0$,
$1-3\gamma D\le\norm{\bz}_2\le1$ and $\calL(\bz)\le E_0$. Then
$\bE_h(\bu^+)\le E_0$.
\end{lemma}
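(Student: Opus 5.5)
The plan is a continuation in the step size, scaling the step size and the radial deviation of the input by a common factor $t\in(0,1]$. For $t\in(0,1]$ set $\gamma_t:=t\gamma$ and $\bz_t:=\bigl(1-t(1-\norm{\bz}_2)\bigr)\bu$. Then $\bz_t$ has the same direction $\bu$, and $\bz_1=\bz$. Let $\bz_t^+$ be one step of \eqref{dys} with step size $\gamma_t$ from $\bz_t$, and $\bu_t^+$ its normalization.

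First I check that the hypotheses of the earlier lemmas survive the scaling. Conditions (C2)--(C4) are monotone in $\gamma$, so they hold for $\gamma_t$. We have $1-3\gamma_tD\le\norm{\bz_t}_2\le1$ and $\bu>0$. Since $\calL(\bz)\le E_0$, also $\bE_h(\bu)\le E_0$. The residual for step size $\gamma_t$ scales exactly:
\[
R_t=\norm{\bz_t}_2-1+\gamma_t\lambda(\bu)=tR .
\]
Hence the Lyapunov value with step size $\gamma_t$ is
\[
\calL_t=\bE_h(\bu)+\frac{t^2R^2}{t\gamma}=\bE_h(\bu)+\frac{tR^2}{\gamma}\le\calL(\bz)\le E_0 .
\]
By Lemma~\ref{lem:posradius}, $\bz_t^+\neq0$ for every $t\in(0,1]$. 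Because the resolvent $(\bbM+t\gamma\bbS)^{-1}$ depends continuously on $t$, the map $t\mapsto f(t):=\bE_h(\bu_t^+)$ is continuous on $(0,1]$. Moreover $\bz_t^+\to\bu$ as $t\to0$, so $f(t)\to\bE_h(\bu)\le E_0$.

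The core is a first-contact argument. Suppose $f(1)>E_0$, and let $t^*:=\sup\{s\in(0,1]: f(t)\le E_0\ \text{for all}\ t\in(0,s]\}$, once we know this set is nonempty. By continuity $f(t^*)\le E_0$, and $f$ exceeds $E_0$ at points arbitrarily close to $t^*$ from the right. At $t^*$ the hypotheses of Lemma~\ref{lem:conddescent} hold with step size $\gamma_{t^*}$, which gives
\[
f(t^*)\le\calL(\bz_{t^*}^+)\le\calL_{t^*}-\frac{1}{\gamma_{t^*}}\Bigl(\tfrac5{16}\norm{\bdelta}_{\calG}^2+\tfrac34t^{*2}R^2\Bigr)\le E_0 .
\]
\begin{itemize}
\item[(a)] If the right side is strictly below $E_0$, then $f(t^*)<E_0$, and continuity contradicts the choice of $t^*$.
\item[(b)] Otherwise the equality case of Lemma~\ref{lem:conddescent} applies: $R=0$, $\bg(\bu)=0$ and $\bz_{t^*}^+=\bz_{t^*}$. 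Since $R=0$ and $\bg(\bu)=0$ do not depend on $t$, the second identity of \eqref{d2:step} with $\bbG_X$ invertible gives $\bz_t^+=\bz_t$ for every $t$. Then $f\equiv\bE_h(\bu)\le E_0$, contradicting $f(1)>E_0$.
\end{itemize}

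The step I expect to be the main obstacle is showing the set defining $t^*$ is nonempty, i.e.\ $f(t)\le E_0$ for all small $t$; only the limiting behaviour at $t=0$ is known. If $\bE_h(\bu)<E_0$, this follows from $f(t)\to\bE_h(\bu)$. In the borderline case $\bE_h(\bu)=E_0$, the bound $\calL(\bz)\le E_0$ forces $R=0$, so $R_t=0$ for all $t$. By the second identity of \eqref{d2:step}, $\bz_t^+-\bz_t=-t\gamma\,\bbG_{X,t}^{-1}\bg(\bu)$.
\begin{itemize}
\item[(a)] If $\bg(\bu)=0$, then $\bz_t^+=\bz_t$ and $f\equiv E_0$, so there is nothing to prove.
\item[(b)] If $\bg(\bu)\neq0$, I will expand $f$ to first order in $t$ via \eqref{n:taylorE}, using that $\bg(\bu)$ is $\langle\cdot,\cdot\rangle_h$-orthogonal to $\bu$. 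This should give $f(t)=E_0-t\gamma\norm{\bg(\bu)}_2^2+O(t^2)$, hence $f(t)<E_0$ for small $t>0$.
\end{itemize}
With this in place, the first-contact argument above shows $f(1)\le E_0$, that is, $\bE_h(\bu^+)\le E_0$.
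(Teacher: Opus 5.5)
Your proposal is correct and is essentially the paper's proof. It uses the same continuation $\bz_\theta=(1-\theta(1-\norm{\bz}_2))\bu$ with step size $\theta\gamma$, the same scaling $R_\theta=\theta R$ with $\calL_\theta(\bz_\theta)\le\calL(\bz)\le E_0$, the same first-order slope $-\gamma\norm{\bg(\bu)}_2^2$ in the borderline case $\bE_h(\bu)=E_0$, and a first-contact argument closed by the equality case of Lemma~\ref{lem:conddescent}; the only differences are cosmetic (you take the supremum of the good set and split on strict inequality, where the paper takes the infimum of the bad set and shows the energy equals $E_0$ there).
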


\begin{proof}
For $0\le\theta\le1$, set
\begin{equation}
\bz_\theta:=\bigl(1-\theta(1-\norm{\bz}_2)\bigr)\bu .
\label{n:continuation}
\end{equation}
Since $0\le1-\norm{\bz}_2\le3\gamma D<1$ by the hypothesis
$1-3\gamma D\le\norm{\bz}_2\le1$ and (C2), the factor
$1-\theta(1-\norm{\bz}_2)$ in \eqref{n:continuation} lies in
$[1-3\theta\gamma D,1]\subset(0,1]$, and $\norm{\bu}_2=1$ gives
\begin{equation}
\norm{\bz_\theta}_2=1-\theta(1-\norm{\bz}_2)\in[1-3\theta\gamma D,1],
\qquad
1-\norm{\bz_\theta}_2=\theta(1-\norm{\bz}_2),
\label{n:thetalength}
\end{equation}
thus the normalized vector of $\bz_\theta$ is $\bu$. Let
$\bz_\theta^+$ denote the result of one step \eqref{dys} of size
$\theta\gamma$ from $\bz_\theta$, i.e., by the first identity of
\eqref{d2:step} with $\theta\gamma$ in place of $\gamma$ and
\eqref{n:thetalength},
\begin{equation}
\bz_\theta^+=(\bbI-\theta\gamma\Delta_h)^{-1}
\bigl(\bu-\theta\gamma\bn(\bu)
-\theta^2\gamma(1-\norm{\bz}_2)(-\Delta_h\bu)\bigr),
\qquad \bz_0^+=\bu,\qquad \bz_1^+=\bz^+ .
\label{n:zalpha}
\end{equation}
Quantities belonging to the step size $\theta\gamma$ have the
subscript $\theta$: $R_\theta(\cdot)$ and $\calL_\theta(\cdot)$ are
the residual and the Lyapunov function of \eqref{d2:orbit} with
$\gamma$ replaced by $\theta\gamma$, and
$\bu_\theta^+:=\bz_\theta^+/\norm{\bz_\theta^+}_2$ if
$\bz_\theta^+\neq0$. Since $\bz_1^+=\bz^+$, it suffices to prove
that $\bE_h(\bu_\theta^+)\le E_0$ for every $0\le\theta\le1$.

We first show that the continuation is admissible. For $0<\theta\le1$
we have $\theta\gamma\le\gamma$, and the left sides of
(C2)--(C4) in \eqref{n:cond} are increasing functions of $\gamma$,
since the constants $D$, $L_0$, $B_1$ and $\Lambda_{\mathrm d}$ of
\S\ref{ssec:data} do not depend on $\gamma$, thus (C2)--(C4) hold
with $\theta\gamma$ in place of $\gamma$. With
\eqref{n:thetalength}, $\bu>0$ and
$\bE_h(\bu)\le\calL(\bz)\le E_0$ by \eqref{d2:orbit}, $\bz_\theta$
satisfies \eqref{n:hyp} with $\theta\gamma$ in place of $\gamma$. Then Lemma~\ref{lem:posradius}, applied to $\bz_\theta$ with the
step size $\theta\gamma$, implies $\bz_\theta^+>0$ and
$\norm{\bz_\theta^+}_2\ge1-3\theta\gamma D>0$, and $\bu_\theta^+$
is defined for every $\theta\in[0,1]$, with $\bu_0^+=\bu$ by
\eqref{n:zalpha}. Since
$(\bbI-\theta\gamma\Delta_h)^{-1}=(\bbM+\theta\gamma\bbS)^{-1}\bbM$ and
$\bbM+\theta\gamma\bbS$ is positive definite for $\theta\ge0$, the
entries of $\bz_\theta^+$ in \eqref{n:zalpha} are rational functions
of $\theta$ whose denominators do not vanish on $[0,1]$, thus
$\theta\mapsto\bE_h(\bu_\theta^+)$ is continuous on $[0,1]$ and
differentiable at $\theta=0$. By \eqref{d2:orbit} with $\theta\gamma$
in place of $\gamma$ and \eqref{n:thetalength}, for $0<\theta\le1$
the residual and the Lyapunov function of $\bz_\theta$ are
\begin{equation}
\begin{gathered}
R_\theta(\bz_\theta)=\norm{\bz_\theta}_2-1+\theta\gamma\lambda(\bu)
=-\theta(1-\norm{\bz}_2)+\theta\gamma\lambda(\bu)=\theta R,
\\
\calL_\theta(\bz_\theta)=\bE_h(\bu)+\frac{(\theta R)^2}{\theta\gamma}
=\bE_h(\bu)+\frac{\theta R^2}{\gamma}
\le\bE_h(\bu)+\frac{R^2}{\gamma}=\calL(\bz)\le E_0 ,
\end{gathered}
\label{n:scaling}
\end{equation}
where we used $\bu=\bz_\theta/\norm{\bz_\theta}_2$ in the
computation of the Lyapunov function.

Next we consider the case $R=0$ and $\bg(\bu)=0$. The
second identity of \eqref{d2:step}, applied to the step of size
$\theta\gamma$ from $\bz_\theta$, whose normalized vector is $\bu$ and
whose residual is $\theta R$ by \eqref{n:scaling}, implies
$(\bbI-\theta\gamma\Delta_h)(\bz_\theta^+-\bz_\theta)
=-\bigl(\theta R\bu+\theta\gamma\bg(\bu)\bigr)
=-\theta\bigl(R\bu+\gamma\bg(\bu)\bigr)=0$, thus
$\bz_\theta^+=\bz_\theta=\norm{\bz_\theta}_2\bu$, which implies
$\bu_\theta^+=\bu$ and $\bE_h(\bu_\theta^+)=\bE_h(\bu)\le E_0$ for
every $\theta$. In the rest of the proof we assume that $R\neq0$ or
$\bg(\bu)\neq0$.

We claim that there is $\theta_1>0$ such that
$\bE_h(\bu_\theta^+)<E_0$ for $0<\theta<\theta_1$. If
$\bE_h(\bu)<E_0$, this follows from the continuity of
$\theta\mapsto\bE_h(\bu_\theta^+)$ at $\theta=0$, where its value is
$\bE_h(\bu_0^+)=\bE_h(\bu)$. If $\bE_h(\bu)=E_0$, then $R=0$ since
$\calL(\bz)=\bE_h(\bu)+R^2/\gamma\le E_0$ by \eqref{d2:orbit}, and
therefore $\bg(\bu)\neq0$. Multiplying \eqref{n:zalpha} by $\bbI-\theta\gamma\Delta_h$ gives
$(\bbI-\theta\gamma\Delta_h)\bz_\theta^+
=\bu-\theta\gamma\bn(\bu)-\theta^2\gamma(1-\norm{\bz}_2)(-\Delta_h\bu)$,
and differentiating both sides with respect to $\theta$ by the
product rule gives
\[
-\gamma\Delta_h\bz_\theta^+
+(\bbI-\theta\gamma\Delta_h)\frac{d}{d\theta}\bz_\theta^+
=-\gamma\bn(\bu)-2\theta\gamma(1-\norm{\bz}_2)(-\Delta_h\bu).
\]
At $\theta=0$, where $\bz_0^+=\bu$, this reads
$-\gamma\Delta_h\bu+\frac{d}{d\theta}\bz_\theta^+\big|_{\theta=0}
=-\gamma\bn(\bu)$, thus, by \eqref{n:ndef},
\[
\frac{d}{d\theta}\bz_\theta^+\Big|_{\theta=0}
=\gamma\Delta_h\bu-\gamma\bn(\bu)
=-\gamma\bigl(-\Delta_h\bu+\bn(\bu)\bigr)=-\gamma\nabla\bE_h(\bu).
\]
Differentiating $\norm{\bz_\theta^+}_2^2=\langle\bz_\theta^+,\bz_\theta^+\rangle_h$
gives
$\frac{d}{d\theta}\norm{\bz_\theta^+}_2
=\langle\bz_\theta^+,\frac{d}{d\theta}\bz_\theta^+\rangle_h/\norm{\bz_\theta^+}_2$,
thus the quotient rule applied to
$\bu_\theta^+=\bz_\theta^+/\norm{\bz_\theta^+}_2$ gives
\[
\frac{d}{d\theta}\bu_\theta^+
=\frac{1}{\norm{\bz_\theta^+}_2}\frac{d}{d\theta}\bz_\theta^+
-\frac{\langle\bz_\theta^+,\frac{d}{d\theta}\bz_\theta^+\rangle_h}
{\norm{\bz_\theta^+}_2^{3}}\,\bz_\theta^+ .
\]
At $\theta=0$, where $\bz_0^+=\bu$, $\norm{\bu}_2=1$ and
$\frac{d}{d\theta}\bz_\theta^+\big|_{\theta=0}=-\gamma\nabla\bE_h(\bu)$,
this gives, by the definitions \eqref{d2:grad} of $\lambda(\bu)$ and
$\bg(\bu)$,
\[
\frac{d}{d\theta}\bu_\theta^+\Big|_{\theta=0}
=-\gamma\nabla\bE_h(\bu)+\gamma\langle\bu,\nabla\bE_h(\bu)\rangle_h\,\bu
=-\gamma\bigl(\nabla\bE_h(\bu)-\lambda(\bu)\bu\bigr)=-\gamma\bg(\bu),
\]
and then, by the chain rule,
\[
\frac{d}{d\theta}\bE_h(\bu_\theta^+)\Big|_{\theta=0}
=\bigl\langle\nabla\bE_h(\bu),-\gamma\bg(\bu)\bigr\rangle_h
=-\gamma\bigl\langle\lambda(\bu)\bu+\bg(\bu),\bg(\bu)\bigr\rangle_h
=-\gamma\norm{\bg(\bu)}_2^2<0,
\]
where $\nabla\bE_h(\bu)=\lambda(\bu)\bu+\bg(\bu)$ by \eqref{d2:grad},
$\langle\bg(\bu),\bu\rangle_h=0$ and $\bg(\bu)\neq0$ are used in
the last two steps. A function with value $E_0$ and negative
derivative at $\theta=0$ is below $E_0$ on some interval
$(0,\theta_1)$, which proves the claim.

Suppose that $\bE_h(\bu_\theta^+)>E_0$ for some $\theta\in(0,1]$, and
let $\theta_*$ be the infimum of all such $\theta$. By the claim,
every such $\theta$ is at least $\theta_1$, thus
$\theta_*\ge\theta_1>0$. By the definition of the infimum,
$\bE_h(\bu_\theta^+)\le E_0$ for $0<\theta<\theta_*$, and by
continuity $\bE_h(\bu_{\theta_*}^+)\le E_0$ as a limit from the left
and $\bE_h(\bu_{\theta_*}^+)\ge E_0$ as a limit of values above $E_0$,
thus $\bE_h(\bu_{\theta_*}^+)=E_0$. Then Lemma~\ref{lem:conddescent}
applies to the step of size $\theta_*\gamma$ from $\bz_{\theta_*}$,
whose new normalized vector is $\bu_{\theta_*}^+$, since
(C2)--(C4) hold with $\theta_*\gamma$ in place of $\gamma$,
$\bz_{\theta_*}$ satisfies \eqref{n:hyp} with $\theta_*\gamma$ in
place of $\gamma$, as shown at the beginning of the proof, and
$\bE_h(\bu_{\theta_*}^+)=E_0$. By \eqref{d2:orbit},
\eqref{n:conddrop} and \eqref{n:scaling}, we get
\[
E_0=\bE_h(\bu_{\theta_*}^+)\le\calL_{\theta_*}(\bz_{\theta_*}^+)
\le\calL_{\theta_*}(\bz_{\theta_*})\le E_0 .
\]
Therefore equality holds throughout, in particular
$\calL_{\theta_*}(\bz_{\theta_*})-\calL_{\theta_*}(\bz_{\theta_*}^+)=0$,
thus the right side of \eqref{n:conddrop} vanishes for that step, and
the equality case of Lemma~\ref{lem:conddescent} gives
$R_{\theta_*}(\bz_{\theta_*})=\theta_*R=0$ and $\bg(\bu)=0$. Since
$\theta_*>0$, we get $R=0$ and $\bg(\bu)=0$, which contradicts the
assumption that $R\neq0$ or $\bg(\bu)\neq0$. Thus
$\bE_h(\bu_\theta^+)\le E_0$ for every $\theta\in[0,1]$.
\end{proof}

\begin{proposition}[One-step estimates]\label{prop:onestep}
Let Assumption~\ref{asp:struct} hold, let $0<\gamma\le\gamma_{\max}$,
and assume $\bz\neq0$, $\bu>0$, $1-3\gamma D\le\norm{\bz}_2\le1$
and $\calL(\bz)\le E_0$. Then $\bz^+>0$ componentwise,
$1-3\gamma D\le\norm{\bz^+}_2\le1$, $\bE_h(\bu^+)\le E_0$, and
\begin{equation}
\calL(\bz)-\calL(\bz^{+})
\ \ge\ \tfrac14\,\gamma\norm{\bg(\bu)}_X^2+\frac{R^2}{2\gamma},
\qquad
\calL(\bz)-\calL(\bz^{+})
\ \ge\ \frac{5}{16\gamma}\,\norm{\bz^{+}-\bz}_{\calG}^2 .
\label{n:drop}
\end{equation}
\end{proposition}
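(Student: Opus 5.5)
The plan is to assemble the proposition from the lemmas of this section; no new estimate should be needed beyond combining constants. First, since $0<\gamma\le\gamma_{\max}$, Lemma~\ref{lem:conditions} gives (C1)--(C4). The hypothesis $\calL(\bz)\le E_0$ together with \eqref{d2:orbit} gives $\bE_h(\bu)\le\calL(\bz)\le E_0$. Hence \eqref{n:hyp} holds, and Lemma~\ref{lem:posradius} yields $\bz^+>0$ (in particular $\bz^+\neq0$, so $\bu^+$ is defined), the radial interval $1-3\gamma D\le\langle\bu,\bz^+\rangle_h\le\norm{\bz^+}_2\le1$, and $|R|\le3\gamma D$. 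Next, Lemma~\ref{lem:barrier}, whose hypotheses are exactly the current ones, gives $\bE_h(\bu^+)\le E_0$. With this, all hypotheses of Lemma~\ref{lem:conddescent} hold, and we obtain the unconditional descent
\[
\gamma\bigl(\calL(\bz)-\calL(\bz^+)\bigr)\ge\tfrac5{16}\norm{\bdelta}_{\calG}^2+\tfrac34R^2 .
\]

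For the second inequality in \eqref{n:drop}, I would use Lemma~\ref{lem:gradform}: $\norm{\bz^+-\bz}_{\calG}^2\le\norm{\bdelta}_{\calG}^2+R^2$. Since $\tfrac34\ge\tfrac5{16}$, the right side of the conditional descent is at least $\tfrac5{16}(\norm{\bdelta}_{\calG}^2+R^2)\ge\tfrac5{16}\norm{\bz^+-\bz}_{\calG}^2$. Dividing by $\gamma$ gives the claim.

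For the first inequality, multiply the first bound of Lemma~\ref{lem:gradform} by $\tfrac14$:
\[
\tfrac14\gamma^2\norm{\bg(\bu)}_X^2\le\tfrac5{16}\norm{\bdelta}_{\calG}^2+\tfrac52\gamma D\,R^2 .
\]
Therefore
\[
\tfrac5{16}\norm{\bdelta}_{\calG}^2+\tfrac34R^2\ge\tfrac14\gamma^2\norm{\bg(\bu)}_X^2+\bigl(\tfrac34-\tfrac52\gamma D\bigr)R^2 .
\]
By (C2), $\tfrac52\gamma D\le\tfrac5{256}<\tfrac14$, so the coefficient of $R^2$ is at least $\tfrac12$. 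Dividing by $\gamma$ gives $\calL(\bz)-\calL(\bz^+)\ge\tfrac14\gamma\norm{\bg(\bu)}_X^2+R^2/(2\gamma)$.

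The only delicate point is the order of the steps. Lemma~\ref{lem:barrier} must be invoked before Lemma~\ref{lem:conddescent}, since the conditional descent needs $\bE_h(\bu^+)\le E_0$. Likewise, Lemma~\ref{lem:posradius} must be applied first, to guarantee $\bz^+\neq0$ and the radial bound $\norm{\bz^+}_2\le1$ that both Lemma~\ref{lem:conddescent} and Lemma~\ref{lem:gradform} use. The rest is the constant bookkeeping above, which I expect to be routine.
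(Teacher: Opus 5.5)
Your proposal is correct and follows the paper's proof essentially step for step. You apply Lemma~\ref{lem:conditions}, then Lemma~\ref{lem:posradius}, Lemma~\ref{lem:barrier} and Lemma~\ref{lem:conddescent} in that order. You then combine the two bounds of Lemma~\ref{lem:gradform} with the same constant bookkeeping ($\tfrac14\cdot\tfrac54=\tfrac5{16}$, and $\tfrac52\gamma D\le\tfrac5{256}<\tfrac14$ by (C2)).
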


\begin{proof}
By Lemma~\ref{lem:conditions}, (C1)--(C4) hold. By
\eqref{d2:orbit}, $\bE_h(\bu)\le\calL(\bz)\le E_0$, thus $\bz$
satisfies \eqref{n:hyp}, and Lemma~\ref{lem:posradius} implies
$\bz^+>0$ and $1-3\gamma D\le\norm{\bz^+}_2\le1$,
Lemma~\ref{lem:barrier} implies $\bE_h(\bu^+)\le E_0$, and then
Lemma~\ref{lem:conddescent} implies
$\gamma(\calL(\bz)-\calL(\bz^+))\ge\frac5{16}\norm{\bdelta}_{\calG}^2+\frac34R^2$.
The first bound in \eqref{n:gradform} multiplied by $\frac14$ gives
$\frac5{16}\norm{\bdelta}_{\calG}^2\ge\frac14\gamma^2\norm{\bg(\bu)}_X^2
-\frac52\gamma D\,R^2$, thus
\[
\gamma\bigl(\calL(\bz)-\calL(\bz^+)\bigr)
\ge\tfrac14\gamma^2\norm{\bg(\bu)}_X^2+\bigl(\tfrac34-\tfrac52\gamma D\bigr)R^2
\ge\tfrac14\gamma^2\norm{\bg(\bu)}_X^2+\tfrac12R^2,
\]
since $\frac52\gamma D\le\frac{5}{256}<\frac14$ by (C2), and dividing
by $\gamma$ proves the first inequality in \eqref{n:drop}. The
second follows from
$\frac5{16}\norm{\bdelta}_{\calG}^2+\frac34R^2\ge\frac5{16}(\norm{\bdelta}_{\calG}^2+R^2)
\ge\frac5{16}\norm{\bz^+-\bz}_{\calG}^2$, by the second
bound in \eqref{n:gradform}, divided by $\gamma$.
\end{proof}

\section{Global convergence to the ground state}
\label{sec:convergence}

We now prove Theorem~\ref{thm:main} by induction, and then prove the
local linear rate. In particular, the induction is used to prove the
following invariance:
\begin{equation}
\bu^k>0,\qquad 1-3\gamma D\le\norm{\bz^k}_2\le1,\qquad
\calL^k\le E_0 .
\label{d2:invariant}
\end{equation}
The second clause contains $\bz^k\neq0$, so that $\bu^k$, $R^k$ and
$\calL^k$ are defined, and the third clause contains the energy bound
$\bE_h(\bu^k)\le E_0$. The second and third clauses imply, by
$R^k=\norm{\bz^k}_2-1+\gamma\lambda(\bu^k)$ and
Lemma~\ref{lem:ebd0}(iv), the residual bound $|R^k|\le3\gamma D$.

\begin{proof}[Proof of Theorem~\ref{thm:main}]
We first prove by induction that \eqref{d2:invariant}
holds at every step $k\ge0$, together with the drop
\begin{equation}
\calL^k-\calL^{k+1}
\ \ge\ \tfrac14\,\gamma\norm{\bg(\bu^k)}_X^2+\frac{(R^k)^2}{2\gamma},
\qquad
\calL^k-\calL^{k+1}
\ \ge\ \frac{5}{16\gamma}\norm{\bz^{k+1}-\bz^k}_{\calG}^2 .
\label{d2:drop}
\end{equation}
At $k=0$, $\bu^0>0$ and $\norm{\bz^0}_2=1$ by hypothesis, and
$\calL^0\le E_0$ by Lemma~\ref{lem:conditions}, thus
\eqref{d2:invariant} holds at step $0$. Suppose that
\eqref{d2:invariant} holds at step $k$. Then $\bz^k$ satisfies the
hypotheses of Proposition~\ref{prop:onestep}, which implies
$\bz^{k+1}>0$, thus $\bu^{k+1}=\bz^{k+1}/\norm{\bz^{k+1}}_2>0$, the
radial interval $1-3\gamma D\le\norm{\bz^{k+1}}_2\le1$, and the drop
\eqref{d2:drop} at step $k$, whose right sides are nonnegative, thus
$\calL^{k+1}\le\calL^k\le E_0$. Hence \eqref{d2:invariant} holds at
step $k+1$. By induction, \eqref{d2:invariant} and \eqref{d2:drop}
hold at every step $k\ge0$. In particular $\bu^k>0$ and $\bz^k\neq0$
for every $k\ge0$, so the whole sequence is well defined.

Next we prove the summability of the descent terms.
Summing \eqref{d2:drop} over $0\le k\le m$, the left side
equals $\calL^0-\calL^{m+1}\le\calL^0$, since
$\calL^{m+1}\ge\bE_h(\bu^{m+1})\ge0$, every term of $\bE_h$ being
nonnegative under Assumption~\ref{asp:struct}. Thus for every $m$ we
have
\[
\tfrac14\sum_{k=0}^m\gamma\norm{\bg^k}_X^2
+\frac12\sum_{k=0}^m\frac{(R^k)^2}{\gamma}\ \le\ \calL^0 ,
\qquad
\frac{5}{16\gamma}\sum_{k=0}^m\norm{\bz^{k+1}-\bz^k}_{\calG}^2
\ \le\ \calL^0 ,
\]
and letting $m\to\infty$, we get
\[
\sum_{k\ge0}\gamma\norm{\bg^k}_X^2\le4\,\calL^0,
\qquad
\sum_{k\ge0}\frac{(R^k)^2}{\gamma}\le2\,\calL^0,
\qquad
\sum_{k\ge0}\norm{\bz^{k+1}-\bz^k}_{\calG}^2\le\tfrac{16}{5}\,\gamma\,\calL^0 .
\]
The terms of these convergent series tend to zero and $\gamma>0$ is
fixed, thus $\norm{\bg^k}_X\to0$ and $R^k\to0$. With $\mu_{\max}$
the largest eigenvalue of $-\Delta_h$, the spectral representation
of $\norm{\cdot}_X$ in \S\ref{ssec:scheme} gives, for every
$\bv\in\bR^N$,
\begin{equation}
\norm{\bv}_2^2=\sum_j\hat v_j^2
\le(1+\gamma\mu_{\max})\sum_j\frac{\hat v_j^2}{1+\gamma\mu_j}
=(1+\gamma\mu_{\max})\norm{\bv}_X^2 ,
\label{n:Xequiv}
\end{equation}
thus also $\norm{\bg^k}_2\to0$.

Next we consider the accumulation points. The manifold
$\calM$ is a closed bounded
subset of $\bR^N$, thus compact, so $(\bu^k)$ has accumulation
points. Let $\bu^{k_j}\to\bu^*$. Each component of
$\bg(\bu)=\nabla\bE_h(\bu)-\lambda(\bu)\bu$ in
\eqref{d2:grad} is a
polynomial in the components of $\bu$, so $\bg$ is
continuous and $\bg(\bu^{k_j})
\to\bg(\bu^*)$, while
$\norm{\bg(\bu^k)}_2\to0$ by the previous
paragraph. Thus
$\bg(\bu^*)=0$, so $\bu^*$ is a critical point of
$\bE_h|_{\calM}$. By \eqref{d2:invariant} we have
$\bu^k>0$ for every $k\ge0$, and passing to the limit along
$\bu^{k_j}\to\bu^*$ we get $\bu^*\ge0$.
The strict positivity of $\bu^*$ will follow from
Remark~\ref{d2:rem:gs}.

Finally we identify the limit. Every accumulation point $\bu^*$
is thus a nonnegative critical point of $\bE_h|_{\calM}$, and by
Remark~\ref{d2:rem:gs} the only such point is $\bu_{\mathrm{GS}}$.
Thus $\bu^*=\bu_{\mathrm{GS}}$ for every accumulation point.
Suppose that $\bu^k\not\to\bu_{\mathrm{GS}}$. Then there
are $\epsilon>0$ and a subsequence $(\bu^{k_j})$ with
$\norm{\bu^{k_j}-\bu_{\mathrm{GS}}}_2\ge\epsilon$ for every $j$, which
has an accumulation point by the compactness of $\calM$, and this
accumulation point is at distance at least $\epsilon$ from
$\bu_{\mathrm{GS}}$, a contradiction. Thus $\bu^k\to\bu_{\mathrm{GS}}$. Since $\bE_h$ is
continuous, we get
$\bE_h(\bu^k)\to\bE_h(\bu_{\mathrm{GS}})=\min_{\calM}\bE_h$, the
equality again by Remark~\ref{d2:rem:gs}.
\end{proof}

Once the iterates are close to the ground state, the descent
inequality \eqref{d2:drop} implies a linear rate. The rate is a
statement for the fixed mesh, and neither the index from which it
holds nor its constant is uniform in $h$.

\begin{theorem}[Local linear convergence]\label{thm:rate}
Under the hypotheses of Theorem~\ref{thm:main} and with $N\ge2$, write
$E_{\mathrm{GS}}:=\bE_h(\bu_{\mathrm{GS}})$,
$\lambda_{\mathrm{GS}}:=\lambda(\bu_{\mathrm{GS}})$ and
$m_{\mathrm{GS}}:=\min_i(\bu_{\mathrm{GS}})_i>0$, let $\bn'$ be as in
\eqref{n:remainder} and $A_{\bu}$ as in \eqref{eq:A_u}, let
\begin{equation}
\calH:=-\Delta_h+\bn'(\bu_{\mathrm{GS}})-\lambda_{\mathrm{GS}}\bbI
=A_{\bu_{\mathrm{GS}}}-\lambda_{\mathrm{GS}}\bbI
+2\beta\diag(\bu_{\mathrm{GS}}^2),
\label{n:hessian}
\end{equation}
and let $\nu$ be the minimum of $\langle\bw,\calH\bw\rangle_h$
over all $\bw\in\calM$ with
$\langle\bu_{\mathrm{GS}},\bw\rangle_h=0$. Then
$\nu\ge2\beta m_{\mathrm{GS}}^2>0$, and with $\mu_{\max}$ the
largest eigenvalue of $-\Delta_h$ and
\begin{equation}
\eta:=\min\Bigl\{\frac{\gamma\nu}{4(1+\gamma\mu_{\max})},\
\frac12\Bigr\}\in(0,1),
\label{n:rate}
\end{equation}
there is an index $K$ such that, for every $k\ge K$,
\begin{equation}
\begin{gathered}
\calL^{k+1}-E_{\mathrm{GS}}\le(1-\eta)\bigl(\calL^k-E_{\mathrm{GS}}\bigr),
\\
\bE_h(\bu^k)-E_{\mathrm{GS}}+\frac{(R^k)^2}{\gamma}
+\frac{\nu}{4}\norm{\bu^k-\bu_{\mathrm{GS}}}_2^2
\le2\,(1-\eta)^{k-K}\bigl(\calL^K-E_{\mathrm{GS}}\bigr).
\end{gathered}
\label{n:geometric}
\end{equation}
\end{theorem}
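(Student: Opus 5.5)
We prove Theorem~\ref{thm:rate} in three steps: a lower bound on $\nu$, a local quadratic growth of the energy near $\bu_{\mathrm{GS}}$, and a Łojasiewicz-type gradient inequality fed into the drop \eqref{d2:drop}.

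\emph{Lower bound on $\nu$.} By \eqref{n:hessian}, $\calH=(A_{\bu_{\mathrm{GS}}}-\lambda_{\mathrm{GS}}\bbI)+2\beta\diag(\bu_{\mathrm{GS}}^2)$. By Remark~\ref{d2:rem:gs}, $\bbM A_{\bu_{\mathrm{GS}}}$ is an irreducible symmetric M-matrix with the positive eigenvector $\bu_{\mathrm{GS}}$ and eigenvalue $\lambda_{\mathrm{GS}}$. By Perron--Frobenius, $\lambda_{\mathrm{GS}}$ is its smallest eigenvalue in the $h$-metric, so $\langle\bw,(A_{\bu_{\mathrm{GS}}}-\lambda_{\mathrm{GS}})\bw\rangle_h\ge0$ for every $\bw$. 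The second term gives $2\beta\sum_iw_i(\bu_{\mathrm{GS}})_i^2w_i^2\ge2\beta m_{\mathrm{GS}}^2\norm{\bw}_2^2$. Hence $\nu\ge2\beta m_{\mathrm{GS}}^2$. The assumption $N\ge2$ only guarantees that the orthogonal complement is nonempty.

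\emph{Local quadratic growth.} Write $\bu\in\calM$ as $\bu=\cos t\,\bu_{\mathrm{GS}}+\sin t\,\bw$ with $\bw$ a unit vector orthogonal to $\bu_{\mathrm{GS}}$. A second-order expansion of $\bE_h$ restricted to $\calM$ shows that its Riemannian Hessian at $\bu_{\mathrm{GS}}$ is the quadratic form of $\calH$ on the tangent space. Since $\bE_h$ is a polynomial, there is a neighborhood in which
\[
\bE_h(\bu)-E_{\mathrm{GS}}\ge\tfrac{3\nu}{8}\norm{\bu-\bu_{\mathrm{GS}}}_2^2,
\qquad
\norm{\bg(\bu)}_2^2\ge\tfrac{\nu}{2}\bigl(\bE_h(\bu)-E_{\mathrm{GS}}\bigr).
\]
The second bound is the Polyak--Łojasiewicz inequality, obtained as follows. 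The first-order expansion gives $\bg(\bu)\approx\calP\calH\,(\bu-\bu_{\mathrm{GS}})$. The energy gap satisfies $\bE_h(\bu)-E_{\mathrm{GS}}\approx\frac12\langle\bxi,\calH\bxi\rangle_h$ for the tangential displacement $\bxi$. Then $\norm{\calH\bxi}^2\ge\nu\langle\bxi,\calH\bxi\rangle$, because on the tangent space $\calH$ is self-adjoint with spectrum $\ge\nu$. The constants $3/8$ and $1/2$ absorb the higher-order errors once $k\ge K$. Such a $K$ exists because $\bu^k\to\bu_{\mathrm{GS}}$ by Theorem~\ref{thm:main}.

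\emph{Contraction.} Use the first drop in \eqref{d2:drop} together with \eqref{n:Xequiv}:
\[
\calL^k-\calL^{k+1}\ge\frac{\gamma}{4(1+\gamma\mu_{\max})}\norm{\bg^k}_2^2+\frac{(R^k)^2}{2\gamma}.
\]
Insert the PL inequality to obtain
\[
\calL^k-\calL^{k+1}\ge\frac{\gamma\nu}{8(1+\gamma\mu_{\max})}\bigl(\bE_h(\bu^k)-E_{\mathrm{GS}}\bigr)+\frac12\,\frac{(R^k)^2}{\gamma}.
\]
Since $\calL^k-E_{\mathrm{GS}}=(\bE_h(\bu^k)-E_{\mathrm{GS}})+(R^k)^2/\gamma$, the right side is at least $\eta(\calL^k-E_{\mathrm{GS}})$, possibly with a factor-of-two adjustment in the PL constant to match \eqref{n:rate} exactly. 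This gives the first line of \eqref{n:geometric}; iterating yields $\calL^k-E_{\mathrm{GS}}\le(1-\eta)^{k-K}(\calL^K-E_{\mathrm{GS}})$. For the second line, quadratic growth gives $\frac{\nu}{4}\norm{\bu^k-\bu_{\mathrm{GS}}}_2^2\le\bE_h(\bu^k)-E_{\mathrm{GS}}\le\calL^k-E_{\mathrm{GS}}$. Adding $\calL^k-E_{\mathrm{GS}}$ accounts for the factor $2$.

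\emph{Main obstacle.} The main difficulty is the PL constant in the second step: matching the exact numerical factor in $\eta$ requires careful bookkeeping of the second-order errors in the expansion of $\bg$. I would first prove the PL inequality with any constant strictly below the sharp one, with $\nu/2$ as the target, and then shrink the neighborhood until the errors fit.
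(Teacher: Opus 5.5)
Your overall structure matches the paper's: the same Perron--Frobenius bound for $\nu$, the same combination of the first drop in \eqref{d2:drop} with \eqref{n:Xequiv}, and the same derivation of the second line of \eqref{n:geometric}. (In your sum for $\nu$, $w_i^2$ should be $(\bw)_i^2$; you mixed the quadrature weights with the components of $\bw$.)

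\textbf{The PL constant is too weak as written.} The Polyak--\L{}ojasiewicz inequality you target does not give the stated $\eta$. With $\norm{\bg(\bu)}_2^2\ge\frac\nu2(\bE_h(\bu)-E_{\mathrm{GS}})$, the coefficient of $\bE_h(\bu^k)-E_{\mathrm{GS}}$ is only $\frac{\gamma\nu}{8(1+\gamma\mu_{\max})}$. The resulting contraction factor $\min\{\frac{\gamma\nu}{8(1+\gamma\mu_{\max})},\frac12\}$ is strictly smaller than $\eta$ whenever $\gamma\nu<4(1+\gamma\mu_{\max})$. The constants $\frac14,\frac12$ in \eqref{d2:drop} and the factor in \eqref{n:Xequiv} are fixed, so the factor two must come from the PL constant. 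What you need is $\norm{\bg(\bu)}_2^2\ge\nu(\bE_h(\bu)-E_{\mathrm{GS}})$ near $\bu_{\mathrm{GS}}$, not $\frac\nu2$.

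Your own linearization already gives this, with room to spare:
\begin{itemize}
\item For the tangential displacement $\bxi$, one has $\calP_{\bu_{\mathrm{GS}}}\bg(\bu)=\calP_{\bu_{\mathrm{GS}}}\calH\bxi+O(\norm{\bxi}_2^2)$ and $\norm{\bg(\bu)}_2\ge\norm{\calP_{\bu_{\mathrm{GS}}}\bg(\bu)}_2$.
\item The compression of $\calH$ to the tangent space is $h$-self-adjoint with spectrum at least $\nu$, so $\norm{\calP_{\bu_{\mathrm{GS}}}\calH\bxi}_2^2\ge\nu\langle\bxi,\calH\bxi\rangle_h$. This must be stated for $\calP_{\bu_{\mathrm{GS}}}\calH$ on the tangent space, not for $\calH$.
\item Meanwhile $\bE_h(\bu)-E_{\mathrm{GS}}=\frac12\langle\bxi,\calH\bxi\rangle_h+O(\norm{\bxi}_2^3)$, and $\langle\bxi,\calH\bxi\rangle_h\ge\nu\norm{\bxi}_2^2$ absorbs the cubic errors.
\end{itemize}
So the sharp local constant is $2\nu$, and $\nu$ holds on a small enough neighborhood.

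\textbf{Comparison with the paper's route.} With that correction, your argument is a valid alternative route to the key inequality. The paper never linearizes $\bg$. Instead it:
\begin{itemize}
\item uses the chart $\bv(\bxi)=(\bu_{\mathrm{GS}}+\bxi)/\sqrt{1+\norm{\bxi}_2^2}$ on the tangent space, with $f=\bE_h\circ\bv$;
\item reads off $\nabla f(0)=0$ and $\nabla^2 f(0)=\calH$ on that space from a second-order expansion at the single point $0$;
\item picks $r$ by continuity so that $\langle\bw,\nabla^2f(\bxi)\bw\rangle_h\ge\frac\nu2\norm{\bw}_2^2$ for $\norm{\bxi}_2\le r$;
\item gets both $f(\bxi)-E_{\mathrm{GS}}\ge\frac\nu4\norm{\bxi}_2^2$ and $f(\bxi)-E_{\mathrm{GS}}\le\frac1\nu\norm{\nabla f(\bxi)}_2^2$ from Taylor's formula with integral remainder, as for a strongly convex function.
\end{itemize}
The transfer to the scheme then uses two exact, non-asymptotic inequalities:
\begin{itemize}
\item $\norm{\nabla f(\bxi)}_2\le\norm{\bg(\bv(\bxi))}_2$, from the chain rule, $\langle\bv(\bxi),\bv'(\bxi)\bw\rangle_h=0$ and $\norm{\bv'(\bxi)\bw}_2\le\norm{\bw}_2$;
\item $\norm{\bv(\bxi)-\bu_{\mathrm{GS}}}_2\le\norm{\bxi}_2$.
\end{itemize}
That route produces the constant $\nu$ directly and needs no remainder estimate for $\bg$. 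Its price is the factor two lost to the uniform Hessian bound $\frac\nu2$. Yours exposes the sharper asymptotic constant $2\nu$, but requires explicit second-order control of the linearization of $\bg$ and of $\bu-\bu_{\mathrm{GS}}-\bxi$.
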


\begin{proof}
We first prove the lower bound on $\nu$. By Appendix~\ref{app:fem},
$\bu_{\mathrm{GS}}>0$ is an eigenvector of the $h$-self-adjoint
operator $A_{\bu_{\mathrm{GS}}}$ belonging to its smallest
eigenvalue. Since $\nabla\bE_h(\bu)=A_{\bu}\bu$ by \S\ref{ssec:norms}
and $\bg(\bu_{\mathrm{GS}})=0$, \eqref{d2:grad} gives
$A_{\bu_{\mathrm{GS}}}\bu_{\mathrm{GS}}=\nabla\bE_h(\bu_{\mathrm{GS}})
=\lambda_{\mathrm{GS}}\bu_{\mathrm{GS}}$, thus this smallest
eigenvalue is $\lambda_{\mathrm{GS}}$, and since the Rayleigh
quotient of an $h$-self-adjoint operator is at least its smallest
eigenvalue, $\langle\bw,A_{\bu_{\mathrm{GS}}}\bw\rangle_h
\ge\lambda_{\mathrm{GS}}\norm{\bw}_2^2$ for every $\bw$. By
\eqref{n:hessian} and $(\bu_{\mathrm{GS}})_i^2\ge m_{\mathrm{GS}}^2$
for every $i$, we get
\begin{align*}
\langle\bw,\calH\bw\rangle_h
&=\langle\bw,A_{\bu_{\mathrm{GS}}}\bw\rangle_h-\lambda_{\mathrm{GS}}\norm{\bw}_2^2
+2\beta\langle\bw,\diag(\bu_{\mathrm{GS}}^2)\bw\rangle_h
\\
&\ge2\beta\langle\bw,\diag(\bu_{\mathrm{GS}}^2)\bw\rangle_h
\ge2\beta m_{\mathrm{GS}}^2\norm{\bw}_2^2,
\end{align*}
and taking the minimum over $\bw\in\calM$ with
$\langle\bu_{\mathrm{GS}},\bw\rangle_h=0$ gives
$\nu\ge2\beta m_{\mathrm{GS}}^2$.

Next we introduce a chart around the ground state. Let
$T_{\mathrm{GS}}:=\{\bxi:\langle\bu_{\mathrm{GS}},\bxi\rangle_h=0\}$ be the
tangent space of $\calM$ at $\bu_{\mathrm{GS}}$ and, for
$\bxi\in T_{\mathrm{GS}}$,
\[
\bv(\bxi):=\frac{\bu_{\mathrm{GS}}+\bxi}{\sqrt{1+\norm{\bxi}_2^2}}\in\calM,
\qquad
f(\bxi):=\bE_h(\bv(\bxi)) ,
\]
where $\norm{\bu_{\mathrm{GS}}+\bxi}_2^2=1+\norm{\bxi}_2^2$ by
$\norm{\bu_{\mathrm{GS}}}_2=1$ and
$\langle\bu_{\mathrm{GS}},\bxi\rangle_h=0$. Every $\bu\in\calM$ with
$\langle\bu,\bu_{\mathrm{GS}}\rangle_h>0$ is of this form: for
$\bxi=\bu/\langle\bu,\bu_{\mathrm{GS}}\rangle_h-\bu_{\mathrm{GS}}$ we
have $\langle\bu_{\mathrm{GS}},\bxi\rangle_h=1-1=0$,
$\norm{\bxi}_2^2=\langle\bu,\bu_{\mathrm{GS}}\rangle_h^{-2}-2+1
=\langle\bu,\bu_{\mathrm{GS}}\rangle_h^{-2}-1$ and
$\bv(\bxi)=\bigl(\bu/\langle\bu,\bu_{\mathrm{GS}}\rangle_h\bigr)
\big/\sqrt{\langle\bu,\bu_{\mathrm{GS}}\rangle_h^{-2}}=\bu$.
By $(1+p)^{-1/2}=1-\frac p2+O(p^2)$ with $p=\norm{\bxi}_2^2$,
\[
\bv(\bxi)-\bu_{\mathrm{GS}}=\bxi-\tfrac12\norm{\bxi}_2^2\bu_{\mathrm{GS}}
+O(\norm{\bxi}_2^3).
\]
The gradient of $\bE_h$ is $\nabla\bE_h(\bu)=-\Delta_h\bu+\bn(\bu)$
by \eqref{n:ndef}, and the Jacobian of $\bn$ is $\bn'$ of
\eqref{n:remainder}, thus the Hessian of $\bE_h$ with respect to
$\langle\cdot,\cdot\rangle_h$ is $-\Delta_h+\bn'(\bu)$. Since $\bE_h$
is a polynomial of degree four, its Taylor expansion at
$\bu_{\mathrm{GS}}$ reads, for every $\bv\in\bR^N$,
\begin{align*}
\bE_h(\bv)=E_{\mathrm{GS}}
&+\langle\nabla\bE_h(\bu_{\mathrm{GS}}),\bv-\bu_{\mathrm{GS}}\rangle_h
\\
&+\tfrac12\bigl\langle\bv-\bu_{\mathrm{GS}},\bigl(-\Delta_h+\bn'(\bu_{\mathrm{GS}})\bigr)(\bv-\bu_{\mathrm{GS}})\bigr\rangle_h
+O(\norm{\bv-\bu_{\mathrm{GS}}}_2^3).
\end{align*}
We take $\bv=\bv(\bxi)$, so that $\bE_h(\bv)=f(\bxi)$,
$\bv-\bu_{\mathrm{GS}}=\bxi-\tfrac12\norm{\bxi}_2^2\bu_{\mathrm{GS}}+O(\norm{\bxi}_2^3)$
and $\norm{\bv-\bu_{\mathrm{GS}}}_2=O(\norm{\bxi}_2)$. With
$\nabla\bE_h(\bu_{\mathrm{GS}})=\lambda_{\mathrm{GS}}\bu_{\mathrm{GS}}$,
$\langle\bu_{\mathrm{GS}},\bxi\rangle_h=0$ and
$\norm{\bu_{\mathrm{GS}}}_2=1$, the linear and the quadratic terms are
\begin{gather*}
\langle\nabla\bE_h(\bu_{\mathrm{GS}}),\bv-\bu_{\mathrm{GS}}\rangle_h
=\lambda_{\mathrm{GS}}\langle\bu_{\mathrm{GS}},\bv-\bu_{\mathrm{GS}}\rangle_h
=-\tfrac12\lambda_{\mathrm{GS}}\norm{\bxi}_2^2+O(\norm{\bxi}_2^3),
\\
\tfrac12\bigl\langle\bv-\bu_{\mathrm{GS}},\bigl(-\Delta_h+\bn'(\bu_{\mathrm{GS}})\bigr)(\bv-\bu_{\mathrm{GS}})\bigr\rangle_h
=\tfrac12\bigl\langle\bxi,\bigl(-\Delta_h+\bn'(\bu_{\mathrm{GS}})\bigr)\bxi\bigr\rangle_h
+O(\norm{\bxi}_2^3),
\end{gather*}
thus the Taylor expansion of $f$ at $\bxi=0$ is
\[
f(\bxi)=E_{\mathrm{GS}}-\tfrac12\lambda_{\mathrm{GS}}\norm{\bxi}_2^2
+\tfrac12\bigl\langle\bxi,\bigl(-\Delta_h+\bn'(\bu_{\mathrm{GS}})\bigr)\bxi\bigr\rangle_h
+O(\norm{\bxi}_2^3)
=E_{\mathrm{GS}}+\tfrac12\langle\bxi,\calH\bxi\rangle_h+O(\norm{\bxi}_2^3),
\]
where the last equality is \eqref{n:hessian}. Here $\bxi$ ranges
over the linear space $T_{\mathrm{GS}}$ and need not lie on $\calM$,
only the point $\bv(\bxi)$ does. Since $\bE_h$ is a polynomial and
$\bv$ is smooth on $T_{\mathrm{GS}}$, $f$ is a smooth function on
$T_{\mathrm{GS}}$. Since $f$ is defined on the inner product space
$T_{\mathrm{GS}}$, its gradient at $\bxi$ is the unique vector
$\nabla f(\bxi)\in T_{\mathrm{GS}}$ with
$\langle\nabla f(\bxi),\bw\rangle_h=\frac{d}{dt}f(\bxi+t\bw)\big|_{t=0}$
for all $\bw\in T_{\mathrm{GS}}$, and its Hessian at $\bxi$ is the
$h$-self-adjoint operator $\nabla^2f(\bxi)$ on $T_{\mathrm{GS}}$ with
$\langle\bw,\nabla^2f(\bxi)\bw\rangle_h=\frac{d^2}{dt^2}f(\bxi+t\bw)\big|_{t=0}$
for all $\bw\in T_{\mathrm{GS}}$. By the uniqueness of the second-order
Taylor polynomial of a smooth function, the expansion of $f$ at
$\bxi=0$ gives $\nabla f(0)=0$ and
$\langle\bw,\nabla^2f(0)\bw\rangle_h=\langle\bw,\calH\bw\rangle_h$ for
$\bw\in T_{\mathrm{GS}}$, which is at least $\nu\norm{\bw}_2^2$ by the
definition of $\nu$ applied to $\bw/\norm{\bw}_2$ for $\bw\neq0$.
Let $\mathbf e_1,\dots,\mathbf e_{N-1}$ be an
$\langle\cdot,\cdot\rangle_h$-orthonormal basis of $T_{\mathrm{GS}}$.
Expanding $\bw=\sum_{i=1}^{N-1}\langle\bw,\mathbf e_i\rangle_h\mathbf e_i$
in both arguments and using the Cauchy--Schwarz inequality for the
sums over the index pairs $(i,j)$, we have, for all
$\bxi,\bw\in T_{\mathrm{GS}}$,
\begin{align}
\bigl|\bigl\langle\bw,\bigl(\nabla^2f(\bxi)-\nabla^2f(0)\bigr)\bw\bigr\rangle_h\bigr|
&=\Bigl|\sum_{i,j=1}^{N-1}\langle\bw,\mathbf e_i\rangle_h\langle\bw,\mathbf e_j\rangle_h
\bigl\langle\mathbf e_i,\bigl(\nabla^2f(\bxi)-\nabla^2f(0)\bigr)\mathbf e_j\bigr\rangle_h\Bigr|
\notag\\
&\le\Bigl(\sum_{i,j=1}^{N-1}
\bigl\langle\mathbf e_i,\bigl(\nabla^2f(\bxi)-\nabla^2f(0)\bigr)\mathbf e_j\bigr\rangle_h^2
\Bigr)^{1/2}
\Bigl(\sum_{i,j=1}^{N-1}\langle\bw,\mathbf e_i\rangle_h^2\langle\bw,\mathbf e_j\rangle_h^2\Bigr)^{1/2},
\label{n:frobenius}
\end{align}
where the last factor equals $\norm{\bw}_2^2$, since
$\sum_{i,j=1}^{N-1}\langle\bw,\mathbf e_i\rangle_h^2\langle\bw,\mathbf e_j\rangle_h^2
=\bigl(\sum_{i=1}^{N-1}\langle\bw,\mathbf e_i\rangle_h^2\bigr)^2=\norm{\bw}_2^4$,
and the other factor is the Frobenius norm of the matrix of
$\nabla^2f(\bxi)-\nabla^2f(0)$ in this basis. Its entries are
continuous functions of $\bxi$ that vanish at $\bxi=0$, thus there is
$r>0$ such that the right side of \eqref{n:frobenius} is at most
$\frac\nu2\norm{\bw}_2^2$ for $\norm{\bxi}_2\le r$. Then, for all
$\bxi,\bw\in T_{\mathrm{GS}}$ with $\norm{\bxi}_2\le r$,
\[
\langle\bw,\nabla^2f(\bxi)\bw\rangle_h
=\langle\bw,\calH\bw\rangle_h
+\bigl\langle\bw,\bigl(\nabla^2f(\bxi)-\nabla^2f(0)\bigr)\bw\bigr\rangle_h
\ge\nu\norm{\bw}_2^2-\tfrac\nu2\norm{\bw}_2^2
=\tfrac\nu2\norm{\bw}_2^2 .
\]
For such $\bxi$, Taylor's formula with integral remainder along the
segment from $0$ to $\bxi$ and $\nabla f(0)=0$ give
\[
f(\bxi)-E_{\mathrm{GS}}
=\int_0^1(1-t)\langle\bxi,\nabla^2f(t\bxi)\bxi\rangle_h\,dt
\ge\tfrac\nu2\norm{\bxi}_2^2\int_0^1(1-t)\,dt=\tfrac\nu4\norm{\bxi}_2^2,
\]
and along the segment from $\bxi$ to $0$, together with the
Cauchy--Schwarz inequality and $as-\frac\nu4s^2\le\frac{a^2}{\nu}$
for real $a$ and $s$,
\begin{align*}
E_{\mathrm{GS}}-f(\bxi)
&=-\langle\nabla f(\bxi),\bxi\rangle_h
+\int_0^1(1-t)\langle\bxi,\nabla^2f((1-t)\bxi)\bxi\rangle_h\,dt
\\
&\ge-\norm{\nabla f(\bxi)}_2\norm{\bxi}_2+\tfrac\nu4\norm{\bxi}_2^2
\ge-\frac1\nu\norm{\nabla f(\bxi)}_2^2 ,
\end{align*}
Thus, for $\norm{\bxi}_2\le r$,
\begin{equation}
f(\bxi)-E_{\mathrm{GS}}\ge\tfrac\nu4\norm{\bxi}_2^2,
\qquad
f(\bxi)-E_{\mathrm{GS}}\le\frac1\nu\norm{\nabla f(\bxi)}_2^2 .
\label{n:strongconvex}
\end{equation}

Next we estimate the gradient and the distance in the chart. For
$\bw\in T_{\mathrm{GS}}$, differentiating $\bv(\bxi)$ in the direction
$\bw$ gives
\[
\bv'(\bxi)\bw=\bigl(1+\norm{\bxi}_2^2\bigr)^{-1/2}\bw
-\bigl(1+\norm{\bxi}_2^2\bigr)^{-3/2}
\langle\bxi,\bw\rangle_h(\bu_{\mathrm{GS}}+\bxi).
\]
Differentiating $\norm{\bv(\bxi)}_2^2=1$ gives
$\langle\bv(\bxi),\bv'(\bxi)\bw\rangle_h=0$. Expanding the square,
with $\langle\bu_{\mathrm{GS}}+\bxi,\bw\rangle_h=\langle\bxi,\bw\rangle_h$
and $\norm{\bu_{\mathrm{GS}}+\bxi}_2^2=1+\norm{\bxi}_2^2$, we have
\[
\norm{\bv'(\bxi)\bw}_2^2
=\bigl(1+\norm{\bxi}_2^2\bigr)^{-1}\norm{\bw}_2^2
-2\bigl(1+\norm{\bxi}_2^2\bigr)^{-2}\langle\bxi,\bw\rangle_h^2
+\bigl(1+\norm{\bxi}_2^2\bigr)^{-2}\langle\bxi,\bw\rangle_h^2
\le\norm{\bw}_2^2 .
\]
By the chain rule, $\nabla\bE_h(\bv)=\bg(\bv)+\lambda(\bv)\bv$ from
\eqref{d2:grad}, $\langle\bv(\bxi),\bv'(\bxi)\bw\rangle_h=0$, the
Cauchy--Schwarz inequality and $\norm{\bv'(\bxi)\bw}_2\le\norm{\bw}_2$,
\[
\langle\nabla f(\bxi),\bw\rangle_h
=\langle\nabla\bE_h(\bv(\bxi)),\bv'(\bxi)\bw\rangle_h
=\langle\bg(\bv(\bxi)),\bv'(\bxi)\bw\rangle_h
\le\norm{\bg(\bv(\bxi))}_2\norm{\bw}_2 ,
\]
and the choice $\bw=\nabla f(\bxi)\in T_{\mathrm{GS}}$ gives
$\norm{\nabla f(\bxi)}_2\le\norm{\bg(\bv(\bxi))}_2$. In
addition, by $\norm{\bv(\bxi)}_2=\norm{\bu_{\mathrm{GS}}}_2=1$,
$\langle\bxi,\bu_{\mathrm{GS}}\rangle_h=0$ and
$(1+p)^{-1/2}\ge1-\frac p2$ for $p\ge0$,
\[
\norm{\bv(\bxi)-\bu_{\mathrm{GS}}}_2^2
=2-2\langle\bv(\bxi),\bu_{\mathrm{GS}}\rangle_h
=2-2\bigl(1+\norm{\bxi}_2^2\bigr)^{-1/2}\le\norm{\bxi}_2^2 .
\]

Finally we prove the rate. By Theorem~\ref{thm:main},
$\bu^k\to\bu_{\mathrm{GS}}$, thus
$\langle\bu^k,\bu_{\mathrm{GS}}\rangle_h\to1$, and there is $K$ such
that, for all $k\ge K$, $\langle\bu^k,\bu_{\mathrm{GS}}\rangle_h>0$
and $\langle\bu^k,\bu_{\mathrm{GS}}\rangle_h^{-2}-1\le r^2$, i.e.,
$\bu^k=\bv(\bxi_k)$ with $\bxi_k\in T_{\mathrm{GS}}$ and
$\norm{\bxi_k}_2\le r$ by the chart. For such $k$,
\eqref{n:strongconvex},
$\norm{\nabla f(\bxi_k)}_2\le\norm{\bg(\bu^k)}_2$, \eqref{n:Xequiv}
and $\norm{\bu^k-\bu_{\mathrm{GS}}}_2\le\norm{\bxi_k}_2$ give
\begin{gather*}
\bE_h(\bu^k)-E_{\mathrm{GS}}\le\frac1\nu\norm{\bg(\bu^k)}_2^2
\le\frac{1+\gamma\mu_{\max}}{\nu}\norm{\bg(\bu^k)}_X^2,
\\
\norm{\bu^k-\bu_{\mathrm{GS}}}_2^2\le\norm{\bxi_k}_2^2
\le\frac4\nu\bigl(\bE_h(\bu^k)-E_{\mathrm{GS}}\bigr).
\end{gather*}
Plugging the first bound into the first inequality of
\eqref{d2:drop}, and using
$\eta\le\frac{\gamma\nu}{4(1+\gamma\mu_{\max})}$ and $\eta\le\frac12$
from \eqref{n:rate}, $\bE_h(\bu^k)-E_{\mathrm{GS}}\ge0$ and
$\calL^k-E_{\mathrm{GS}}=\bE_h(\bu^k)-E_{\mathrm{GS}}+(R^k)^2/\gamma$
from \eqref{d2:orbit}, we obtain
\[
\calL^k-\calL^{k+1}
\ge\frac{\gamma\nu}{4(1+\gamma\mu_{\max})}
\bigl(\bE_h(\bu^k)-E_{\mathrm{GS}}\bigr)
+\frac12\,\frac{(R^k)^2}{\gamma}
\ge\eta\bigl(\calL^k-E_{\mathrm{GS}}\bigr),
\]
and subtracting both sides from $\calL^k-E_{\mathrm{GS}}$ gives the
first inequality in \eqref{n:geometric}. By induction,
$\calL^k-E_{\mathrm{GS}}\le(1-\eta)^{k-K}(\calL^K-E_{\mathrm{GS}})$ for
$k\ge K$, and adding
$\frac\nu4\norm{\bu^k-\bu_{\mathrm{GS}}}_2^2
\le\bE_h(\bu^k)-E_{\mathrm{GS}}\le\calL^k-E_{\mathrm{GS}}$ to
$\calL^k-E_{\mathrm{GS}}=\bE_h(\bu^k)-E_{\mathrm{GS}}+(R^k)^2/\gamma$
gives the second inequality in \eqref{n:geometric}.
\end{proof}

\section{Numerical Experiments}\label{sec:numerics}

In this section, we test two splitting schemes \eqref{dys} and \eqref{dys2} numerically, and compare them with five other efficient 
iterative methods: the $H^1$ gradient
flow and the Riemannian conjugate gradient method in the same $H^1$ metric,  the preconditioned
gradient (PG) and preconditioned conjugate gradient (PCG) methods in \cite{antoine2017efficient}, and the energy-adaptive $a_u$-flow \cite{henning2020sobolev}. Two spatial discretizations of
$\Omega=[-8,8]^3$ will be used. The first one is the second-order finite difference
scheme, satisfying Assumption~\ref{asp:struct}, on a uniform grid with $n^3$ interior
points, so that $\bbM=h^3\bbI$ with $h=16/(n+1)$, at $n=799$. The second discretization is the
$Q^k$ spectral element method with the diagonal
mass matrix of Gauss--Lobatto quadrature \cite{liu2023simple, chen2024fully}. 
In numerical tests below, we use the
$Q^{20}$ spectral element method on $50$ cells per direction, so that the
3D problem has $999^3$ unknowns. The $Q^{20}$ scheme is not monotone, so
these runs lie outside the hypotheses of Theorem~\ref{thm:main}. By considering such a discretization, we would like to show that the performance belongs to the splitting schemes and not uniquely to one
discretization.

In every experiment $\beta=1600$, and every method built from
$\bbG_\alpha:=(\alpha\bbI-\Delta_h)^{-1}$ uses the shift $\alpha=20$. All
methods solve their linear systems by the same simple
solver in \cite{liu2023simple,liu2026gpu}. All tests were run in Python with
JAX on one NVIDIA GH200 GPU card, in double precision.

\subsection{The methods}

\emph{$H^1$ flow with line search} is the discrete $H^1$ gradient flow
\cite{kazemi2010minimizing,danaila2010new,chen2024fully}, that is, Riemannian
gradient descent on $\calM$ for the shifted $H^1$ metric
$\langle\bv,\bw\rangle_\alpha:=\langle\bv,(\alpha\bbI-\Delta_h)\bw\rangle_h$.
The Riemannian gradient of $\bE_h$ at $\bu\in\calM$ for this metric is
\begin{equation}
\bg_\alpha(\bu)
:=\bbG_\alpha\nabla\bE_h(\bu)
-\frac{\langle\bbG_\alpha\nabla\bE_h(\bu),\bu\rangle_h}
{\langle\bbG_\alpha\bu,\bu\rangle_h}\,\bbG_\alpha\bu ,
\label{num:rgrad}
\end{equation}
the $\langle\cdot,\cdot\rangle_\alpha$-orthogonal projection of
$\bbG_\alpha\nabla\bE_h(\bu)$ onto the tangent plane of $\calM$ at $\bu$, and
one step is
\begin{equation}
\bu^{k+1}
=\frac{\bu^k-\tau_k\,\bg_\alpha(\bu^k)}
{\norm{\bu^k-\tau_k\,\bg_\alpha(\bu^k)}_2} ,
\label{num:rgf}
\end{equation}
which applies $\bbG_\alpha$ twice, to $\nabla\bE_h(\bu^k)$ and to $\bu^k$. The
step size $\tau_k$ is chosen at every iteration by minimizing the energy of the
normalized update.

\emph{Riemannian CG} is the Riemannian conjugate gradient method of Danaila and
Protas \cite{danaila2017computation} for the same metric: the search direction
at $\bu^k$ is $-\bg_\alpha(\bu^k)$ plus the previous direction, transported to
the tangent plane at $\bu^k$ by the projection in \eqref{num:rgrad} and
multiplied by the Fletcher--Reeves ratio
$\langle\bg_\alpha(\bu^k),\bg_\alpha(\bu^k)\rangle_\alpha/
\langle\bg_\alpha(\bu^{k-1}),\bg_\alpha(\bu^{k-1})\rangle_\alpha$, and the step
size comes from an Armijo search started at the one-dimensional minimizer.

\emph{Shifted Laplacian preconditioned GF} and \emph{CG} are the PG and PCG
methods of \cite{antoine2017efficient} with $\bbG_\alpha$ as the
preconditioner: the same shifted Laplacian that defines the metric above, used
as a preconditioner rather than as a metric, so the two families differ in how
they use the operator and not in which one they invert. Both take the projected
gradient $\bg(\bu^k)$
of \eqref{d2:grad} as the residual, with directions
$\bd^k=-\bbG_\alpha\,\bg(\bu^k)$ for PG and
\[
\bd^k=-\bbG_\alpha\,\bg(\bu^k)
+\frac{\langle\bg(\bu^k),\bbG_\alpha\bg(\bu^k)\rangle_h}
{\langle\bg(\bu^{k-1}),\bbG_\alpha\bg(\bu^{k-1})\rangle_h}\,
\bp^{k-1}
\]
for PCG. Both project the direction onto the tangent plane,
$\bp^k=\calP_{\bu^k}\bd^k$, and update along the great circle through $\bu^k$,
\begin{equation}
\bu^{k+1}=\cos(\theta_k)\,\bu^k
+\sin(\theta_k)\,\frac{\bp^k}{\norm{\bp^k}_2} ,
\label{num:pg}
\end{equation}
with $\theta_k$ minimizing $\bE_h(\bu^{k+1})$ over $[-\pi,\pi]$. Each iteration
applies $\bbG_\alpha$ once.

\emph{$a_u$-flow} is the energy-adaptive Riemannian gradient flow of
\cite{henning2020sobolev}, for the metric induced by $A_\bu$ itself,
$\langle\bv,\bw\rangle_{a_u}=\langle\bv,A_{\bu}\bw\rangle_h$.
Because $\nabla\bE_h(\bu)=A_{\bu}\bu$, applying
$A_{\bu}^{-1}$ to the gradient costs nothing, so one solve per
iteration suffices rather than two, as recorded in
Table~\ref{tab:cost}: with $\bw^k$ the solution of
$A_{\bu^k}\bw^k=\bu^k$ and
$\rho_k=\langle\bu^k,\bu^k\rangle_h/\langle\bu^k,\bw^k\rangle_h$,
the step is
\begin{equation}
\bu^{k+1}=\frac{\bu^k+\tau_k(\rho_k\bw^k-\bu^k)}
{\norm{\bu^k+\tau_k(\rho_k\bw^k-\bu^k)}_2},
\label{num:au}
\end{equation}
which at $\tau_k=1$ is a normalized inverse iteration in the $a_u$ metric.
The matrix $A_{\bu^k}$ moves with the iterate and is not separable, so this is
the one linear system in the comparison that cannot be diagonalized. It is
solved by conjugate gradients preconditioned by $(-\Delta_h+\bbV_1)^{-1}$ as in
\cite{liu2026gpu}, stopped at relative residual $10^{-8}$. The step size is
taken from \cite[Remark 4.3]{henning2020sobolev} rather than fixed at $1$:
along the update ray the energy of the normalized iterate is a ratio of
polynomials in $\tau$ whose eleven coefficients are inner products of $\bu^k$
and $\rho_k\bw^k$, gathered once per step, after which every trial $\tau$ is
a scalar evaluation and the minimization over $(0,2)$ needs no further solve.
Without this rule the $a_u$-flow would be the only method here carrying an
untuned step size: at $n=199$ it converges in $54$ iterations against $77$ at
$\tau_k=1$, and against $69$ at the best constant $\tau$ found by a sweep.

\emph{Davis--Yin I} is the scheme \eqref{dys}, whose single solve per iteration
inverts $\bbI-\gamma_k\Delta_h$, with
\begin{equation}
L_k:=\max_i\bigl(V_i+3\beta(u_i^k)^2\bigr),
\label{num:Lk}
\end{equation}
the Lipschitz constant at the current iterate of
$\nabla H$, where
$H(\bu)=\tfrac12\langle\bbV\bu,\bu\rangle_h+\tfrac\beta4\norm{\bu}_4^4$
is the explicitly treated part of the splitting in
Appendix~\ref{app:dys}, since
$\nabla^2H(\bu)=\mathrm{diag}\bigl(V_i+3\beta u_i^2\bigr)$. The choice
$\gamma_k=1/L_k$ is already larger than the threshold $\gamma_{\max}$
of Theorem~\ref{thm:main}, which is what the proof requires rather
than what the method tolerates. Even $1/L_k$ is conservative by about
a factor of two, so in practice we use $\gamma_k=c/L_k$. Different
constants give different
performance, and the values used here were found by tuning on coarse grids:
for the three examples of \S\ref{ssec:tests}, $c=1.6$ on the lattice from the
constant initial guess, $1.7$ on the lattice from the linear one and $2.1$ on
the stirrer. \emph{Davis--Yin II} is the scheme \eqref{dys2}, the same
iteration with $-\Delta_h+\bbV_1$ inside the resolvent and with
$L_k=\max_i(V_{2,i}+3\beta(u^k_i)^2)$. Its constants, tuned the same
way, are smaller, $1.4$ on the lattice from either initial guess and $1.2$ on
the stirrer, since a smaller $L_k$ comes with a tighter limit on $c$ and only
the product $\gamma_k=c/L_k$ matters.

Five of the seven methods choose their step size by a
one-dimensional minimization and the two Davis--Yin schemes do not, as
Table~\ref{tab:linesearch} records. Every such search is evaluated in closed
form, the energy along the search direction being a ratio of polynomials in the
step, so that no line search costs a linear solve. Table~\ref{tab:mem} records
what one iteration of each method costs, in elliptic solves and in memory.

\begin{table}[htbp]
\centering
\footnotesize
\setlength{\tabcolsep}{4pt}
\begin{tabular}{lll}
\hline
method & step size & one-dimensional minimization\\
\hline
$H^1$ flow with line search & $\tau_k$ minimizing $\bE_h$ in \eqref{num:rgf}
  & along the ray, over $\tau\in(0,5]$\\
Riemannian CG & Armijo from that minimizer
  & along the ray, then a backtrack\\
Shifted Laplacian preconditioned GF & $\theta_k$ minimizing $\bE_h$ in \eqref{num:pg}
  & along the great circle, $[-\pi,\pi]$\\
Shifted Laplacian preconditioned CG & as above
  & along the great circle\\
DYS I & $\gamma_k=c/L_k$, $L_k=\max_i(V_i+3\beta(u^k_i)^2)$
  & none\\
DYS II & $\gamma_k=c/L_k$, $L_k=\max_i(V_{2,i}+3\beta(u^k_i)^2)$
  & none\\
$a_u$-flow & $\tau_k$ minimizing $\bE_h$ in \eqref{num:au}
  & along the update ray, over $(0,2)$\\
\hline
\end{tabular}
\caption{The step size rules used in numerical tests.}
\label{tab:linesearch}
\end{table}

\begin{table}[htbp]
\centering
\footnotesize
\setlength{\tabcolsep}{4pt}
\begin{tabular}{lccc}
\hline
 & vectors carried & elliptic solves & memory cost per iteration of the\\
 & across an & per iteration, & simple efficient implementation\\
method & iteration & and of what & used in this paper, on a $999^3$ grid\\
\hline
DYS I \eqref{dys}
 & $2N$ & $1\times(\bbI-\gamma_k\Delta_h)^{-1}$ & $8\,n^3+2n^2$: \ \ $63.8$ GB\\
DYS II \eqref{dys2}
 & $2N$ & $1\times(\bbI-\gamma_k\Delta_h+\gamma_k\bbV_1)^{-1}$ & $8\,n^3+2n^2$: \ \ $63.8$ GB\\
Shifted Laplacian precond.\ CG
 & $2N$ & $1\times(\alpha\bbI-\Delta_h)^{-1}$ & $7\,n^3+2n^2$: \ \ $55.8$ GB\\
Shifted Laplacian precond.\ GF
 & $2N$ & $1\times(\alpha\bbI-\Delta_h)^{-1}$ & $9\,n^3+2n^2$: \ \ $71.8$ GB\\
$H^1$ flow with line search
 & $2N$ & $2\times(\alpha\bbI-\Delta_h)^{-1}$ & $9\,n^3+2n^2$: \ \ $71.8$ GB\\
Riemannian CG
 & $2N$ & $2\times(\alpha\bbI-\Delta_h)^{-1}$ & $8\,n^3+2n^2$: \ \ $63.8$ GB\\
$a_u$-flow
 & $1N$ & $1\times A_{\bu^k}^{-1}$ & $10\,n^3+2n^2$: \ $79.8$ GB\\
\hline
\end{tabular}
\caption{What one iteration costs, with $N=n^3$ degrees of freedom. The first
two columns belong to the schemes and hold for any implementation. The last is
what this one reaches, measured one method per process on one GH200 in double
precision.  The
$2n^2$ is the pair of $n\times n$ factors of the per-axis eigendecomposition, which is needed in the simple GPU implementation of Poisson solver \cite{liu2023simple, liu2026gpu}.}
\label{tab:mem}
\end{table}

\subsection{The two potentials and the three examples}\label{ssec:tests}

The figures below plot the energy error $|\bE_h(\bu^k)-E^*|$ against
the iteration count and against the wall-clock time. The reference energy $E^*$
is generated numerically on the same grid by running any one of the methods for
enough iterations, well past the accuracy that the compared runs reach, so that
no plotted curve is limited by it.

The first potential is an optical lattice superposed on a harmonic trap,
\begin{equation}
V_{\mathrm{lat}}(\bx)
=|\bx|^2+100\sum_{j=1}^{3}\sin^2\!\Bigl(\frac{\pi x_j}{4}\Bigr),
\label{num:Vlat}
\end{equation}
which is additively separable. The second is an anisotropic harmonic trap
stirred by a Gaussian beam directed along $x_3$,
\begin{equation}
V_{\mathrm{stir}}(\bx)
=x_1^2+x_2^2+\omega_z^2x_3^2
+2w_0\,e^{-\delta\left((x_1-r_0)^2+x_2^2\right)},
\qquad w_0=30,\ \delta=1,\ r_0=1,\ \omega_z=2,
\label{num:Vstir}
\end{equation}
a standard model of a laser stirrer in a condensate
\cite{bao2004computing}. The beam couples $x_1$ and
$x_2$, so $V_{\mathrm{stir}}$ is not separable. Writing $V=V_1+V_2$ with $V_1$
the separable part, the Gaussian $V_2$ is a rank-one outer product in
$(x_1,x_2)$. The scheme \eqref{dys} treats $V$ explicitly and is indifferent to
this distinction. The scheme \eqref{dys2} and the $a_u$-flow are not, the
first inverting $-\Delta_h+\bbV_1$ and the second using it as a
preconditioner. Both need $V_1$ to be separable, and
$-\Delta_h+\bbV_1$ is then inverted by the fast diagonalization of
\cite{liu2026gpu}.
Figure~\ref{fig:states} shows both traps and the ground states they support.

\begin{figure}[htbp]
    \centering
    \includegraphics[width=0.85\linewidth]{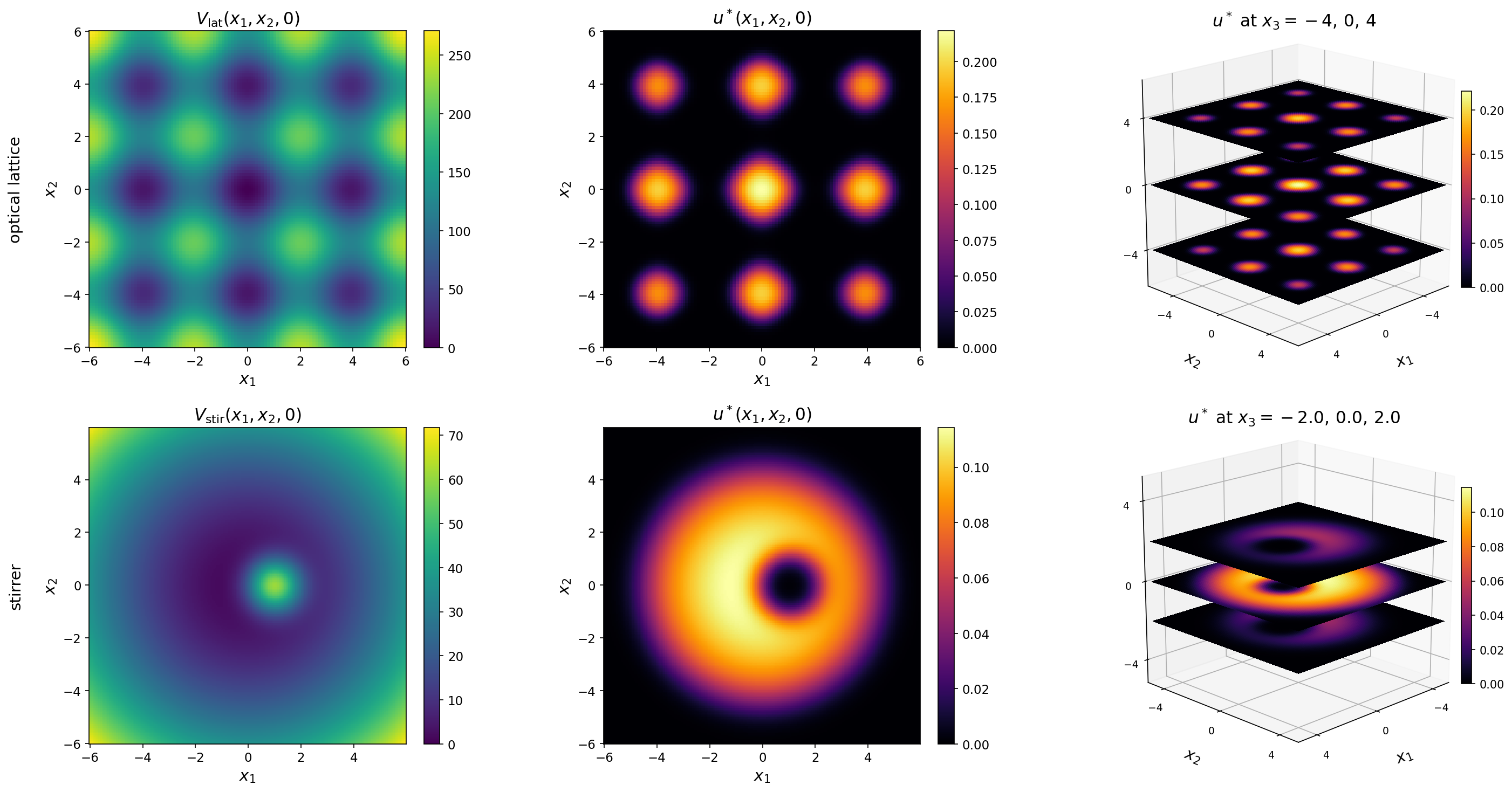}
    \caption{The two potentials and their ground states: the optical lattice
    \eqref{num:Vlat} on top, the stirrer \eqref{num:Vstir} below. Left is the
    trap on the plane $x_3=0$, middle the ground state on the same plane, right
    the ground state on three planes.}
    \label{fig:states}
\end{figure}

\begin{example}[Optical lattice, constant initial guess]\label{ex:lattice}
The potential is \eqref{num:Vlat} and every method starts from the normalized
constant vector, which is positive as Theorem~\ref{thm:main} requires.
Figure~\ref{fig:ex1} reports the comparison, at $n=799$ with finite differences
and at $n=999$ with spectral elements. The practical computational
efficiency of the two Davis--Yin splitting schemes is comparable to that of the
CG methods, even though they need more iterations.
\end{example}

\begin{figure}[htbp]
    \centering
    \includegraphics[width=0.85\linewidth]{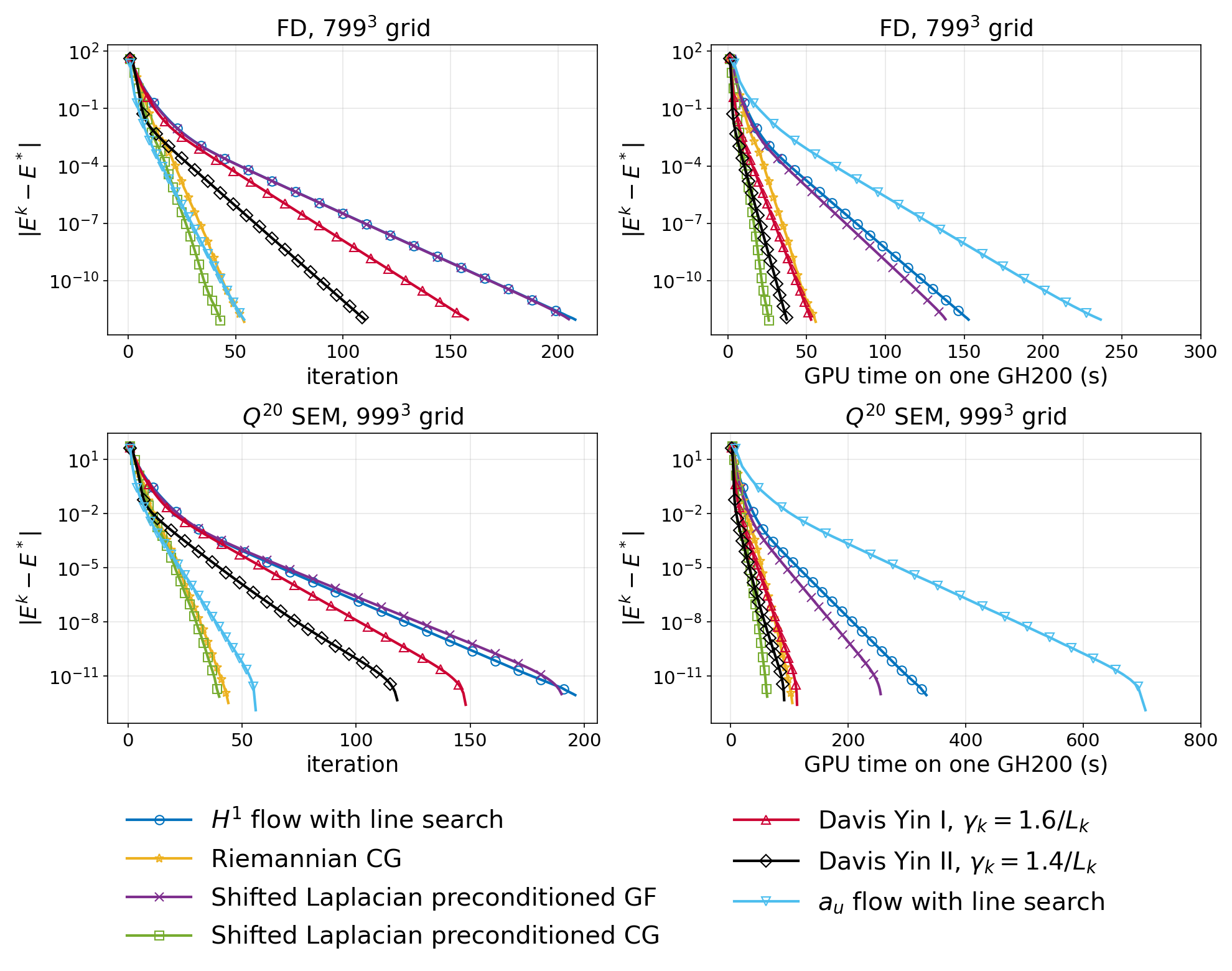}
    \caption{Example~\ref{ex:lattice}, the optical lattice from the constant
    initial guess: second-order finite differences (FD) on a $799^3$
    grid on top, $Q^{20}$ spectral elements (SEM) on a $999^3$
    grid below.}
    \label{fig:ex1}
\end{figure}

\begin{example}[Stirrer, constant initial guess]\label{ex:stirrer}
The potential is \eqref{num:Vstir}, which is not separable, and the initial
guess is again the normalized constant vector. Figure~\ref{fig:ex2} reports the
comparison on the same two grids. The stirring beam is explicit in
\eqref{dys}, and its peak sets $L_k$ and with it the step size, so here
\eqref{dys} is slower than the CG methods. The scheme \eqref{dys2} keeps only
the beam explicit, and is the fastest of the seven methods in wall-clock time.
\end{example}

\begin{figure}[htbp]
    \centering
    \includegraphics[width=0.85\linewidth]{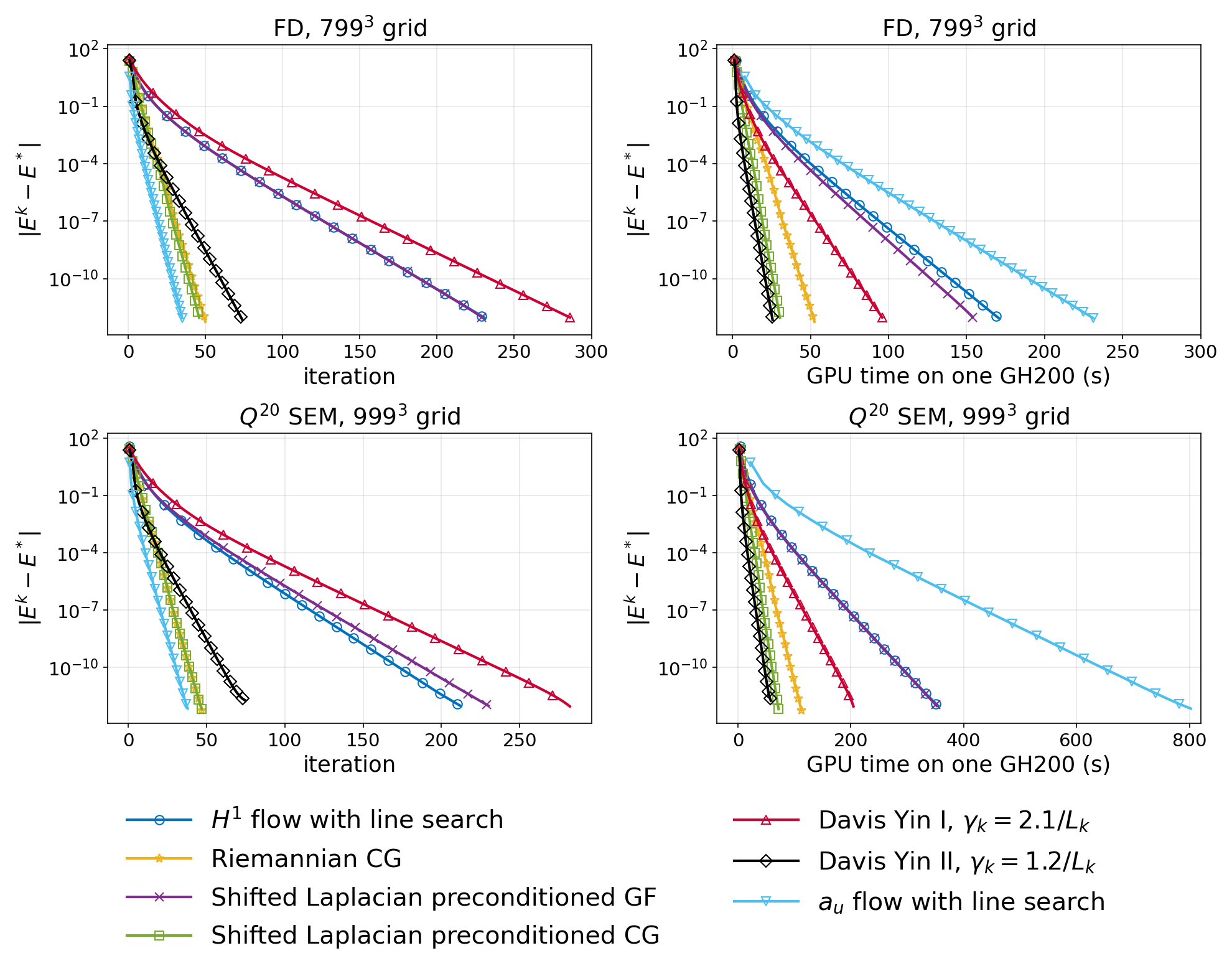}
    \caption{Example~\ref{ex:stirrer}, the stirrer potential, in the panels
    of Figure~\ref{fig:ex1}. DYS~II is the fastest of the seven
    methods here.}
    \label{fig:ex2}
\end{figure}

\begin{example}[Optical lattice, linear ground state initial
guess]\label{ex:linear}
The potential is \eqref{num:Vlat} and every method starts from the ground state
of the linear problem, the eigenvector of $-\Delta_h+\bbV$ belonging to its
smallest eigenvalue, computed by shifted inverse iteration. This is a natural
and widely used initialization for smaller $\beta$, but at $\beta=1600$ its energy exceeds $E^*$ by
a factor of about sixteen. Figure~\ref{fig:ex3} shows the initial guess and the
comparison. Unlike the conjugate gradient methods, which are sensitive to this poor initial guess and
 stall within the first few hundred iterations at states
that are not the ground state, the schemes \eqref{dys} and \eqref{dys2}
converge very efficiently.
\end{example}

\begin{figure}[htbp]
    \centering
    \subfigure[The linear ground state ($\beta=0$), and the ground state for
    $\beta=1600$ for the lattice potential.]{%
        \includegraphics[width=0.85\linewidth]{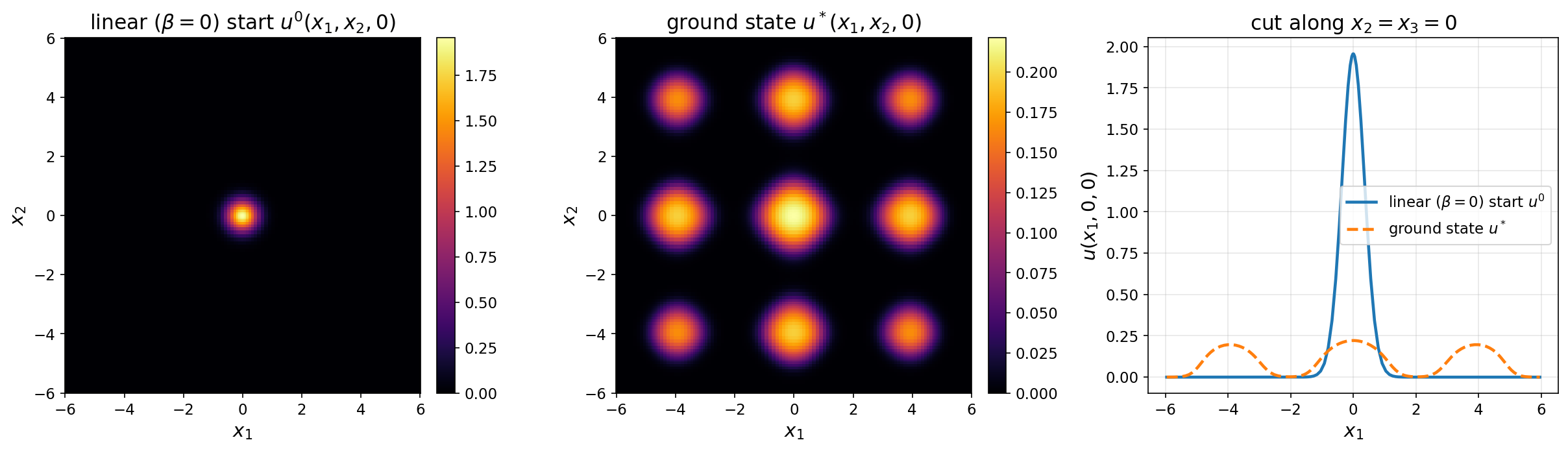}
        \label{fig:ex3init}}\\[6pt]
    \subfigure[The comparison for second order finite difference on a
    $799^3$ grid.]{%
        \includegraphics[width=0.85\linewidth]{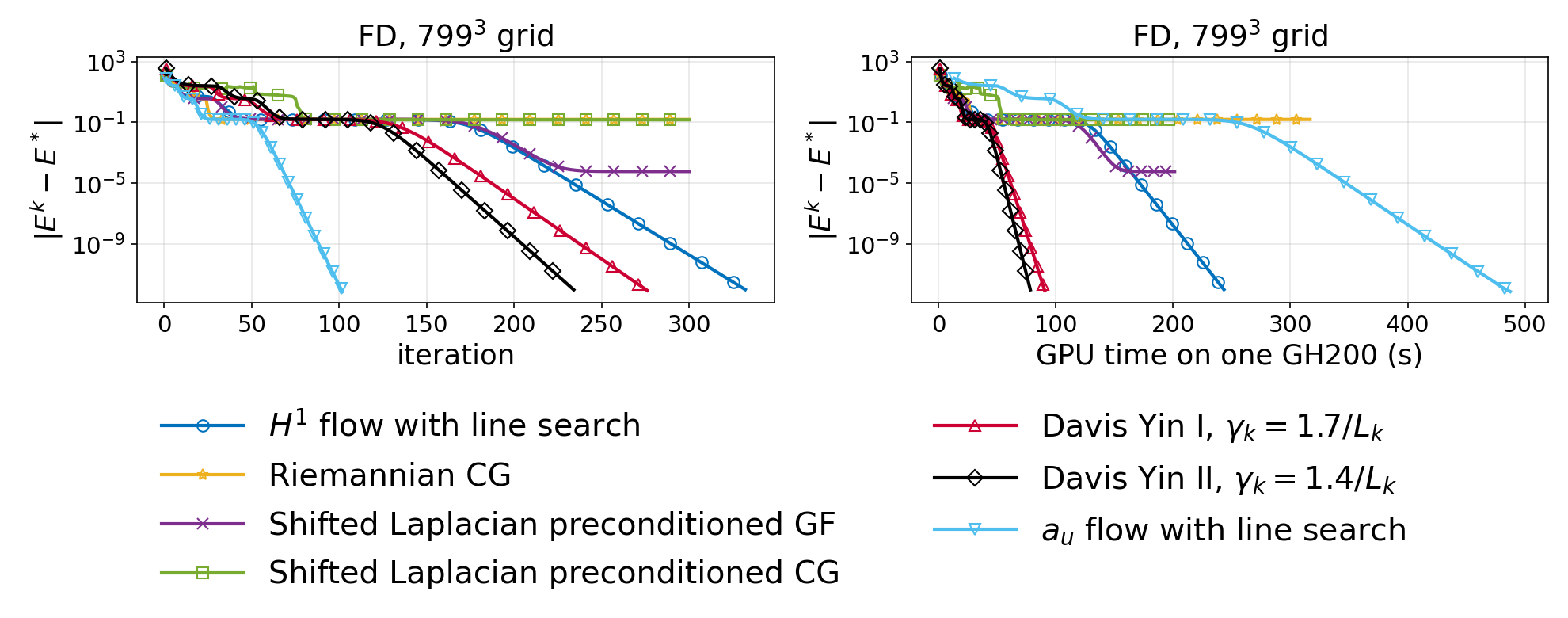}
        \label{fig:ex3n799}}
    \caption{At $\beta=1600$ the ground state of the linear problem is a poor
    initial guess, which is used in Example~\ref{ex:linear} only to test how
    much methods depend on a good initial guess. The CG methods
    perform poorly, while the DYS schemes are superior in computational
    time.}
    \label{fig:ex3}
\end{figure}

\section{Concluding remarks}\label{sec:conclusion}

We  have proved global convergence of the semi-implicit Davis--Yin
splitting scheme \eqref{dys} for the discrete defocusing
Gross--Pitaevskii ground state problem: for the monotone
discretizations of Section~\ref{sec:fem}, for every positive
normalized initial vector, and for every constant step size below
the explicit threshold of Theorem~\ref{thm:main}, the normalized
iterates converge to the unique positive discrete ground state,
with no spectral gap assumption and no requirement that the
initial guess be close to it. In the numerical tests of
Section~\ref{sec:numerics}, the simple step size rule
$\gamma_k=c/L_k$ of \eqref{num:Lk}, which costs no line search
and no inner iteration, makes the schemes \eqref{dys} and
\eqref{dys2} efficient in practice: on the examples where every
method converges, their wall-clock performance is comparable to
that of the conjugate gradient methods, and they are much more
robust with respect to the choice of the initial guess.  

\appendix

\section{The monotone spatial discretizations}
\label{app:fem}

This appendix collects the construction of the two discretizations of
Section~\ref{sec:fem} and the monotonicity properties they enjoy,
following \cite{chen2024fully}.

\subsection{Finite element method with quadrature}
Let $\Omega_h$ be a mesh of $\Omega$, which is either
\begin{enumerate}
\item a uniform rectangular mesh, on which we use the space of
continuous piecewise $Q^1$ (multilinear) polynomials, or
\item an unstructured simplicial mesh, e.g., a triangular mesh for
$d=2$, on which we use the space of continuous piecewise $P^1$
(linear) polynomials.
\end{enumerate}
In both cases the finite element space with the homogeneous
Dirichlet boundary condition is
\[
V_0^h=\{v_h\in C(\Omega):\ v_h|_{\partial\Omega}=0,\
v_h|_e\in Q^1(e)\ \text{(resp.\ }P^1(e)\text{)},\
\forall\,e\in\Omega_h\}\subset H_0^1(\Omega).
\]
In practice the integrals defining the scheme are evaluated by
quadrature. For the $Q^1$ method we use the $2$-point Gauss--Lobatto
(i.e., trapezoidal) rule in each dimension on each cell, and for the
$P^1$ method the vertex rule
$\int_e f\,\mathrm{d}\bx\approx\frac{|e|}{d+1}\sum_{\bx_v\in e}
f(\bx_v)$ on each simplex, i.e., mass lumping. (For $P^1$ elements
the stiffness integrand $\nabla u_h\cdot\nabla v_h$ is piecewise
constant, so it is integrated exactly, and only the mass and
potential terms are lumped.) Let $\langle\cdot,\cdot\rangle$ denote the
resulting quadrature approximation of $(\cdot,\cdot)$. The finite
element method with quadrature for \eqref{variational} is to seek
$\lambda_h\in\bR$ and $u_h\in V_0^h$ satisfying
\begin{equation}
\langle\nabla u_h,\nabla v_h\rangle+\langle Vu_h,v_h\rangle
+\beta\langle|u_h|^2u_h,v_h\rangle
=\lambda_h\langle u_h,v_h\rangle,
\qquad\forall\,v_h\in V_0^h,
\label{fem2}
\end{equation}
with the corresponding discrete energy
\begin{equation}
E_h(u_h)=\frac12\langle\nabla u_h,\nabla u_h\rangle
+\frac12\langle Vu_h,u_h\rangle
+\frac\beta4\langle u_h^2,u_h^2\rangle,
\label{fem-energy}
\end{equation}
minimized under the normalization constraint
$\langle u_h,u_h\rangle=1$. On a uniform rectangular mesh the $Q^1$
scheme with quadrature coincides with the classical second-order
central finite difference scheme, see
\cite[Appendix]{chen2024fully} for the explicit expressions of both
schemes. Using such quadrature does not degrade the standard
\emph{a priori} error estimates of the finite element method
\cite{ciarletbook}. For the convergence of finite element
approximations of the nonlinear eigenvalue problem
\eqref{continuum}, see \cite{cances2010numerical,chen2024fully}.

\subsection{Matrix--vector form}
With the notation of Section~\ref{sec:fem}, write $u_i=u_h(\bx_i)$
and $u_h=\sum_{i=1}^Nu_i\phi_i$ for $u_h\in V_0^h$. Then
\begin{equation}
\langle Vu_h,v_h\rangle=\sum_{i=1}^Nw_iV_iu_iv_i
=\bv^\top\bbM\bbV\bu,
\qquad
\langle\nabla u_h,\nabla v_h\rangle=\bv^\top\bbS\bu ,
\label{part12}
\end{equation}
so that \eqref{fem2} is exactly \eqref{fd3} and \eqref{fem-energy} is
exactly \eqref{fd-energy}. The stiffness matrix $\bbS$ is symmetric
positive definite, and $\bbM$ is diagonal with positive entries.

\subsection{Monotonicity}
A matrix $A$ is called \emph{monotone} if $A^{-1}\ge0$ entrywise. A
symmetric positive definite matrix with nonpositive off-diagonal
entries is a (symmetric) M-matrix, also called a Stieltjes matrix,
and is monotone, see \cite{varga1999matrix,plemmons1977m}. Both
discretizations above are monotone in the following sense.
\begin{enumerate}
\item For the $Q^1$ scheme on a uniform rectangular mesh
(equivalently, the second-order finite difference scheme), the
stiffness matrix $\bbS$ has nonpositive off-diagonal entries, which
is classical.
\item For the $P^1$ scheme on a simplicial mesh
$\Omega_h\subset\bR^d$, let $E$ denote an interior edge connecting
the vertices $\bx_i$ and $\bx_j$, let $\kappa_E^T$ be the
$(d-2)$-dimensional simplex opposite to $E$ in a simplex
$T\supset E$, and let $\theta_E^T$ be the dihedral angle between
the two faces of $T$ containing $E$. Then
$\bbS_{ij}=-\sum_{T\supset E}\frac1{d(d-1)}|\kappa_E^T|
\cot\theta_E^T$, so the off-diagonal entries of $\bbS$ are
nonpositive if and only if the mesh satisfies
\begin{equation}
\sum_{T\supset E}\frac1{d(d-1)}|\kappa_E^T|\cot\theta_E^T\ \ge\ 0
\qquad\text{for every interior edge }E,
\label{simpicialmesh}
\end{equation}
see \cite[Lemma~2.1]{xu1999monotone}. For $d=2$ the condition
\eqref{simpicialmesh} reduces to
$\cot\theta_E^{T_1}+\cot\theta_E^{T_2}\ge0$, i.e.,
$\theta_E^{T_1}+\theta_E^{T_2}\le\pi$ for the two angles opposite
to each interior edge: this is exactly the defining property of a
\emph{Delaunay triangulation}, which is more general and more
practical than a nonobtuse triangulation. A mesh in which every
simplex is nonobtuse satisfies \eqref{simpicialmesh} in any
dimension.
\end{enumerate}
In both cases the adjacency graph of $\bbS$ contains the
edge--vertex connectivity graph of the mesh, and we assume it is
connected, i.e., $\bbS$ is \emph{irreducible}. This holds
automatically for the finite difference scheme, and for the $P^1$
scheme whenever \eqref{simpicialmesh} holds strictly on enough
edges, e.g., for a strictly Delaunay triangulation, see
\cite{li2019monotonicity}. This is the connectivity required in
Assumption~\ref{asp:struct}, and with it items (i) and (ii) of
Lemma~\ref{lem:disc} hold for both schemes.

Consequently $\bbM A_{\bu}=\bbS+\bbM\bbV+\beta\bbM\diag(\bu^2)$ is an
irreducible symmetric M-matrix for every $\bu$, so $A_{\bu}$ is
monotone and, by the Perron--Frobenius theorem, the smallest
eigenvalue of $A_{\bu}$ is simple and its eigenvector is the only
nonnegative one, up to normalization
\cite[Section~3]{chen2024fully}. This yields the two facts quoted at
the end of Section~\ref{sec:fem}. First, as shown in
\cite[Section~3]{chen2024fully}, the discrete energy inherits the
hidden convexity of the continuous problem, being convex as a
function of the discrete density, so that \eqref{gs-h} has a
minimizer that is unique up to sign, and its positive representative
is the discrete ground state $\bu_{\mathrm{GS}}$. Second, if
$\bu\ge0$ is a critical point of \eqref{gs-h}, then $A_{\bu}\bu
=\lambda_h\bu$ with $\bu\ge0$ and $\bu\neq0$, so by
Perron--Frobenius $\bu$ must be the eigenvector belonging to the
smallest eigenvalue of $A_{\bu}$. In particular $\bu>0$, and the
convexity in the density variables then forces
$\bu=\bu_{\mathrm{GS}}$. The same conclusions were obtained for
mass-lumped $P^1$ finite elements on unstructured meshes in
\cite{hauck2024positivity} by the inverse positivity of irreducible
M-matrices and a discrete Picone inequality, avoiding the explicit
use of the Perron--Frobenius theorem.

\section{The Davis--Yin splitting}
\label{app:dys}

This appendix recalls the Davis--Yin splitting and
derives the two schemes \eqref{dys} and \eqref{dys2} of
Section~\ref{sec:scheme}. Throughout, the
proximal operator of a function $\varphi$ with parameter $\gamma>0$
is taken with respect to the discrete inner product
\eqref{discreteL2norm},
\begin{equation}
\mathrm{prox}_\varphi^\gamma(\bv)
:=\operatorname{argmin}_{\bw\in\bR^N}\ \varphi(\bw)
+\frac1{2\gamma}\norm{\bw-\bv}_2^2 .
\label{prox}
\end{equation}

\subsection{The three-term splitting}
The discrete ground state problem \eqref{gs-h} is the unconstrained
minimization
\begin{equation}
\min_{\bu\in\bR^N}\ F(\bu)+G(\bu)+H(\bu),
\label{threeterm}
\end{equation}
of the three functions
\begin{equation}
F(\bu):=\tfrac12\langle-\Delta_h\bu,\bu\rangle_h,
\qquad
G(\bu):=\iota_{\mathcal M}(\bu),
\qquad
H(\bu):=\tfrac12\langle\bbV\bu,\bu\rangle_h
+\tfrac\beta4\norm{\bu}_4^4 ,
\label{fgh}
\end{equation}
where $F$ is the discrete Dirichlet (kinetic) energy, $H$ collects
the potential and interaction energies, so that $F+H=\bE_h$ is the
discrete energy \eqref{fd-energy}, and $\iota_{\mathcal M}$ is the
indicator function of $\mathcal M$, equal to $0$ on $\mathcal M$ and
$+\infty$ elsewhere, which enforces the normalization constraint.

For $F$, the first-order condition for the minimization in
\eqref{prox} is $\gamma(-\Delta_h\bw)+\bw-\bv=0$, so that
\begin{equation}
\mathrm{prox}_F^\gamma(\bv)=(\bbI-\gamma\Delta_h)^{-1}\bv
=(\bbM+\gamma\bbS)^{-1}\bbM\bv ,
\label{proxf}
\end{equation}
i.e., one solve of a linear system with the symmetric positive
definite matrix $\bbM+\gamma\bbS$. For $G$, the proximal operator of
the indicator function of a set is the projection onto that set, and
the projection onto $\mathcal M$ is the normalization
\begin{equation}
\mathrm{prox}_G^\gamma(\bv)=\frac{\bv}{\norm{\bv}_2},
\qquad \bv\neq0,
\label{proxg}
\end{equation}
which is independent of $\gamma$ and costs no linear solve. Finally
$H$ is smooth, with
\begin{equation}
\nabla H(\bu)=\bbV\bu+\beta\bu^3 ,
\label{gradh}
\end{equation}
evaluated entrywise at no cost.

\subsection{The Davis--Yin iteration}
For a sum of three functions as in \eqref{threeterm}, in which two
terms are handled by their proximal operators and the third by its
gradient, the Davis--Yin splitting \cite{davis2017three} reads
\begin{equation}
\begin{cases}
\bu^k=\mathrm{prox}_G^\gamma(\bz^k),\\[2pt]
\bxit^k=\mathrm{prox}_F^\gamma
\bigl(2\bu^k-\bz^k-\gamma\nabla H(\bu^k)\bigr),\\[2pt]
\bz^{k+1}=\bz^k+\bxit^k-\bu^k .
\end{cases}
\label{DYS}
\end{equation}
It contains two classical schemes as special cases: for $H\equiv0$ it
reduces to the Douglas--Rachford splitting, and for $G\equiv0$ to the
forward--backward splitting, which is why it is also known as the
forward--Douglas--Rachford splitting \cite{raguet2019note}. For
\emph{convex} $F,G,H$ with $\nabla H$ Lipschitz continuous with
constant $L$, the iteration \eqref{DYS} converges for any step size
$\gamma<2/L$ \cite{davis2017three}. Substituting
\eqref{proxf}--\eqref{gradh} into \eqref{DYS} gives the scheme
\eqref{dys} analyzed in this paper.

An alternative splitting moves part of the potential into $F$.
Write $V=V_1+V_2$ with $V_1,V_2\ge0$, and let $\bbV_1$ and
$\bbV_2$ be the corresponding diagonal matrices, so that
$\bbV=\bbV_1+\bbV_2$. In place of \eqref{fgh} take
\begin{equation}
F(\bu):=\tfrac12\langle(-\Delta_h+\bbV_1)\bu,\bu\rangle_h,
\qquad
G(\bu):=\iota_{\mathcal M}(\bu),
\qquad
H(\bu):=\tfrac12\langle\bbV_2\bu,\bu\rangle_h
+\tfrac\beta4\norm{\bu}_4^4 ,
\label{fgh2}
\end{equation}
so that again $F+H=\bE_h$. The first-order condition for the
minimization in \eqref{prox} now reads
$\gamma(-\Delta_h+\bbV_1)\bw+\bw-\bv=0$, so
\begin{equation}
\mathrm{prox}_F^\gamma(\bv)
=(\bbI-\gamma\Delta_h+\gamma\bbV_1)^{-1}\bv
=(\bbM+\gamma\bbS+\gamma\bbM\bbV_1)^{-1}\bbM\bv ,
\label{proxf2}
\end{equation}
while $\mathrm{prox}_G^\gamma$ is unchanged and
$\nabla H(\bu)=\bbV_2\bu+\beta\bu^3$. Substituting these into the
iteration \eqref{DYS}  gives the scheme \eqref{dys2}. The matrix $\bbM+\gamma\bbS+\gamma\bbM\bbV_1$ differs
from $\bbM+\gamma\bbS$ by a nonnegative diagonal matrix, so it is
still an irreducible Stieltjes matrix and its inverse has strictly
positive entries, by the argument of Lemma~\ref{lem:disc}(ii).
 
\section*{Acknowledgments}

This work used DeltaAI at the National Center for Supercomputing
Applications (NCSA) through allocation MTH260013 from the Advanced
Cyberinfrastructure Coordination Ecosystem: Services \& Support
(ACCESS) program, which is supported by U.S.\ National Science
Foundation grants \#2138259, \#2138286, \#2138307, \#2137603,
and \#2138296.

\section*{Declaration of AI-assisted technologies}

The authors have used generative AI tools such as ChatGPT and Claude
to assist mathematical discussions, algorithm implementations and drafting the manuscript. After
using AI tools, the authors have carefully reviewed, edited and
polished the manuscript, and take full responsibility for the
content.

\bibliographystyle{amsplainrepeat}
\bibliography{references}

\end{document}